\newtheorem{theorem}{{\sc Theorem}}[section]
\newtheorem{proposition}[theorem]{{\sc Proposition}}
\newtheorem{lemma}[theorem]{{\sc Lemma}}
\newtheorem{corollary}[theorem]{Corollary}
\newtheorem{remark}[theorem]{Remark}
\newtheorem{definition}[theorem]{Definition}
\newtheorem{conjecture}[theorem]{Conjecture}
\newcommand\restr[2]{{ \left.\kern-\nulldelimiterspace #1 \vphantom{\big|} \right|_{#2}}}
\newcommand{\RR}{\mathbb{R}}
\newcommand{\ZZ}{\mathbb{Z}}
\newcommand{\CC}{\mathbb{C}}
\newcommand{\CD}{\mathcal{D}}
\newcommand{\veps}{\varepsilon}
\newcommand{\CL}{\mathcal{L}}
\newcommand{\n}{\noindent}
\newcommand{\comment}[1]{}
\renewcommand{\div}{\mathrm{div}}
\DeclareMathAlphabet{\pazocal}{OMS}{zplm}{m}{n}
\newcommand{\CT}{\pazocal{T}}
\renewcommand{\CD}{\pazocal{D}}
\newcommand{\CB}{\pazocal{B}}
\newcommand{\CG}{\pazocal{G}}
\title{On the lack of external response of a nonlinear medium in the second-harmonic generation process}
\author{Fioralba Cakoni\footnote{Department of Mathematics, Rutgers University, New Brunswick, NJ, USA (fc292@math.rutgers.edu)},  \ \ Narek Hovsepyan\footnote{Department of Mathematics, Rutgers University, New Brunswick, NJ, USA (nh507@rutgers.edu)}, \\ \\ Matti Lassas\footnote{Department of Mathematics and Statistics, University of Helsinki, Finland (matti.lassas@helsinki.fi)} \ \ and \ \ Michael S. Vogelius\footnote{Department of Mathematics, Rutgers University, New Brunswick, NJ, USA (vogelius@math.rutgers.edu)}}
\date{}
\begin{document}
\maketitle

\begin{abstract}
This paper concerns the scattering problem for a  nonlinear medium of compact support, $D$, with second-harmonic generation. Such a medium, when probed with monochromatic light beams at frequency $\omega$, generates additional waves  at  frequency $2\omega$. The response of the medium is governed by a system of two coupled semilinear partial differential equations for the electric fields  at frequency  $\omega$ and $2\omega$.  We investigate whether there are situations in which the generated $2\omega$ wave is localized inside $D$, that is,  the nonlinear interaction of the medium with the probing wave is invisible to an outside observer. This leads to the analysis of a semilinear elliptic system formulated in $D$ with non-standard boundary conditions. The analysis presented here sets up a mathematical framework needed to investigate a multitude of questions related to nonlinear scattering with second-harmonic generation. 
\end{abstract}

\section{Introduction}
\setcounter{equation}{0}
The focus of this paper is a nonlinear model describing higher order optical harmonics  generation in bulk crystals \cite{boyd, mol}. Such media, when probed with monochromatic laser beams, generate waves at new frequencies. The most common example is perhaps the green laser pointer, which emits a frequency-doubled green light based on an infrared laser source and an internal (second-harmonic generating) crystal. In our model the light propagation through the medium is governed by a system of monochromatic Maxwell's equations at different frequencies, which are coupled through sources depending nonlinearly on the time-harmonic electric fields at different frequencies.  To make this more precise: Maxwell's equations,  in terms of the electric field $E$ for a non-magnetic medium (i.e., $B=H$) with no free charge and no current, assert (cf. \cite{boyd})
\begin{equation*}
\nabla (\div E) - \Delta E = -\mu_0 \veps_0 E_{tt} - \mu_0 P_{tt}.
\end{equation*}

\n In linear optics of isotropic source-free media, the polarization relation $P=\veps_0 \chi E$  implies that $\div E = 0$. In nonlinear optics, this linear relation between $P$ and $E$ does not hold, nevertheless, it is still commonly  assumed that $\div E \approx 0$. According to \cite{boyd} this is true in many cases of interest, e.g., in the context of the slowly varying amplitude approximation. This justifies considering the nonlinear vector wave equation 

\begin{equation} \label{wave}
\Delta E - \frac{1}{c^2}  E_{tt} = \mu_0 P_{tt}, 
\end{equation}
where  $c=(\mu_0 \veps_0)^{-1/2}$ is the speed of light, and $\veps_0$ and $\mu_0$ are the vacuum electric permittivity and the magnetic permeability, respectively. We split the polarization into its linear and nonlinear contributions $P = P^L + P^{NL}$  in terms of dependence on $E$.  In our model, we assume that $E,P$ can be decomposed into sums of discrete frequency components (we use $\hat{\cdot}$ to denote a quantity in the frequency domain)
$$E(t,x) = \sum_{n\in \ZZ} \hat{E}(x;\omega_n) e^{-i\omega_n t} \qquad \qquad  P^{L (NL)}(t,x) = \sum_{n\in \ZZ}  \hat{P}^{L (NL)} (x;\omega_n)e^{-i\omega_n t}$$
For the linear polarization the usual constitutive relation of linear media 
$$\hat{P}^{L} (x; \omega_n) = \veps_0 \chi^{(1)}(x;\omega_n) \hat{E}(x;\omega_n)$$ holds, where the linear susceptibility $\chi^{(1)}$ is in general  a matrix. Substituting these expressions into \eqref{wave} we arrive at

\begin{equation} \label{E_n PDE}
\Delta \hat{E}(x;\omega_n)+ \frac{\omega_n^2}{c^2} \veps(x,\omega_n)\hat{E}(x;\omega_n) = -\mu_0 \omega_n^2 \hat{P} (x;\omega_n)
\end{equation}
where $\veps =  \text{Id} + \chi^{(1)}$ is the relative permittivity matrix for the inhomogeneous media and $\hat{P} (x;\omega_n):=\hat{P}^{NL}(x;\omega_n)$ now refers to the nonlinear polarization.  
\begin{remark}
\normalfont
The physical optical field, $E$, is real, and we consider waves with positive frequencies $\omega_n> 0$, $n=1,2,...$. Complexifying we can write $E = \sum_{n> 0} E_n e^{-i\omega_n t} + E_n^* e^{i\omega_n t}$, where $\cdot^*$ denotes the complex conjugate. If we introduce $\omega_{-n}=-\omega_n$, the relevant expansions for $E$ therefore take the form
\begin{equation*}
E = \sum_{n\in \ZZ}  \hat{E}(x;\omega_n) e^{-i\omega_n t},
\end{equation*}
where $\hat{E}(x;0)=0$, the frequencies appear together with their negatives, and the amplitudes obey the symmetry 
\begin{equation} \label{sym}
\hat{E}(\cdot\,;-\omega_n) = \hat{E}^* (\cdot \, ;\omega_n)~.
\end{equation}
\end{remark}

\bigskip

\n In this work we limit ourselves to  second order nonlinear effects, which means that the nonlinear polarization $\hat{P}^{NL}$  depends on $\hat{E}$ quadratically, and more specifically we consider  the sum frequency generation process. In other words , we model the interaction of three waves with frequencies linked through the equation $\omega_3 = \omega_1 + \omega_2$ using a second order nonlinearity. Initially we assume that the two input frequencies are different, $\omega_1 \neq \omega_2$, and that all frequencies are nonzero. The interaction is quantified by the second order susceptibility tensor $\chi^{(2)}$. We define the components of the second order susceptibility tensor $\chi^{(2)}_{ijk}$ as the proportionality constants in the formula

\begin{equation} \label{chi^2 sum permutation}
\hat{P}_i(\omega_3) = \veps_0 \sum_{jk} \sum_{(12)} \chi^{(2)}_{ijk}(\omega_3; \omega_1, \omega_2) \hat{E}_j(\omega_1) \hat{E}_k(\omega_2), 
\end{equation}

\n where $i=1,2,3$ indicates the component of the vector $\hat{P}(\omega_3)$, and similarly for the indices $j,k$ and the fields $\hat{E}(\omega_1),~ \hat{E}(\omega_2)$. Note that here for simplicity we have suppressed the $x$-dependence in the fields. The first argument of $\chi^{(2)}$ is redundant, since it is always the sum of the second and third arguments, but by convention it is still listed \cite{boyd}. Since $\hat{E}_j(\omega_1)$ is associated with the time dependence $e^{-i\omega_1 t}$ and $\hat{E}_k(\omega_2)$ with $e^{-i\omega_2 t}$, their product is associated with $e^{-i\omega_3 t}$, which explains the notation. The second summation over $(12)$, indicates that $\omega_3$ is held fixed and the sum is over the two distinct permutations of $\omega_1, \omega_2$, leading to the formula

\begin{equation}\label{in}
\hat{P}_i(\omega_3) = \veps_0 \sum_{jk} \left[ \chi^{(2)}_{ijk}(\omega_3; \omega_1, \omega_2) + \chi^{(2)}_{ikj}(\omega_3; \omega_2, \omega_1)\right] \hat{E}_j(\omega_1) \hat{E}_k(\omega_2). 
\end{equation}
\n Using the convention known as intrinsic permutation symmetry, we assume that
\begin{equation} \label{intrinsic sym}
\chi^{(2)}_{ijk}(\omega_3; \omega_1, \omega_2) = \chi^{(2)}_{ikj}(\omega_3; \omega_2, \omega_1),
\end{equation}
\n which is motivated by convenience and the fact that only the sum of the two coefficients can be measured in practice, so we may as well assume that the individual pieces are the same. Finally, the symmetry \eqref{sym} for $\hat{E}$ and hence also for $\hat{P}$ implies

\begin{equation} \label{real sym}
\chi^{(2)}_{ijk}(-\omega_3; -\omega_1, -\omega_2) = (\chi^{(2)}_{ijk})^*(\omega_3; \omega_1, \omega_2).
\end{equation}
\n With this in mind, we can now write (\ref{in}) in a more compact tensor notation
\begin{equation*}
\hat{P}(\omega_3) = 2\veps_0 \chi^{(2)}(\omega_3; \omega_1, \omega_2) \hat{E}(\omega_1) \hat{E}(\omega_2).  
\end{equation*}

\n To have a complete description of interaction of the three involved waves we also need the polarizations $\hat{P}(\omega_1)$ and $\hat{P}(\omega_2)$. These are given by analogous formulas, for example since $\omega_1 = \omega_3 - \omega_2$, hence
$$\hat{P}(\omega_1) = 2\veps_0 \chi^{(2)}(\omega_1; \omega_3, -\omega_2) \hat{E}(\omega_3) \hat{E}^*(\omega_2),$$
and similarly
$$\hat{P}(\omega_2) = 2\veps_0 \chi^{(2)}(\omega_2; \omega_3, -\omega_1) \hat{E}(\omega_3) \hat{E}^*(\omega_1).$$

\n Using the above constitutive laws for polarizations in the PDEs \eqref{E_n PDE}, we obtain the governing non-linear system that describes the interaction between the three waves  $\hat{E}(\cdot\,;\omega_1)$, $\hat{E}(\cdot\,;\omega_2)$ and $\hat{E}(\cdot\,;\omega_3)$ in  the  sum frequency generation model.
\begin{remark} \mbox{}
\normalfont
\n In the governing system the three  tensors $\chi^{(2)}$, which can be different and $x$-dependent, must be specified. Each tensor contains $27$ complex constants. So to describe the nonlinear interaction of three waves, 81 constants must be specified. In practice, however, this number is much smaller due to various crystal symmetries (see e.g.  \cite{boyd, mol}).  We also remark that  nonlinear susceptibility tensors obey the Kramers-Kronig relations describing their dependence on generating frequencies (see e.g. \cite[Section 1.7]{boyd}).

\end{remark}

\bigskip

\n In this work we will further limit ourselves to the (degenerate) case when $\omega_1 = \omega_2 =: \omega$ and hence $\omega_3 = 2\omega$. Note that for $\hat{P}(\omega_3)$ the second sum in \eqref{chi^2 sum permutation} drops, as there is in this case only one permutation of $\omega_1, \omega_2$. However, the sum stays for $\hat{P}(\omega_1)$, altogether yielding
\begin{equation} \label{P SHG}
\begin{split}
\hat{P}(\omega) &= 2\veps_0 \chi^{(2)}(\omega; 2\omega, -\omega) \hat{E}(2\omega) \hat{E}^*(\omega)
\\
\hat{P}(2\omega) &= \veps_0 \chi^{(2)}(2\omega; \omega, \omega) \hat{E}(\omega) \hat{E}(\omega).
\end{split}
\end{equation}

\n Let $E_1(x) = \hat{E}(x; \omega)$ and $E_2(x)=\hat{E}(x; 2\omega)$ (to simplify the notation we dropped $\hat{\cdot}$), and denote
$$\chi_1(x,\omega):=\frac{2}{c^2} \chi^{(2)}(\omega; 2\omega, -\omega) \qquad \qquad \chi_2(x,\omega):=\frac{4}{c^2} \chi^{(2)}(2\omega; \omega, \omega).$$
Taking into account that $\chi_1$ (which describes the interaction of a $2\omega$ wave and a $-\omega$ wave) and $\chi_2$ (which describes the interaction of two $\omega$ waves) in general may be different and $x$-dependent, we arrive at the {\it second-harmonic generation}  governing system of PDEs
\begin{equation} \label{SHG}
\begin{cases}
\displaystyle \Delta E_1 + \frac{\omega^2}{c^2} \veps(x, \omega) E_1  = -\omega^2\chi_1(x,\omega) E_2 E_1^*,
\\[.15in]
\displaystyle \Delta E_2 + \frac{4 \omega^2}{c^2} \veps(x, 2\omega) E_2  = -\omega^2\chi_2(x,\omega) E_1 E_1.
\end{cases}
\end{equation}

\n
{\bf {The Scattering Problem for Second-Harmonic Generation.}} We are interested in the interaction of a monochromatic time harmonic incident electrical field with a nonlinear optical medium of bounded support modeled by the second-harmonic generation as described above. We assume that the nonlinear medium occupies a bounded connected region $D\subset {\mathbb R}^3$ with smooth boundary $\partial D$ where $\nu$ denotes the outward normal vector.  We further assume that this medium is probed by a time harmonic incident field at frequency $\omega$ given by $\tilde{E}^{\text{in}}(x,t)=E^{\text{in}}_1(x) e^{-i\omega t}$, where the spacially dependent part $E^{\text{in}}_1$ is an entire solution of 
\begin{equation}\label{inc}
\Delta E^{\text{in}}_1 + \frac{\omega^2}{c^2} E^{\text{in}}_1 = 0, \qquad \qquad \text{in} \ \RR^3.
\end{equation}
\begin{figure}[h]
\center
\captionsetup{width=.90\linewidth}
\includegraphics[scale=0.5]{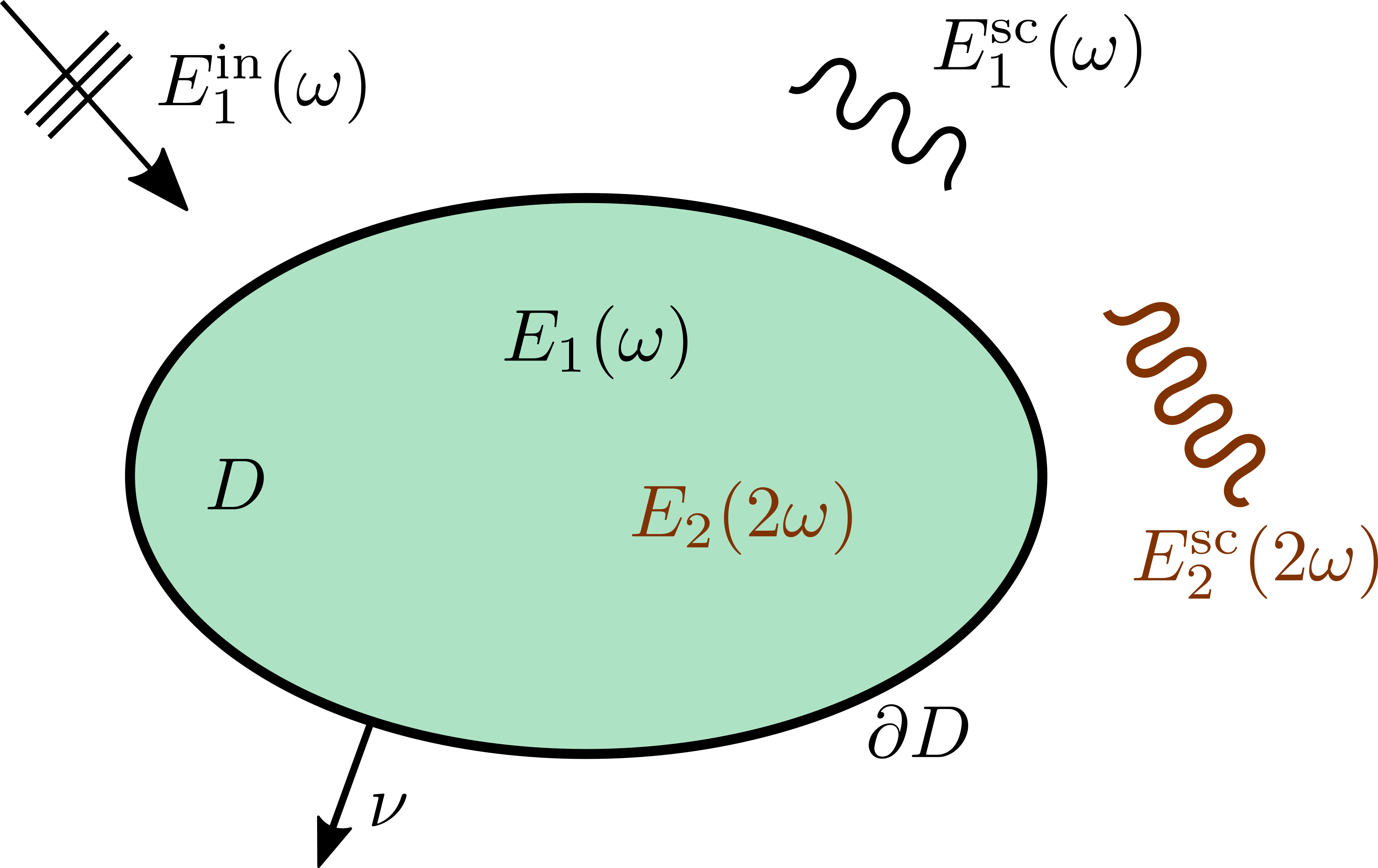}
\caption{The scattering of a monochromatic time-harmonic incident wave by a second-harmonic generation nonlinear optical medium of bounded support $D$.}
\label{FIG cloak}
\end{figure}
\n
The incident field generates a transmitted field $\tilde{E}^{\text{tr}}(x,t)$ in $D$ and a scattered field  $\tilde{E}^{\text{sc}}(x,t)$ in $\RR^3 \backslash \overline{D}$, which due to the nonlinear response of the medium are sums of two time-harmonic waves at frequencies $\omega$ and $2\omega$, that is 
\begin{equation*}
\tilde{E}^{\text{tr}}(x,t) = E_1(x) e^{-i\omega t} + E_2(x) e^{-i2\omega t}, \qquad \qquad
\tilde{E}^{\text{sc}}(x,t) = E_1^{\text{sc}}(x) e^{-i\omega t} + E_2^{\text{sc}}(x) e^{-i2\omega t}.
\end{equation*}
The components  $E_1$ and $E_2$ of the total field  inside $D$ satisfy
\begin{equation} \label{trans}
\begin{cases}
\Delta E_1 + \displaystyle{\frac{\omega^2}{c^2} \veps(x,\omega) E_1}  = -\omega^2\chi_1(x,\omega) E_2 E_1^*, \hspace{0.7in} &\text{in} \ D,
\\[.2in]
\displaystyle{\Delta E_2 + \frac{4 \omega^2}{c^2} \veps(x,2\omega) E_2}  = -\omega^2\chi_2(x,\omega) E_1 E_1, &\text{in} \ D,
\end{cases}
\end{equation}
whereas the components $E_1^{\text{sc}}$ and $E_2^{\text{sc}}$ of the scattered field satisfy
\begin{equation} \label{scattered}
\begin{cases}
\displaystyle{\Delta E_1^{\text{sc}} + \frac{\omega^2}{c^2}  E_1^{\text{sc}}} = 0, &\text{in} \ \RR^3 \backslash \overline{D},
\\[.1in]
\displaystyle{\Delta E_2^{\text{sc}} + \frac{4\omega^2}{c^2}  E_2^{\text{sc}}} = 0, &\text{in} \ \RR^3 \backslash \overline{D}.
\end{cases}
\end{equation}
Furthermore, both the trace and normal derivatives of  the total fields inside and outside $D$ corresponding to the same frequency are continuous across $\partial D$, {\it i.e.}, the following transmission conditions hold
\begin{equation} \label{trans-cond}
\left\{
\begin{array}{rrcllll}
E_1 = E^{\text{in}}_1 + E_1^{\text{sc}} &\qquad \mbox{and} \qquad& \displaystyle{\frac{\partial E_1}{\partial \nu} = \frac{\partial E^{\text{in}}_1}{\partial \nu} + \frac{\partial E_1^{\text{sc}}}{\partial \nu}}  &\qquad  \text{on} \ \partial D,
\\[.2in]
E_2 = E_2^{\text{sc}}\;\;\; \;\;\;  &\qquad \mbox{and} \qquad&  \displaystyle{\frac{\partial E_2}{\partial \nu} = \frac{\partial  E_2^{\text{sc}}}{\partial \nu} } \;\;&\qquad \text{on} \ \partial D.
\end{array}
\right.
\end{equation}
Finally the scattered fields $E_1^{\text{sc}}$, $E_2^{\text{sc}}$ are radiating, {\it i.e.}, they satisfy the Sommerfeld radiation condition, which in three dimension requires
\begin{equation}\label{Somer}
\displaystyle{\lim_{r\to\infty}r\left( \frac{\partial E_1^{\text{sc}}}{\partial r}-i\frac{\omega}{c} E_1^{\text{sc}}\right)=0\qquad \mbox{and}\qquad \displaystyle\lim_{r\to\infty}r\left( \frac{\partial E_2^{\text{sc}}}{\partial r}-i\frac{2\omega}{c} E_2^{\text{sc}}\right)=0},
\end{equation}
uniformly with respect to $\hat{x}:=x/|x|$. In nonlinear optics, the equations (\ref{inc})-(\ref{Somer}) define the scattering problem for a  nonlinear medium $D$ of second-harmonic generation type, with a monochromatic time-harmonic incident field. 

Although higher order harmonic generation in nonlinear optics is investigated in the physics  and material science literature (see e.g. \cite{boyd, mol}), a rigorous mathematical analysis of many questions related to such models is still in its infancy \cite{AZ21, RS23}. There is, however, a vast mathematical literature on direct and inverse problems for semilinear scalar wave equations in the time-domain, or at single frequency (e.g. with Kerr nonlinearities \cite{kerr3}); we refer the reader to \cite{non2,  plamen, DeFilippis_2023, non3, gries, non1, KurLasUhl} and references therein for samples of this work.  

In this paper we investigate, among other things, the existence of probing frequencies  $\omega$ that may yield a vanishing $2\omega$ scattered field, given a second-harmonic generation inhomogeneity of compact support. In other words, at such frequencies the nonlinear effects of the medium can be invisible to an external observer. These frequencies are conceptually somewhat similar to non-scattering wave numbers and/or  transmission eigenvalues in linear scattering theory, {\it i.e.},  probing frequencies at which it is possible to have zero scattering from a given linear inhomogeneity \cite{blas2, blas, CBMS2, nonscattering, nonscattering-A, cak-xiao, Hu, nonscattering-SS}. Generally speaking, the analysis presented here introduces a  mathematical framework needed to investigate a multitude of questions related  to the  nonlinear scattering problem associated with a second-harmonic generation process.

\section{Formulation of the Problem and our Main Results}
\setcounter{equation}{0}

\n
As described above, if we require that  $E_2^{\text{sc}}\equiv 0$ in $\RR^3 \backslash \overline{D}$ in the scattering problem (\ref{inc})-(\ref{Somer}), then we obtain  the second-harmonic non-scattering problem. We introduce the notation that
\begin{eqnarray*}
&&\hskip -10pt \hbox{\bf (N1)~} \hbox{A frequency } \omega, \hbox{ for which } (\ref{inc})-(\ref{Somer}) \hbox{ has a non-trivial solution with } E_2^{\text{sc}}\equiv 0  \\
&&\hskip 30pt \hbox{ in } \RR^3 \backslash \overline{D} \hbox{ is called a second-harmonic non-scattering frequency}.
\end{eqnarray*}
In the same way as we arrive at the transmission eigenvalue problem for the study of non-scattering wave numbers in linear scattering theory (see e.g. \cite{CBMS2}), here we consider the equation for the incident field (\ref{inc}) only in $D$ and denote $V:=E^{\text{in}}_1|_D$. Noting that $E_1-V$ represents the $\omega$-scattered field inside $D$ and letting $q(x, \omega):=\veps(x,\omega)/{c^2}$, we obtain the following nonlinear eigenvalue problem 
\begin{equation} \label{transmission}
\hspace*{-0.2in} \begin{cases}
\Delta E_1 + \omega^2 q(x,\omega) E_1  = -\omega^2\chi_1(x,\omega) E_2 E_1^* \; &\text{in} \ D
\\[.1in]
\Delta E_2 + 4 \omega^2 q(x, 2 \omega) E_2  = -\omega^2\chi_2(x,\omega) E_1^2 &\text{in} \ D
\\[.1in]
\Delta V + \frac{\omega^2}{c^2}  V = 0 &\text{in} \ D
\\[.1in]
 \displaystyle{\frac{\partial (E_1-V)}{\partial \nu}}  = {\mathcal N}_\omega(E_1-V) &\text{on} \ \partial D
\\[.1in]
E_2 =  \displaystyle{\frac{\partial E_2}{\partial \nu}}  = 0 &\text{on} \ \partial D~.
\end{cases}
\end{equation}
Here ${\mathcal N}_\omega$ is the exterior Dirichlet-to-Neumann operator 
$${\mathcal N}_\omega: F \mapsto \restr{\displaystyle{\frac{\partial W}{\partial \nu}}}{\partial D}$$
with $W$ being the unique outwardly radiating solution of 
$$\Delta W+ \frac{\omega^2}{c^2}  W = 0 \;\; \text{in} \ \RR^3 \backslash \overline{D}, \qquad  W=F \;\;  \text{on} \ \partial D.$$
\begin{eqnarray*}
&&\hskip -10pt \hbox{\bf (N2)~} \hbox{We call an  } \omega \hbox{ for which } (\ref{transmission}) \hbox{ has a nontrivial solution a second-harmonic }\\
&&\hskip 100pt \hbox{transmission eigenvalue}. 
\end{eqnarray*}
Being a second-harmonic transmission eigenvalue is a necessary condition for being a second-harmonic non-scattering frequency. Furthermore if $V$ can be extended as an entire solution to the Helmholtz then it is also a sufficient condition. The extendability is likely related to the regularity of $D$ and the constitutive material properties of the inhomogeneity, as is the case for linear scattering (see \cite{nonscattering, nonscattering-A,  nonscattering-SS} . It would be very interesting to study the nonlinear second-harmonic transmission eigenvalue problem, and possibly also its inhomogeneous version where the Cauchy data $E_2, {\partial E_2}/{\partial \nu}$ may be prescribed in order to control the field at frequency $2\omega$ outside $D$.  

\n
However, we are not in the position to analyze (\ref{transmission})  at this time, and instead here we investigate  the solvability of a ``weaker" problem, which provides necessary conditions for $\omega$ being a second-harmonic transmission eigenvalue (and therefore also a necessary condition for $\omega$ being a second-harmonic non-scattering frequency). More specifically, we study the existence of nontrivial solutions $E_1$ and $E_2$ to
\begin{equation} \label{transmission 2}
\begin{cases}
\displaystyle \Delta E_1 + \omega^2 q(x,\omega)  E_1  = -\omega^2\chi_1(x,\omega) E_2 E_1^* \hspace{0.7in} &\text{in} \ D
\\[.15in]
\displaystyle \Delta E_2 + 4 \omega^2 q(x, 2 \omega) E_2   = -\omega^2\chi_2(x,\omega) E_1^2  &\text{in} \ D\\[.15in]
E_2 =  \displaystyle{\frac{\partial E_2}{\partial \nu}}  = 0 &\text{on} \ \partial D.
\end{cases}
\end{equation}

\n  As we discarded the incident field in the above system and restricted the $\omega$ and $2\omega$ fields inside $D$, \eqref{transmission 2} also applies to the situation where there is a $\omega$-source outside the region $D$. Specifically, \eqref{transmission 2} also provides necessary conditions for the vanishing of the second-harmonic wave outside $D$, given an exterior $\omega$-source.

\begin{eqnarray*}
&&\hskip -10pt \hbox{\bf (N3)~} \hbox{We call an } \omega \hbox{ for which } (\ref{transmission 2}) \hbox{ has a nontrivial solution a generalized second-harmonic} \\
&&\hskip 50pt \hbox{transmission eigenvalue, or for short: a generalized transmission eigenvalue}.
\end{eqnarray*}
Note that if at a generalized transmission eigenvalue, the field $E_1$ can be extended to all of $\RR^3$ as sum of an outwardly radiating and an incident entire solution of the Helmholtz equation, then this eigenvalue is a second-harmonic non-scattering frequency. 

In this paper we study  the scalar version of (\ref{transmission 2}),  {\it i.e.}, we assume  $E_1$ and $E_2$ are scalar functions, which we in the following will denote by $u_1$ and $u_2$, respectively. This assumption simplifies  many technicalities in the presentation, but will allow us to capture the mathematical structure associated with these nonlinearities for our system of PDEs. In three dimensions, the results presented here are obtained for the spherically symmetric case, {\it i.e.}, when $D:=B_1(0)$ is the unit  ball centered at the origin and  all the coefficients in the PDEs depend only on the  radius $r=|x|$. As a lead in to this, we consider the one-dimensional case, with $D:=(0, \,1)$. It should be noted that, the one-dimensional problem is also of interest in its own right, as the second-harmonic generation process in laser technology often is modelled as a one-dimensional problem, by considering applied waves that fall onto the nonlinear medium at normal incidence \cite{boyd}. Next we summarize our main results.

\begin{remark} \label{REM omega dependence}
\em For the sake of simplicity of presentation we are going to assume that the functions $q, \chi_1, \chi_2$ are real-valued and do not depend on the frequency $\omega$. However, all our results from Sections~\ref{SECT 1d case main}, \ref{SECT 3d case main} and \ref{SECT source f} and our methods of analysis can be adapted to the situation where $q, \chi_1, \chi_2$ and the source function $f$ from Section~\ref{SECT source f} (cf. \eqref{trans rad f}) are complex-valued functions that also depend on $\omega$. To that end additional assumptions should be imposed and some of the existing assumptions should be modified. For example, in Sections~\ref{SECT 1d case main} and \ref{SECT 3d case main} additionally we need $q, \chi_1$ and $\chi_2$ to be continuous functions of $\omega$ (near the origin in the complex plane) uniformly in $x$. Furthermore, the positivity of $\chi_2$ a.e. in $D$, as required in \eqref{M and chi2} below, should be replaced with positivity at the frequency $\omega=0$:

\begin{equation} \label{chi2>0 omega=0}
\chi_2(0, \cdot) > 0 , \qquad \text{a.e.} \ x \in D~.
\end{equation}

\n For Lemma~\ref{LEM real case} we need $\chi_2$ and $q$ to be real-valued for $\omega>0$ small enough. In Section~\ref{SECT source f} we need $q, \chi_1, \chi_2$ and $f$ to be analytic functions of $\omega$. Moreover, $q$, as a function of $\omega$ must satisfy the following symmetry relation (we suppress the $x$-dependence):

\begin{equation} \label{q symmetry}
\overline{q(\omega)} = q(-\overline{\omega}).
\end{equation}

\n The property \eqref{q symmetry} is physically justified and expresses the fact that physical fields are real. Specifically, $q$ (or rather $q-1$) is the Fourier transform of a real-valued function -- the linear sucseptibility. The nonlinear susceptibilities $\chi_1, \chi_2$ in the frequency domain also satisfy \eqref{q symmetry}, for the same reason, and in particular at $\omega=0$ they must be real-valued, which is in line with the assumption \eqref{chi2>0 omega=0}.   
\end{remark}

\subsection{About generalized transmission eigenvalues in the 1-D case} \label{SECT 1d case main}
We consider the one-dimensional problem:
\begin{equation} \label{trans 1d}
\begin{cases}
u_1'' + \omega^2 q u_1  = -  \omega^2 \chi_1 u_2 \overline{u}_1 \hspace{0.7in} &\text{in} \ D:= (0,\, 1)
\\[.05in]
u_2'' + 4\omega^2 q u_2  = -  \omega^2 \chi_2  u_1^2 &\text{in} \ D
\\[.05in]
u_2 = u_2' = 0 &\text{at} \ \partial D = \{0,1\},
\end{cases}
\end{equation}

\n\n We suppress the domain $D$ from the notation of all the introduced function spaces. In our proofs we assume that the nonlinear susceptibilities $\chi_1$ and $\chi_2$ are real valued functions of $x$ only, with

\begin{equation} \label{M and chi2}
M: = \|\chi_1\|_{L^\infty} + \|\chi_2\|_{L^\infty} < \infty, \qquad \text{and} \qquad
\chi_2(x)>0, \qquad \text{a.e.} \ x \in D.
\end{equation}

\n Further, we assume that the relative permittivity $q$ is a real valued function of $x$ only, with

\begin{equation} \label{q assumption}
q \in C^1[0,1], \qquad \qquad Q:=\|q\|_{L^\infty}>0.
\end{equation}
To define a weak solution of the problem \eqref{trans 1d}, we introduce the Hilbert space $X = H^2 \times L^2$, whose elements will be denoted using the boldface notation $\bm{u} = (u_1, u_2) \in X$. We let $\|\cdot\|_X$ and $\langle \cdot, \cdot \rangle_X$ denote the norm and the inner product of $X$, respectively, defined in the usual way

\begin{equation} \label{X inner product}
\langle \bm{u}, \bm{\phi} \rangle_X= \langle u_1,\phi_1\rangle_{H^2}+ \langle u_2, \phi_2\rangle_{L^2}, ~~ \hbox{ and } ~~\| \bm{u}\|^2_X =\langle \bm{u}, \bm{u} \rangle_X .
\end{equation}

\n Multiplying the first equation of \eqref{trans 1d} by $\overline{\phi}_2$, the second one by $\overline{\phi}_1$ and integrating by parts, we introduce the map $\CT:X \times X \to \CC$
\begin{equation} \label{T cali}
\CT(\bm{u}, \bm{\phi}) = \int_{D} \left[(u_1'' + \omega^2 q u_1)  \overline{\phi}_2  + \chi_1 \omega^2 u_2 \overline{u}_1 \overline{\phi}_2\right] dx + \int_{D} \left[u_2 ( \overline{\phi}_1'' + 4 \omega^2 q \overline{\phi}_1 )  + \chi_2 \omega^2 u_1^2  \overline{\phi}_1 \right] dx.
\end{equation}

\n In one dimension the Sobolev embedding implies that $H^2(D) \hookrightarrow C^1(\overline{D})$, with a continuous and compact injection. Therefore the integrals containing the nonlinearities in the above expression are well-defined. Moreover, for fixed $\bm{u} \in X$, the map $\bm{\phi} \mapsto \CT(\bm{u}, \bm{\phi})$ is conjugately linear and continuous as a map from $X \to X$, hence by the Riesz representation theorem, there exists an element, which we denote by $T(\bm{u}) \in X$, such that 

\begin{equation} \label{T Riesz}
\CT(\bm{u}, \bm{\phi}) = \langle T (\bm{u}), \bm{\phi} \rangle_X, \qquad \qquad  \bm{u}, \bm{\phi} \in X.
\end{equation}

\n In particular, $T : X \to X$ is a bounded (the image of a bounded set is bounded) continuous nonlinear map. Often we will write $T=T_\omega$ to explicitly indicate the dependence of $T$ on $\omega$.

\begin{definition} \label{DEF weak sol}
$(\omega, \bm{u}) \in \CC \times X$ is a solution of \eqref{trans 1d} iff $T_\omega(\bm{u}) = 0$.
\end{definition}

\begin{remark}
\normalfont
A more traditional space for the weak formulation of \eqref{trans 1d} might seem to be $L^2 \times H^2_0$, as it explicitly incorporates the zero boundary condition. In this space, however, the nonlinearity $u_1^2$ makes the resulting operator $T$ unbounded, i.e., defined only on a dense subspace of $L^2 \times H^2_0$.
\end{remark}

\vspace{.1in}

\n With these preliminaries we are ready to state our first result, which will be proven in Section~\ref{section4}:

\begin{theorem} \label{THM 1}
Assume \eqref{M and chi2} and \eqref{q assumption} hold. There exists 
a constant $c_Q(\chi_2)>0$, depending only on $Q$ and $\chi_2$, such that if

\begin{equation*}
 0 \neq \bm{u} \in X ~~ \hbox{ and } ~~0<|\omega|^2 \leq \frac{c_Q(\chi_2)}{M (1 + M \|\bm{u}\|_X)}
\end{equation*}

\n then $T_\omega(\bm{u}) \neq 0$.

\end{theorem}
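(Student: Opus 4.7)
The plan is a Lyapunov--Schmidt-type reduction exploiting the overdetermined Cauchy data of $u_2$ on $\partial D$, with the key algebraic ingredient being the non-degeneracy of a quadratic form arising from the positivity of $\chi_2$.

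I decompose $T_\omega = A + \omega^2(B + C)$, where $A$ contains the second-derivative terms, $B$ is the linear $q$-coefficient part, and $C$ is the quadratic nonlinearity. From the weak formulation one identifies $\ker A = \{(\alpha + \beta x, 0) : \alpha, \beta \in \CC\}$: testing against $\bm\phi = (0, \phi_2)$ gives $u_1'' = 0$ (so $u_1$ is affine), while testing against $\bm\phi = (\phi_1, 0)$ forces $u_2 \equiv 0$. Since $A$ arises from a Hermitian form on $X$, it is self-adjoint with closed range, and one has a coercivity estimate $\|A\bm u\|_X \ge c_A \|\bm u^\perp\|_X$ for the orthogonal decomposition $\bm u = \bm u^0 + \bm u^\perp$ with $\bm u^0 \in \ker A$.

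Projecting $T_\omega(\bm u) = 0$ onto $(\ker A)^\perp$ and using the elementary bounds $\|B\bm u\|_X \le CQ\|\bm u\|_X$ and $\|C(\bm u)\|_X \le CM\|\bm u\|_X^2$ (the latter from the 1D embedding $H^2 \hookrightarrow C^0$), one obtains the complementary estimate
\[
\|\bm u^\perp\|_X \le C_1(Q)\, \omega^2\, \|\bm u\|_X\, (1 + M\|\bm u\|_X).
\]
Projecting onto $\ker A$ (and dividing out the common factor $\omega^2$) yields the bifurcation equation $P(B\bm u + C(\bm u)) = 0$; extracting the leading-order contribution from $\bm u^0 = (\alpha + \beta x, 0)$, this reads $\Phi(\alpha, \beta) = -\text{error}$, where
\[
\Phi(\alpha, \beta) := \Bigl( \int_D \chi_2 (\alpha + \beta x)^2 \,dx,\; \int_D \chi_2\, x\, (\alpha + \beta x)^2 \,dx \Bigr) \in \CC^2,
\]
and the error is bounded by $C_2(Q + M\|\bm u\|_X)\|\bm u^\perp\|_X$. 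Equivalently, $\Phi(\alpha, \beta) = 0$ is the solvability condition for the overdetermined $u_2$-problem, obtained by multiplying the second equation of \eqref{trans 1d} by a solution of $\phi'' + 4\omega^2 q\phi = 0$ and integrating by parts using $u_2 = u_2' = 0$ on $\partial D$.

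The pivotal step is the non-degeneracy lemma: for $\chi_2 > 0$ a.e. on $(0,1)$, $\Phi(\alpha, \beta) = 0$ implies $(\alpha, \beta) = (0,0)$. Setting $t = \alpha/\beta = u + iv$ (with $\beta \ne 0$) and splitting $\Phi(t\beta, \beta) = 0$ into real and imaginary parts, the imaginary parts force either $v = 0$ -- ruled out by the real part, since then $\int \chi_2 (u+x)^2\,dx > 0$ -- or $u = -\int \chi_2 x / \int \chi_2 = -\int \chi_2 x^2 / \int \chi_2 x$; equality of the latter two expressions contradicts the strict weighted Cauchy--Schwarz inequality $\bigl(\int \chi_2 x\bigr)^2 < \int \chi_2 \cdot \int \chi_2 x^2$ (strict because $1$ and $x$ are not proportional on $(0,1)$). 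Homogeneity $\Phi(\lambda\alpha, \lambda\beta) = \lambda^2 \Phi(\alpha, \beta)$ and compactness of the unit sphere in $\CC^2$ upgrade this to the quantitative lower bound $|\Phi(\alpha, \beta)| \ge c_*(\chi_2) \|\bm u^0\|_X^2$. Combining the complementary and bifurcation estimates gives $\|\bm u\|_X \le C_3(Q, \chi_2)\,\omega^2 (1 + M\|\bm u\|_X)^2$, and under the hypothesis $|\omega|^2 \le c_Q(\chi_2)/(M(1+M\|\bm u\|_X))$ with $c_Q$ chosen small enough in terms of $C_3$, this forces a contradiction with $\bm u \ne 0$. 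The main obstacle is the non-degeneracy lemma, where the overdetermined boundary data on $u_2$ and the a.e. positivity of $\chi_2$ combine, via the strict weighted Cauchy--Schwarz inequality, into the essential algebraic rigidity that prevents small-frequency nontrivial solutions.
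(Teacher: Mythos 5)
Your overall strategy---a Lyapunov--Schmidt reduction, with the non-degeneracy of the quadratic functional $\Phi$ (via the strict weighted Cauchy--Schwarz inequality for $\chi_2>0$ a.e.) as the essential rigidity---is the same as the paper's, and your non-degeneracy lemma is correct and matches Lemma~\ref{LEM proj 0}. However, the particular splitting you chose opens a gap that the argument, as written, does not close. You project relative to the kernel of the \emph{frequency-independent} operator $A=L_0$ and move the linear $q$-term $B=\omega^{-2}(L_\omega-L_0)$ into the perturbation. Because $\omega^2 B\bm u$ is \emph{linear} in $\bm u$, your complementary estimate is $\|\bm u^\perp\|_X\lesssim \omega^2\|\bm u\|_X\bigl(Q+M\|\bm u\|_X\bigr)$, with a piece linear in $\|\bm u\|_X$, and the bifurcation error is $\lesssim (Q+M\|\bm u\|_X)\|\bm u^\perp\|_X$. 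Under your hypothesis on $\omega$ this gives, after dividing by $\|\bm u\|_X$, only $c_*\|\bm u\|_X\lesssim c_Q\bigl(\tfrac{Q}{M}+\|\bm u\|_X\bigr)$, i.e.\ an a priori \emph{upper bound} $\|\bm u\|_X\lesssim c_Q Q/M$, not $\bm u=0$. Likewise your final inequality $\|\bm u\|_X\le C_3\omega^2(1+M\|\bm u\|_X)^2$ merely yields $\|\bm u\|_X\lesssim c_Q/M$, which is fully compatible with $\bm u\ne 0$; there is no contradiction to invoke.

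The paper avoids this by keeping the $q$-term inside the linearization: it splits relative to the kernel $N_\omega$ of the full $L_\omega=L_0+\omega^2K$ (spanned by $z_\omega,\widetilde z_\omega$), so that the remainder $\bm w(\bm v)$ is the fixed point of a map built only from the \emph{purely quadratic} nonlinearity $A$, and therefore satisfies $\|\bm w(\bm v)\|_X\lesssim\omega^2 M\gamma_\omega\|\bm v\|_X^2$ with $\bm w(0)=0$ (Lemma~\ref{LEM B}(ii)). Every error term in Lemma~\ref{LEM proj omega} then scales like $\omega^2 M(1+Mr)\|\bm v\|_X^2$, which the lower bound $c_*\|\bm v\|_X^2$ absorbs; the conclusion $\bm v=0\Rightarrow\bm w(\bm v)=0\Rightarrow\bm u=0$ follows. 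The cost of that approach is the uniform control of $N_\omega$, $R_\omega^\perp$ and $\gamma_\omega=\|L_\omega^{-1}\|$ in $\omega$ (Lemmas~\ref{LEM PN_omega}--\ref{LEM v_omega - v_0}); to repair your argument you would effectively have to reintroduce this, e.g.\ by replacing the affine ansatz for $\bm u^0$ by solutions of $u''+\omega^2qu=0$, which is precisely what the paper does.
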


\begin{remark}
\normalfont
If the nonlinear susceptibility $\chi_2$ is a positive constant, then in the above theorem we can taken $c_Q(\chi_2) = \tilde{c}_Q \chi_2$, where $ \tilde{c}_Q>0$ only depends on $Q$.
\end{remark}

\vspace{.1in}

\n
The above theorem immediately implies:

\begin{corollary} \label{CORO}
Assume \eqref{M and chi2} and \eqref{q assumption} hold. For any $r>0$ there exists $\delta = \delta(r, Q, M,\chi_2)$, such that if $0<|\omega|<\delta$ and  $0 < \|\bm{u}\|_X < r$ then 
\begin{equation*}
T_\omega(\bm{u}) \neq 0.
\end{equation*}
\end{corollary}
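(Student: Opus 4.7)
The plan is to observe that Corollary 2.5 is essentially an immediate consequence of Theorem 2.3. The content of Theorem 2.3 is that non-vanishing of $T_\omega(\bm u)$ holds whenever $|\omega|^2$ stays below a threshold that depends on $\|\bm u\|_X$ through the factor $1/(1+M\|\bm u\|_X)$. Since the map $s \mapsto c_Q(\chi_2)/[M(1+Ms)]$ is monotonically decreasing in $s \ge 0$, restricting to the ball $\{\bm u : \|\bm u\|_X < r\}$ lets us replace the $\bm u$-dependent threshold by a single positive number depending only on $r$, $Q$, $M$ and $\chi_2$. This is exactly the uniformity that the corollary asserts.

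Concretely, I would set
\begin{equation*}
\delta = \delta(r, Q, M, \chi_2) := \sqrt{\frac{c_Q(\chi_2)}{M(1 + Mr)}},
\end{equation*}
which is strictly positive under assumption \eqref{M and chi2} (with $M<\infty$ and $\chi_2>0$ a.e. so $c_Q(\chi_2)>0$). Given any $\bm u \in X$ with $0 < \|\bm u\|_X < r$ and any $\omega \in \mathbb{C}$ with $0 < |\omega| < \delta$, one has
\begin{equation*}
|\omega|^2 < \delta^2 = \frac{c_Q(\chi_2)}{M(1+Mr)} \le \frac{c_Q(\chi_2)}{M(1+M\|\bm u\|_X)},
\end{equation*}
where the last inequality uses $\|\bm u\|_X < r$ and the monotonicity noted above. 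Theorem 2.3 then yields $T_\omega(\bm u) \neq 0$, which is the conclusion of the corollary.

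There is no genuine obstacle here; all the analytical effort is encapsulated in Theorem 2.3, and the corollary is a routine repackaging that trades the quantitative $\bm u$-dependent bound of Theorem 2.3 for a qualitative uniform statement over bounded subsets of $X$. The only minor point worth flagging explicitly in the write-up is that the strict inequality $\|\bm u\|_X < r$ together with the monotonicity of the threshold permits the use of a strict inequality $|\omega| < \delta$, matching the formulation of the corollary.
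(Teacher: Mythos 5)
Your proof is correct and coincides with what the paper intends: the paper simply remarks that Theorem~\ref{THM 1} ``immediately implies'' the corollary, and your explicit choice $\delta = \sqrt{c_Q(\chi_2)/[M(1+Mr)]}$ together with the monotonicity of the threshold in $\|\bm{u}\|_X$ is exactly the routine verification behind that remark.
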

\n
This corollary implies that there are no generalized (and hence also no second-harmonic) transmission eigenvalues, with eigenfunctions in the ball $B_r^X(0)$, provided $\omega$ is sufficiently small and nonzero. Alternatively, Theorem~\ref{THM 1} implies that a sufficiently small $\omega$ can be a generalized transmission eigenvalue only if its corresponding eigenfunction satisfies

\begin{equation} \label{e-function blow up}
\|\bm{u}\|_X \geq \frac{c}{|\omega|^2},
\end{equation}

\n i.e., if generalized transmission eigenvalues approaching 0 exist, then their corresponding eigenfunctions blow up at a rate of $1/|\omega|^2$. This leads to the following dichotomy: either such blow-up eigenfunctions exist, or our non-existence result can be improved. Based on numerical evidence we conjecture that both situations may occur. In the degenerate case $\chi_1=0$ the two equations decouple and the variable $u_1$ appears in the second equation as a source term. In Section \ref{chi1equal0}, it is shown that the generalized transmission eigenvalues form a discrete set (without finite accumulation points) and for that reason there will be no such eigenvalues in a sufficiently small interval $(0,\delta)$. However, for $\chi_1$ strictly positive we conjecture, entirely based on numerical evidence, that the first situation occurs:

\begin{conjecture} \label{CONJ existence}
Assume \eqref{M and chi2} and \eqref{q assumption} hold. Furthermore assume $\chi_1>0$. Then there exist $\delta, c>0$ such that any $\omega \in (0, \delta)$ is a generalized transmission eigenvalue, with a corresponding eigenfunction $\bm{u}$ satisfying \eqref{e-function blow up}.    
\end{conjecture}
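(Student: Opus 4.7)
The plan is to desingularize the system at $\omega=0$ via the rescaling $u_i=v_i/\omega^2$ for $i=1,2$. Under this change of variables, \eqref{trans 1d} becomes
\begin{equation*}
\begin{cases}
v_1'' + \omega^2 q\, v_1 + \chi_1\, v_2\, \overline{v}_1 = 0 & \text{in } D,\\
v_2'' + 4\omega^2 q\, v_2 + \chi_2\, v_1^2 = 0 & \text{in } D,\\
v_2(0)=v_2'(0)=v_2(1)=v_2'(1)=0, &
\end{cases}
\end{equation*}
in which $\omega$ now enters only through bounded linear perturbations, so the problem is regular at $\omega=0$. Writing $G(\omega,\bm{v})=0$ for the abstract formulation in suitable Banach spaces $X$ and $Y$ (with $Y$ encoding the overdetermined boundary data on $v_2$), the conjecture would follow from: (i) existence of a nontrivial solution $\bm{v}^{(0)}=(v_1^{(0)},v_2^{(0)})$ at $\omega=0$; (ii) continuation of this seed to a curve $\omega\mapsto \bm{v}(\omega)$ for $\omega\in(0,\delta)$ with $\bm{v}(\omega)$ bounded away from zero in $X$. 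Un-rescaling then yields $\|\bm{u}\|_X = \|\bm{v}(\omega)\|_X/\omega^2\sim 1/\omega^2$, producing simultaneously a generalized transmission eigenvalue and the blow-up estimate \eqref{e-function blow up}.

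For step (ii), the natural tool is the implicit function theorem applied to $G$. To handle the two-parameter symmetry $(v_1,v_2)\mapsto(\lambda e^{i\alpha} v_1,\lambda^2 e^{2i\alpha} v_2)$ with $\lambda>0$, $\alpha\in\RR$, one would instead carry out a Lyapunov--Schmidt reduction on a transversal slice. This reduces step (ii) to checking that $D_{\bm v}G(0,\bm{v}^{(0)})$ is Fredholm of index zero and that its kernel coincides with the two-dimensional tangent space of the symmetry orbit -- a nondegeneracy condition expected to hold at a generic seed.

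The main obstacle, and the crux of the argument, is step (i). At $\omega=0$, the Cauchy data $v_2(0)=v_2'(0)=0$ together with the second equation give
\begin{equation*}
v_2(x)=-\int_0^x(x-y)\,\chi_2(y)\,v_1(y)^2\,dy,
\end{equation*}
so the remaining conditions $v_2(1)=v_2'(1)=0$ translate into the two complex compatibility requirements
\begin{equation*}
\int_0^1 \chi_2\, v_1^2\,dy=0, \qquad \int_0^1 y\,\chi_2\, v_1^2\,dy=0,
\end{equation*}
while $v_1$ itself (with no boundary data) must satisfy the nonlinear integro-differential equation $v_1''(x)=\chi_1(x)\,\overline{v}_1(x)\int_0^x(x-y)\,\chi_2(y)\,v_1(y)^2\,dy$. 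Since $v_1$ is complex-valued, the compatibility requirements amount to only four real scalar constraints on an infinite-dimensional space, so solutions should exist in abundance -- but producing one analytically is nontrivial.

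Several routes look plausible for step (i): an explicit ansatz such as $v_1(x)=e^{ikx}\phi(x)$ with $\phi$ real and $k$ a bifurcation parameter, reducing the problem to the vanishing of certain oscillatory moments of $\phi$; a variational approach on a Nehari-type manifold adapted to the scaling invariance; or a Leray--Schauder degree argument on the symmetry-quotiented fixed-point formulation of the integro-differential equation above. In each route, the difficulty is to avoid the trivial branch $v_1\equiv 0$ (where the coupling is degenerate) and to ensure that the resulting solution is nondegenerate so step (ii) applies. The positivity hypothesis $\chi_1>0$ presumably fixes the correct sign of the coupling: the decoupled case $\chi_1=0$ discussed above yields only a discrete set of eigenvalues with no accumulation at $0$, so any mechanism producing eigenvalues densely near the origin must genuinely use the nonlinear coupling.
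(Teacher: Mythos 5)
The statement you are asked to prove is labeled a \emph{conjecture} in the paper, supported (in the authors' words) ``entirely [by] numerical evidence''; the paper offers no proof. So there is nothing to compare your argument against, and the only question is whether your proposal is a complete argument in its own right. It is not, and you concede as much: step (i), producing a nontrivial seed $\bm{v}^{(0)}$ for the $\omega=0$ overdetermined system, is left with three tentative ``routes'' and no construction, and step (ii) rests on an unverified nondegeneracy hypothesis (``expected to hold at a generic seed'').

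There is also one concrete error worth correcting. The map $(v_1,v_2)\mapsto(\lambda e^{i\alpha}v_1,\lambda^2 e^{2i\alpha}v_2)$ is \emph{not} a symmetry of the rescaled system unless $\lambda=1$: substituting into the first equation yields
$\lambda e^{i\alpha}\big(v_1''+\omega^2 q v_1 + \lambda^2\chi_1 v_2\overline{v}_1\big)$,
which vanishes for all $\lambda>0$ only if $\chi_1 v_2\overline{v}_1\equiv 0$, and under the hypothesis $\chi_1>0$ that forces the degenerate case. Only the $U(1)$ phase action ($\lambda=1$, $\alpha$ free) survives, so the symmetry orbit is one-dimensional, not two, and the Lyapunov--Schmidt bookkeeping in step (ii) must be adjusted accordingly. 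That said, the rescaling $u_i=v_i/\omega^2$ is a genuinely useful observation: it desingularizes the $\omega\to 0$ limit, automatically reproduces the $1/\omega^2$ rate of \eqref{e-function blow up}, and isolates a clean, $\omega$-independent, overdetermined nonlinear boundary value problem whose nontrivial solvability is the true obstruction. Establishing that solvability (and the continuation in $\omega$) would prove the conjecture; as it stands, your proposal is a plausible research program, not a proof.
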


\n As the lemma below shows, it is important for the validity of this conjecture that the first component, $u_1$, of the eigenfunction $\bm{u}$ be complex valued:

\begin{lemma} \label{LEM real case}
Assume \eqref{M and chi2} and \eqref{q assumption} hold (in fact, we only need $q \in L^\infty(0,1)$). There exists $\delta>0$ depending only on $q$, such that if $\omega \in (0, \delta]$ and $0\neq \bm{u} =(u_1, u_2) \in X$ with  $u_1$ real  valued, then
\begin{equation*}
T_\omega(\bm{u}) \neq 0.
\end{equation*}

\n Moreover, when $q = 1$ one can take $\delta = \frac{\pi}{2}$.

\end{lemma}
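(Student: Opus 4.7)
The plan is to exploit the overdetermined boundary data $u_2(0)=u_2(1)=u_2'(0)=u_2'(1)=0$: because the second component must vanish to second order at both endpoints, one obtains a strong compatibility constraint by testing against any homogeneous solution of the linear part. First, I would observe that $u_2$ must also be real when $u_1$ is, by writing $u_2 = v + iw$ and noting the imaginary part $w$ satisfies $w'' + 4\omega^2 q w = 0$ with Cauchy data $w(0) = w'(0) = 0$, so $w \equiv 0$ by ODE uniqueness. I would then introduce $\phi$ as the solution of the initial value problem
\begin{equation*}
\phi'' + 4\omega^2 q\, \phi = 0 \ \text{ on } (0,1), \qquad \phi(0) = 0, \qquad \phi'(0) = 1,
\end{equation*}
and multiply the second equation in \eqref{trans 1d} by $\phi$. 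Integrating by parts twice, all four boundary contributions vanish because of the overdetermined data on $u_2$, and the identity $\int_0^1 (u_2'' + 4\omega^2 q u_2)\phi\, dx = -\omega^2 \int_0^1 \chi_2 u_1^2 \phi\, dx$ collapses to the single constraint
\begin{equation*}
\int_0^1 \chi_2(x)\, u_1^2(x)\, \phi(x)\, dx = 0.
\end{equation*}

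The next step is to prove $\phi > 0$ on $(0,1]$ for sufficiently small $\omega$. When $q \equiv 1$, the solution is explicit, $\phi(x) = \sin(2\omega x)/(2\omega)$, which is strictly positive on $(0,1]$ exactly when $2\omega < \pi$, yielding the sharp threshold $\delta = \pi/2$. For general $q \in L^\infty$ I would pass to the Volterra form $\phi(x) = x - 4\omega^2 \int_0^x (x-s) q(s) \phi(s)\, ds$ and run a short bootstrap: first establish $|\phi(x)| \leq 2x$ on $[0,1]$ when $\omega^2 \leq 3/(4\|q\|_\infty)$, then plug back to deduce $\phi(x) \geq x\bigl(1 - \tfrac{4}{3}\omega^2 \|q\|_\infty x^2\bigr) \geq x/2$ on $(0, 1]$ for $\omega^2 \leq 3/(8\|q\|_\infty)$. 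The resulting threshold $\delta$ depends only on $\|q\|_\infty$.

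With $\chi_2 > 0$ a.e.\ in $D$, $u_1^2 \geq 0$, and $\phi > 0$ a.e.\ on $(0,1)$, the compatibility integral has nonnegative integrand, forcing $u_1 \equiv 0$ a.e., and hence on $[0,1]$ by continuity (from $H^2 \hookrightarrow C^1$ in one dimension). Substituting $u_1 \equiv 0$ into the second equation gives a homogeneous linear ODE for $u_2$ with zero Cauchy data at $x = 0$, so $u_2 \equiv 0$ by uniqueness, contradicting $\bm{u} \neq 0$.

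The chief technical issue I anticipate is the positivity argument for $\phi$ in the general $q \in L^\infty$ case: the threshold $\delta$ must depend only on $q$, not on $\chi_1$, $\chi_2$, or on the size of $\bm{u}$. This is precisely why $\phi$ is chosen as a solution of the \emph{linear} homogeneous part, decoupled from the nonlinear coupling to $u_1$, rather than any eigenfunction-type object that would carry dependence on $\bm{u}$.
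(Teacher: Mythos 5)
Your proposal is correct and follows essentially the same path as the paper: both prove the lemma by testing against the homogeneous solution $\phi=z_{2\omega}$ of $\phi''+4\omega^2 q\phi=0$, $\phi(0)=0$, $\phi'(0)=1$, extracting the compatibility constraint $\int_0^1 \chi_2 u_1^2\phi\,dx=0$, and then using positivity of $\phi$ (for $\omega$ small) and of $\chi_2$ to force $u_1\equiv 0$ and hence $u_2\equiv 0$. One small stylistic remark: rather than multiplying the second ODE by $\phi$ and integrating by parts (which implicitly invokes $u_2\in H^2$ with the stated Cauchy data, an extra regularity step since a priori $u_2\in L^2$ only), it is cleaner to plug $\bm{\phi}=(\phi,0)$ directly into the weak form $\langle T_\omega(\bm{u}),\bm{\phi}\rangle_X$ as the paper does---the term $\int u_2(\overline{\phi}''+4\omega^2 q\overline{\phi})\,dx$ then vanishes identically without any regularity considerations; your Volterra bootstrap for positivity of $\phi$ is a nice self-contained replacement for the paper's citation of its appendix lemma.
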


\n  For the proof of this lemma we refer to Section~\ref{SECT real case}.

\subsection{About generalized transmission eigenvalues in the 3-D radially symmetric case} \label{SECT 3d case main}
Here we consider the problem 
\begin{equation} \label{trans}
\begin{cases}
\Delta u_1 + \omega^2 q u_1  = - \omega^2 \chi_1 u_2 \overline{u}_1 \hspace{0.7in} &\text{in} \ D:=\left\{|x|<1\right\}
\\[.05in]
\Delta u_2 + 4 \omega^2 q u_2  = - \omega^2 \chi_2 u_1^2 &\text{in} \ D
\\[.05in]
u_2 = \partial_\nu u_2 = 0 &\text{on} \ \partial D:=\left\{|x|=1\right\}
\end{cases}
\end{equation}
where $D:= B_1(0) \subset \RR^3$ is the unit ball centered at the origin and $q$, $\chi_1$ and $\chi_2$ are radial functions of the spacial variable, i.e., functions of $r=|x|$. Looking only for radially symmetric solutions $u_1:=y_1(r)$ and $u_2:=y_2(r)$ of  (\ref{trans}), the problem  reads
\begin{equation} \label{y tans problem}
\begin{cases}
\displaystyle y_1'' + \frac{2}{r} y_1' + \omega^2 q y_1  = - \omega^2 \chi_1 y_2 \overline{y}_1 \hspace{0.7in} &\text{for} \ r<1
\\[.1in]
\displaystyle y_2'' + \frac{2}{r} y_2' + 4 \omega^2 q y_2  = - \omega^2 \chi_2 y_1^2 &\text{for} \ r<1
\\[.1in]
y_2(1) = y_2'(1) = 0 
\\[.05in]
y_1(0), \ y_2(0) \quad \text{are bounded} ,
\end{cases}
\end{equation}
where the derivatives are with respect to $r$. 
\n We define new functions of $r$, denoted $u_1(r)$ and $u_2(r)$ (by a slight abuse of notation) as follows $u_1(r) = r y_1(r)$ and $u_2(r)=ry_2(r)$. The boundary conditions at the endpoint $r=1$ stay the same, i.e., $u_2(1) = u_2'(1) = 0$ and at the endpoint $r=0$ they become $u_1(0) = u_2(0) = 0$. Thus we end up with the following one dimensional problem for the radial functions $u_1$ and $u_2$ 
\begin{equation} \label{trans rad}
\begin{cases}
\displaystyle u_1'' + \omega^2 q u_1  = - \frac{\omega^2}{x}  \chi_1 u_2 
 \overline{u}_1 \hspace{0.7in} &\text{in} \ D:= (0,\, 1)
\\[.1in]
\displaystyle u_2'' + 4 \omega^2 q u_2  = - \frac{\omega^2}{x} \chi_2 u_1^2 &\text{in} \ D
\\[.1in]
u_2(0) = u_2(1) = u_2'(1) = 0 
\\[.05in]
u_1(0) = 0
\end{cases}
\end{equation}
\n To make it look more like the one dimensional case we have used $x$ for the radial variable. Analogously to Section~\ref{SECT 1d case main}, and again suppressing the domain $D$ from the notation of the function spaces, we let

\begin{equation} \label{X diamond}
X_{\diamond} = H^2_{\diamond} \times L^2, \qquad \qquad H^2_{\diamond} = \left\{ u \in H^2: u(0) = 0 \right\}.
\end{equation}

\n We endow the space $X_\diamond$ with the same norm and inner product as $X$ (cf. \eqref{X inner product}). Multiplying the first equation of \eqref{trans rad} by $\overline{\phi}_2$, the second one by $\overline{\phi}_1$ and integrating by parts motivates us to introduce the map $\CT^\diamond : X_\diamond \times X_\diamond \to \CC$, defined by

\begin{equation} \label{T cali rad}
\begin{split}
\CT^\diamond(\bm{u}, \bm{\phi}) =& \int_{D} \left[(u_1'' + \omega^2 q u_1)  \overline{\phi}_2  + \frac{\omega^2}{x} \chi_1 u_2 \overline{u}_1 \overline{\phi}_2 \right]dx 
\\[.05in]
&+ \int_{D} \left[u_2 \left( \overline{\phi}_1'' + 4 \omega^2 q \overline{\phi}_1 \right)  + \frac{\omega^2}{x} \chi_2 u_1^2 \,  \overline{\phi}_1 \right] dx.
\end{split}
\end{equation}

\n Using the mean value theorem and the Sobolev embedding $H^2(D) \hookrightarrow C^1(\overline{D})$ one easily concludes that there exists a constant $C>0$ such that

\begin{equation} \label{u/x bounded by H^2 norm}
\left\|\frac{u}{x}\right\|_{L^\infty} \leq C \|u\|_{H^2}, 
\qquad \qquad \forall \ u \in H^2_{\diamond}.
\end{equation}

\n In particular, the integrals containing the nonlinearities in the definition of $\CT^\diamond$ are well-defined, continuous and bounded. Thus, there exists a bounded, continuous and nonlinear map $T^\diamond : X_\diamond \to X_\diamond$ with

\begin{equation} \label{T diamond}
\CT^\diamond(\bm{u}, \bm{\phi}) = \langle T^\diamond (\bm{u}), \bm{\phi} \rangle_{X_\diamond}, \qquad \qquad  \bm{u}, \bm{\phi} \in X_\diamond.
\end{equation}

\n We shall write $T^\diamond_\omega$ to indicate the dependence of the operator $T^\diamond$ on $\omega$. We define $(\omega, \bm{u}) \in \CC \times X_\diamond$ to be a solution of \eqref{trans rad} iff $T^\diamond_\omega(\bm{u}) = 0$.

\begin{theorem}
Theorem~\ref{THM 1}, Corollary~\ref{CORO} and Lemma~\ref{LEM real case} hold for the operator $T^\diamond_\omega$ in the space $X_\diamond$.
\end{theorem}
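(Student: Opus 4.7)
The plan is to observe that \eqref{trans rad} is structurally a one-dimensional problem on $(0,1)$ whose weak form $\CT^\diamond$ differs from the 1D form $\CT$ in \eqref{T cali} in only two respects: a singular weight $1/x$ appears in front of each nonlinear integrand, and the ambient space is $X_\diamond = H^2_\diamond \times L^2$ in place of $X = H^2 \times L^2$. The linear parts of $\CT^\diamond$ and $\CT$ are identical as bilinear forms on the relevant spaces, so the whole difficulty is confined to the two nonlinear terms. My strategy is therefore to revisit each of Theorem~\ref{THM 1}, Corollary~\ref{CORO}, and Lemma~\ref{LEM real case}, and show that every step ports over once the $1/x$ weight has been absorbed by the Hardy-type inequality \eqref{u/x bounded by H^2 norm}.

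Concretely, for the nonlinear integrand paired with $\overline{\phi}_2$ I would factor
\[
\tfrac{1}{x}\, u_2\,\overline{u}_1\,\overline{\phi}_2 \;=\; u_2\cdot\tfrac{\overline{u}_1}{x}\cdot\overline{\phi}_2,
\]
and use \eqref{u/x bounded by H^2 norm} on $u_1 \in H^2_\diamond$ to estimate by $C\,\|\chi_1\|_{L^\infty}\|u_2\|_{L^2}\|u_1\|_{H^2}\|\phi_2\|_{L^2}$. For the integrand paired with $\overline{\phi}_1$ I would factor $(1/x)u_1^2\overline{\phi}_1 = (u_1/x)\,u_1\,\overline{\phi}_1$ and again apply \eqref{u/x bounded by H^2 norm} to bound by $C\,\|\chi_2\|_{L^\infty}\|u_1\|_{H^2}^2\|\phi_1\|_{L^2}$. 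These are exactly the weighted analogues of the nonlinear bounds used in the 1D proof; the only overhead is a universal Hardy constant $C$, which can simply be absorbed into the constant $c_Q(\chi_2)$ appearing in the statement of Theorem~\ref{THM 1}.

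With these estimates in hand, the proof of Theorem~\ref{THM 1} can be run verbatim — the linear part of the argument is unchanged because the differential operators, the coefficient $q$, and the $L^2$-type pairings are identical to those in the 1D setting — yielding the same quantitative non-vanishing statement for $T^\diamond_\omega$; Corollary~\ref{CORO} then follows immediately. For Lemma~\ref{LEM real case}, the 1D argument hinges on the fact that when $u_1$ is real-valued, the nonlinearity $\chi_2 u_1^2$ on the right-hand side of the $u_2$-equation is real and pointwise nonnegative. The weighted counterpart $\chi_2 u_1^2 / x$ retains this sign a.e.\ in $(0,1)$, so the same positivity/testing argument carries through without change.

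I expect the main obstacle to be bookkeeping rather than substance: carefully tracking the Hardy constant through the quantitative statement of Theorem~\ref{THM 1}, and verifying that the boundary conditions recovered from the weak formulation — namely $u_1(0)=0$ (built into $H^2_\diamond$) together with $u_2(0) = u_2(1) = u_2'(1) = 0$ (extracted by elliptic regularity from $\CT^\diamond(\bm{u},\bm{\phi})=0$, using that $\phi_1 \in H^2_\diamond$ only satisfies $\phi_1(0)=0$ and hence $\phi_1(1),\phi_1'(0),\phi_1'(1)$ remain free) — coincide with those exploited in the 1D proofs. Once this verification is made, the three conclusions of the theorem all follow by direct transcription.
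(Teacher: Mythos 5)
Your overall strategy matches the paper's: the paper explicitly states that ``all the arguments apply analogously to $T^\diamond$ in the space $X_\diamond$'' with the $1/x$ weight absorbed by the Hardy-type estimate \eqref{u/x bounded by H^2 norm}, and that is precisely the move you make. However, your claim that ``the proof of Theorem~\ref{THM 1} can be run verbatim'' with ``the linear part of the argument unchanged'' glosses over the one structural change that the paper highlights as the main difference: because $H^2_\diamond$ carries the additional constraint $u(0)=0$, the function $\widetilde{z}_\omega$ (which has $\widetilde{z}_\omega(0)=1$) is ejected from the kernel, so $N^\diamond_\omega$ and $(R^\diamond_\omega)^\perp$ are $1$-dimensional rather than $2$-dimensional (Remark~\ref{REM L diamond N and R^perp}). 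This changes the finite-dimensional non-degeneracy step, Lemma~\ref{LEM proj 0}: the $1$D version rests on a pair of quadratic equations in $\alpha = \alpha_1/\alpha_2$ and a Cauchy--Schwarz discriminant argument, none of which even makes sense when $N^\diamond_0 = \mathrm{span}\{(x,0)\}$ is one-dimensional. The good news, which your proposal should record, is that the $1$-dimensional analogue is \emph{simpler}: a single-line computation (Remark~\ref{REM proj_0 diamond}) gives $\|P_{N_0^\diamond}A^\diamond(\bm{v})\|_{X_\diamond} = 3\sqrt{3}\bigl(\int_D \chi_2 x^2\,dx\bigr)\|\bm{v}\|_{X_\diamond}^2$ directly, and this is exactly why the explicit form $C(\chi_2) = C_0 m$ when $\chi_2 \geq m > 0$ becomes available.

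Downstream of this, the Lyapunov projection equations \eqref{proj omega}--\eqref{proj 0} and the auxiliary lemmas (\ref{LEM PN_omega}, \ref{LEM gamma_omega}, \ref{LEM v_omega - v_0}) are phrased in terms of the orthonormal basis $\{z_\omega, \widetilde{z}_\omega\}$ of $N_\omega$; in the $\diamond$ case these are stated and proved with the single basis vector $z_\omega$, so again the proofs carry over but are not literally word-for-word. Your treatment of the nonlinear estimates and of Lemma~\ref{LEM real case} (positivity of $\chi_2 u_1^2 z_{2\omega}/x$ for real $u_1$ and small real $\omega$) is correct and is exactly what the paper intends. The concluding paragraph about recovering boundary conditions from the weak formulation by elliptic regularity is not needed: the boundary conditions are built into $H^2_\diamond$ and into the choice of bilinear form, and the paper never re-derives them.
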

\n
If we assume a lower bound $\chi_2(x) \geq m > 0$ for a.e. $x \in D$, then we can take $C(\chi_2) = C_0 m$, where $C_0>0$ is an absolute constant (see Remark~\ref{REM proj_0 diamond}). Consequently, the constant $\delta$ (from Corollary~\ref{CORO}) takes the form $\delta = \delta(r, Q, m, M)$.

We will present the proofs for the operator $T$ in the space $X$. All the arguments apply analogously to $T^\diamond$ in the space $X_\diamond$. In fact, in the latter case the arguments are somewhat simpler due to the fact that the linearization of $T^\diamond$ has a 1-dimensional kernel, while that of $T$ has a 2-dimensional kernel (cf. Remark~\ref{REM L diamond N and R^perp}). When appropriate, we will comment on the necessary modifications of a given argument to treat the case of $T^\diamond$.

We should point out that the conjecture \ref{CONJ existence} only applies to the one-dimensional problem (\ref{trans 1d}). 

\subsection{Solvability result in the presence of a force term} \label{SECT source f}
For this set of results, we introduce a function $g \neq 0$ in the first equation of  \eqref{trans}. We will state the result and carry out the proof in the radially symmetric case, i.e., for the problem \eqref{trans rad} and operator $T^\diamond_\omega$. An analogous result holds in the one-dimensional case. More specifically, with $f(x)=xg(x)$, we are considering the problem
\begin{equation} \label{trans rad f}
\begin{cases}
\displaystyle u_1'' + \omega^2 q u_1  = - \frac{\omega^2}{x}  \chi_1 u_2 
 \overline{u}_1 + f \hspace{0.7in} &\text{in} \  D:= (0,\, 1)
\\[.1in]
\displaystyle u_2'' + 4 \omega^2 q u_2  = - \frac{\omega^2}{x} \chi_2 u_1^2 &\text{in} \  D
\\[.1in]
u_1(0) = u_2(0) = u_2(1) = u_2'(1) = 0,
\end{cases}
\end{equation}

\n the weak formulation of which becomes

\begin{equation} \label{surj equation main}
T^\diamond_\omega(\bm{u}) = \bm{f}, \qquad \qquad \bm{f} = (0,f).
\end{equation}

\n In this section we will assume

\begin{equation} \label{chi1 chi2 q exist assum}
\chi_1, \chi_2 \in L^\infty(0,1), \qquad q \in C[0,1]  \qquad \text{and} \qquad 0 \neq f \in L^2(0,1).
\end{equation}
\n Suppose also that $q$ is real-valued. We define $z_\omega, \widetilde{z}_\omega$ as the solutions of the following initial value problems:

\begin{equation} \label{z_omega}
\begin{cases}
z_\omega'' + \omega^2 q z_\omega = 0 \quad \text{in} \ (0,1)
\\
z_\omega(0) = 0 \; \mbox{and} \; z_\omega'(0) = 1
\end{cases}
\qquad 
\begin{cases}
\widetilde{z}_\omega'' + \omega^2 q \widetilde{z}_\omega = 0 \quad \text{in} \ (0,1)
\\
\widetilde{z}_\omega(0) = 1  \; \mbox{and} \;  \widetilde{z}_\omega'(0) = 0.
\end{cases}
\end{equation}

\n Let $a_\omega$  be the solution of $a_\omega'' + \omega^2 q a_\omega = f$ with zero initial conditions $a_\omega(0) = a_\omega'(0) = 0$. More explicitly, 

\begin{equation} \label{a_omega}
a_\omega(x) = \int_0^x \big[ z_{\omega}(x) \widetilde{z}_{\omega}(t) - z_{\omega}(t) \widetilde{z}_{\omega}(x) \big] f(t) \, dt.
\end{equation}
\n We introduce the function 
\begin{equation} \label{zeta}
\zeta(\omega) = \left( \int_0^1 z_\omega  a_\omega z_{2\omega}  \, \frac{\chi_2 dx}{x} \right)^2  -  \int_0^1 z_\omega^2 z_{2\omega} \, \frac{\chi_2 dx}{x} \cdot \int_0^1 a_\omega^2 z_{2\omega} \, \frac{\chi_2 dx}{x},
\end{equation}
and the associated set 
\begin{equation} \label{Lambda}
\Lambda  = \Lambda (f, q, \chi_2) = \left\{ \omega \in \CC: \zeta(\omega) = 0  \right\} \cup \{0\}\subset \CC.
\end{equation}
\n In Lemma~\ref{LEM z_omega properties} and the discussion proceeding it is shown that this function is entire, and thus $\Lambda$ is (at most) countable, with no finite accumulation points for any $f\ne 0$. Finally, for any $\rho, \delta > 0$ (with $\rho$ large and $\delta$ small) we define
\begin{equation} \label{G set}
\CG = \CG_{\rho, \delta} = \left\{ \omega \in \CC: |\omega| \leq \rho \quad \text{and} \quad \text{dist} (\omega, \Lambda) \geq \delta \right\}
\end{equation}

\begin{theorem} \label{THM exist}
Assume \eqref{chi1 chi2 q exist assum} holds. For any $\rho, \delta > 0$ there exists $c = c(\rho, \delta, f, \chi_2, q) > 0$ such that if $\|\chi_1\|_{L^\infty} < c$, then \eqref{surj equation main} has a solution for any $\omega \in \CG_{\rho, \delta}$.
\end{theorem}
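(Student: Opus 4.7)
The plan is to treat (\ref{surj equation main}) as a perturbation of the decoupled problem at $\chi_1=0$. When $\chi_1$ vanishes, the first equation reduces to the linear ODE $u_1'' + \omega^2 q u_1 = f$ with $u_1(0) = 0$, whose solutions form the one-parameter family $u_1 = c z_\omega + a_\omega$ for $c\in\CC$. The second equation, $u_2'' + 4\omega^2 q u_2 = -\omega^2 \chi_2 u_1^2/x$ together with the three boundary conditions $u_2(0)=u_2(1)=u_2'(1)=0$, is overdetermined: multiplying by $z_{2\omega}$, integrating by parts twice, and using $z_{2\omega}(0) = 0$ together with the ODE satisfied by $z_{2\omega}$, solvability collapses to the single scalar compatibility $\int_0^1 z_{2\omega}\chi_2 u_1^2 \,dx/x = 0$. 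Substituting $u_1 = cz_\omega + a_\omega$ turns this into the quadratic $A_\omega c^2 + 2B_\omega c + C_\omega = 0$, where $A_\omega, B_\omega, C_\omega$ are precisely the three integrals appearing in the definition of $\zeta(\omega)$ in \eqref{zeta}, and whose discriminant equals $4\zeta(\omega)$. For $\omega\in\CG_{\rho,\delta}$ one has $\zeta(\omega)\ne 0$, so the quadratic is solvable: if $A_\omega\ne 0$ the two roots are $c_\pm = (-B_\omega\pm\sqrt{\zeta(\omega)})/A_\omega$, and if $A_\omega = 0$ then $\zeta(\omega) = B_\omega^2\ne 0$ forces $B_\omega\ne 0$ and $c = -C_\omega/(2B_\omega)$ works. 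Fixing such $c$ makes the $u_2$-equation consistent and produces a base solution $\bm{u}_0 = (u_1^0,u_2^0)\in X_\diamond$ with $T^\diamond_{\omega,\,\chi_1=0}(\bm{u}_0) = \bm{f}$.

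Next I would establish the invertibility of the linearization $L_\omega := DT^\diamond_{\omega,\,\chi_1=0}(\bm{u}_0)\in\CL(X_\diamond,X_\diamond)$. In strong form this operator is $\bm{v}\mapsto (v_1'' + \omega^2 q v_1,\; v_2'' + 4\omega^2 q v_2 + 2\omega^2\chi_2 u_1^0 v_1/x)$, with the boundary conditions $v_1(0) = 0$ and $v_2(0) = v_2(1) = v_2'(1) = 0$ inherited from $X_\diamond$ and the weak form. Writing $v_1 = \alpha z_\omega + b_\omega$, with $b_\omega$ the particular solution of the first line determined by the data, the overdetermined $v_2$-problem yields, exactly as before, a compatibility condition that is now \emph{linear} in $\alpha$, with leading coefficient $cA_\omega + B_\omega$. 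The algebraic identity $(cA_\omega + B_\omega)^2 = \zeta(\omega)$, which follows immediately from the quadratic satisfied by $c$ and persists in the degenerate case $A_\omega = 0$, shows this coefficient is nonzero on $\CG_{\rho,\delta}$; consequently $\alpha$ is uniquely determined by the data, and then so are $v_1$ and $v_2$. By the analyticity of $\omega\mapsto\zeta(\omega)$ (Lemma~\ref{LEM z_omega properties}) and compactness of $\CG_{\rho,\delta}$, the continuous function $|\zeta|$ attains a strictly positive minimum $\zeta_{\min}(\rho,\delta,f,\chi_2,q)>0$ on $\CG_{\rho,\delta}$, and together with uniform bounds on $z_\omega,\widetilde{z}_\omega,a_\omega$ this yields uniform bounds $\|\bm{u}_0\|_{X_\diamond}\le M_0$ and $\|L_\omega^{-1}\|_{X_\diamond\to X_\diamond}\le\Gamma$ with $M_0,\Gamma$ depending only on $\rho,\delta,f,\chi_2,q$.

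Finally, writing $\bm{u} = \bm{u}_0 + \bm{w}$ and noting that $S_\omega := T^\diamond_{\omega,\,\chi_1=0}$ is a polynomial of degree two in $\bm{u}$, the full equation $T^\diamond_\omega(\bm{u}) = \bm{f}$ rewrites as
\begin{equation*}
L_\omega[\bm{w}] = -Q_\omega(\bm{w},\bm{w}) - K_\omega^{\chi_1}(\bm{u}_0 + \bm{w}),
\end{equation*}
where $Q_\omega$ is the purely quadratic part of $S_\omega$ arising from the $u_1^2$ nonlinearity, and $K_\omega^{\chi_1}(\bm{u})$ is the Riesz representative of $\bm{\phi}\mapsto\int_D (\omega^2/x)\chi_1 u_2\overline{u}_1\overline{\phi}_2\,dx$, a bounded real-quadratic nonlinearity in $\bm{u}$ obeying $\|K_\omega^{\chi_1}(\bm{u})\|_{X_\diamond}\le C\|\chi_1\|_{L^\infty}\|\bm{u}\|_{X_\diamond}^2$ by \eqref{u/x bounded by H^2 norm}. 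Inverting $L_\omega$ and using the uniform bounds $M_0,\Gamma$, one verifies that the map $\bm{w}\mapsto -L_\omega^{-1}[Q_\omega(\bm{w},\bm{w}) + K_\omega^{\chi_1}(\bm{u}_0 + \bm{w})]$ is a contraction on a sufficiently small ball of $X_\diamond$ whenever $\|\chi_1\|_{L^\infty}<c$ with $c = c(\rho,\delta,f,\chi_2,q)>0$ small enough; the Banach fixed point theorem then produces the desired $\bm{u}$.

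The main obstacle is securing the uniform bound $\Gamma$ on $\|L_\omega^{-1}\|$ over the entire set $\CG_{\rho,\delta}$: one has to track the analytic $\omega$-dependence of all ingredients $z_\omega,\widetilde{z}_\omega,a_\omega,A_\omega,B_\omega,C_\omega,\zeta(\omega)$, handle the degenerate configuration $A_\omega = 0$ separately, and extract the quantitative non-degeneracy from $|\zeta(\omega)|\ge\zeta_{\min}$. A subsidiary subtlety is that $T^\diamond_\omega$ is not $\CC$-analytic in $\bm{u}$ because of the conjugate $\overline{u}_1$ in the $\chi_1$-coupling, so $X_\diamond$ must be viewed as a real Banach space; however, since the $\chi_1$-term enters only as a small (real-)quadratic perturbation and the linearization $L_\omega$ that is actually inverted is $\CC$-linear, the fixed-point argument is otherwise unaffected.
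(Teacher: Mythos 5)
Your plan is essentially the paper's proof, just expressed in a slightly different notation: your $T^\diamond_{\omega,\chi_1=0}$ is the paper's operator $F$ in the decomposition $T^\diamond=F+G$, your $\bm{u}_0$ is the paper's $\bm{p}_\omega$, and your $L_\omega$ is the paper's $F'_{\omega,\bm{p}_\omega}$. One genuinely useful simplification you found is the identity $(cA_\omega+B_\omega)^2=\zeta(\omega)$ (valid since $c$ is a root of $A_\omega c^2+2B_\omega c+C_\omega=0$, and surviving the degenerate case $A_\omega=0$); it shows that the two conditions in \eqref{c system} are not independent — the nondegeneracy $k_1(\omega)c+k_2(\omega)\neq 0$ required for invertibility of $F'_{\omega,\bm p_\omega}$ is automatic once $\zeta(\omega)\neq 0$, a point the paper presents as two separate requirements. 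It also gives the quantitative bound $|cA_\omega+B_\omega|\ge\zeta_{\min}^{1/2}$ on the compatibility coefficient in the linearized problem, which the paper extracts only indirectly via a compactness argument.

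The step that is glossed, however, is not a minor one, and it is precisely the step the paper spends most of Lemma~\ref{LEM F(p)=f} on. The claim ``together with uniform bounds on $z_\omega,\widetilde z_\omega,a_\omega$ this yields $\|\bm u_0\|_{X_\diamond}\le M_0$'' is not justified as written. The coefficient $c_\omega$ in $p_1=c_\omega z_\omega+a_\omega$ is a root of a quadratic whose leading coefficient $A_\omega=k_1(\omega)$ can vanish or be arbitrarily small on $\CG_{\rho,\delta}$; of the two roots $c_\pm=(-k_2\pm\sqrt\zeta)/k_1$ only \emph{one} remains bounded as $k_1\to 0$, so a selection has to be made, and making it both bounded \emph{and} continuous in $\omega$ forces the branch bookkeeping for $\sqrt{\zeta(\omega)}$ that the paper encodes in the decomposition $\CG=\bigcup_j\CG_j$ into simply connected pieces. (One can get boundedness without continuity from the rewritten roots $c_\pm=-k_3/(k_2\mp\sqrt\zeta)$ and the elementary estimate $\max(|k_2+\sqrt\zeta|,|k_2-\sqrt\zeta|)\ge|\zeta|^{1/2}\ge\zeta_{\min}^{1/2}$, but you would still need a similar argument for the coefficient $\widetilde c_\omega$ in $p_2$, which requires a uniform lower bound on $|z_{2\omega}(1)|^2+|z_{2\omega}'(1)|^2$ on $\CG$.) Likewise, ``yields $\|L_\omega^{-1}\|\le\Gamma$'' needs either the paper's continuity-plus-compactness contradiction argument, which relies on the continuous selection just discussed, or an explicit estimate of $L_\omega^{-1}$ in terms of $\zeta_{\min}$, $\|\bm u_0\|$, and the uniform ODE bounds. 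You flag this as the main obstacle, correctly; but as written, the proposal asserts the conclusion of that step without supplying the argument.
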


\n The proof of Theorem~\ref{THM exist} is given in Section~\ref{section5}.

\begin{remark} \mbox{}
\normalfont
We note that $\Lambda = \CC$ for $f=0$, and thus Theorem \ref{THM exist} is vacuous in that case.
\end{remark}
\n The set $\Lambda = \Lambda(f,q,\chi_2)$ may contain (countably many) real frequencies. For example, let $q = 1$, then
\begin{equation*}
z_\omega(x) = \frac{\sin(\omega x)}{\omega}, \qquad \qquad
\widetilde{z}_{\omega}(x) = \cos(\omega x).
\end{equation*}
\n Further, let us also take $\chi_2 = 1$ and $f(x)=x$. Figure~1 below shows a plot of the function $\omega^{10} \zeta(\omega)$ given these choices.
\begin{figure}[H]
\centering
\captionsetup{width=.84\linewidth}
\includegraphics[scale=0.5]{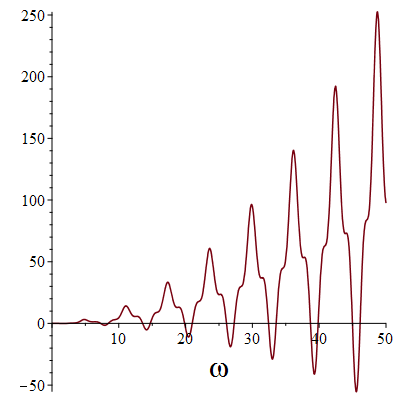}
\caption{The plot of $\omega^{10} \zeta(\omega)$ for $q=\chi_2 =1$ and $f=x$.}
\end{figure}
\subsection{Generalized transmission eigenvalues in the degenerate case $\chi_1=0$}
\label{chi1equal0}
Consider the problem \eqref{trans rad} in the degenerate case $\chi_1=0$. Further, let us assume that $\chi_2 \in L^\infty (0,1)$ is a nonzero function and $q \in C[0,1]$. Notice that in this case the system \eqref{trans rad} decouples and it is straightforward to show that $0 \neq \omega \in \CC$ is an eigenvalue of \eqref{trans rad} iff it is a zero of the following entire function (see Lemma~\ref{LEM z_omega properties} for the proof that this function is entire):

\begin{equation*}
\Upsilon_1(\omega) = \int_0^1 z_\omega^2 z_{2\omega} \frac{\chi_2 dx}{x},
\end{equation*} 

\n where $z_\omega$ is defined by \eqref{z_omega}. Consequently, the eigenvalues form a discrete set in the complex plane (without finite accumulation points). Let us now choose $q=1$, and take $\chi_2 = \mathbbm{1}_{(\frac{1}{2}, 1)}$, i.e., the material that occupies the unit ball $B_1 \subset \RR^3$ is extremely  inhomogeneous, its core $B_{\frac{1}{2}}$ is linear and the nonlinearity is supported near the boundary: in the annulus $B_1 \backslash B_{\frac{1}{2}}$. With these choices there are (countable many) real eigenvalues. Indeed, Figure~\ref{FIG2a} below, shows that $\Upsilon_1$ has (countably many) positive zeros. We mention that the same conclusion remains true as long as $\chi_2$ is supported away from the origin, i.e., for $\chi_2 = \mathbbm{1}_{(c, 1)}$ for any $0<c<1$. However, with the choice $\chi_2 = 1$, the function $\Upsilon_1$ appears to have no positive zeros.

\begin{figure}[h]
\begin{center}
\begin{subfigure}[t]{2.5in}
\includegraphics[scale=0.5]{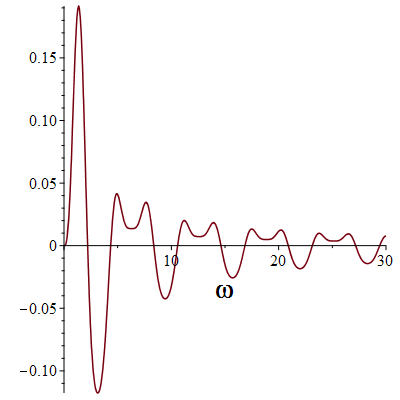}
\caption{The plot of $\omega^3 \Upsilon_1(\omega)$ for 
$q=1$
\\
and $\chi_2 = \mathbbm{1}_{(\frac{1}{2}, 1)}$.}
\label{FIG2a}
\end{subfigure}
\hspace{1in}
\begin{subfigure}[t]{2.5in}
\includegraphics[scale=0.5]{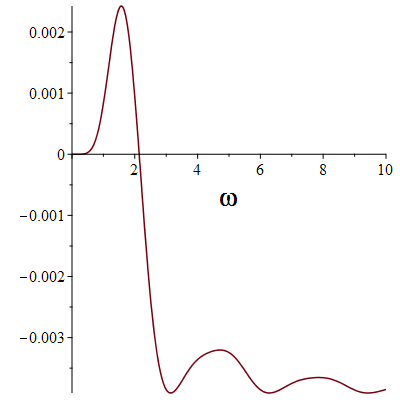}
\caption{The plot of $\omega^6 \Upsilon_2(\omega)$ for $q=1$
\\
and $\chi_2 = 1$.}
\label{FIG2b}
\end{subfigure}
\caption{}
\end{center}
\end{figure}

\n In the 1d case, i.e., for the problem \eqref{trans 1d} the situation is interestingly different. Substituting $u_1 = \alpha \, z_\omega + \beta \, \widetilde{z}_\omega$ into the equation for $u_2$, we find that $0\neq \omega \in \CC$ is an eigenvalue of \eqref{trans 1d} iff

\begin{equation} \label{1d chi1=0 existence cond}
\begin{cases}
a_1 \alpha^2 + b_1 \alpha \beta + c_1 \beta^2 = 0,
\\
a_2 \alpha^2 + b_2 \alpha \beta + c_2 \beta^2 = 0,
\end{cases}
\qquad \text{for some} \ (\alpha, \beta) \in \CC^2 \backslash \{0\},
\end{equation}

\n where

\begin{equation*}
\begin{cases}
\displaystyle a_1(\omega) = \int_0^1 z_\omega^2 \, z_{2\omega} \, \chi_2 dx, 
\\[.15in]
\displaystyle b_1(\omega) = 2 \int_0^1 z_\omega \, \widetilde{z}_\omega \, z_{2\omega}  \chi_2 dx, 
\\[.15in]
\displaystyle c_1(\omega) = \int_0^1 \widetilde{z}_\omega^2 \, z_{2\omega} \chi_2 dx
\end{cases}
\qquad \text{and} \qquad
\begin{cases}
\displaystyle a_2(\omega) = \int_0^1 z_\omega^2 \, \widetilde{z}_{2\omega} \chi_2 dx, 
\\[.15in]
\displaystyle b_2(\omega) = 2 \int_0^1 z_\omega \, \widetilde{z}_\omega \, \widetilde{z}_{2\omega} \chi_2 dx, 
\\[.15in]
\displaystyle c_2(\omega) = \int_0^1 \widetilde{z}_\omega^2 \, \widetilde{z}_{2\omega} \chi_2 dx.    
\end{cases}
\end{equation*}

\n It is then straightforward to show that \eqref{1d chi1=0 existence cond} is equivalent to $\Upsilon_2 (\omega) = 0$, where

\begin{equation*}
\Upsilon_2 = \left( a_1 c_2 - a_2 c_1 \right)^2 - (a_1 b_2 - a_2 b_1) (b_1 c_2 - b_2 c_1).
\end{equation*}

\n Discreteness of eigenvalues again follows from the fact that $\Upsilon_2$ is an entire function. Let us now take $q=1$. Figure~\ref{FIG2b} shows that $\Upsilon_2$ has one positive zero for $\chi_2 = 1$. Further numerical plots show that for the choice $\chi_2 = \mathbbm{1}_{(c, 1)}$ (with $0<c<1$ arbitrary) the function $\Upsilon_2$ has a similar shape as that in Figure~\ref{FIG2b}, i.e., it continues to have only one positive zero.

\section{Proof of Lemma~\ref{LEM real case}} \label{SECT real case}
\setcounter{equation}{0}

The proof of Lemma~\ref{LEM real case} is based on the following lower bound for the nonlinear operator $T$ \eqref{T Riesz}: for any $\bm{u}=(u_1, u_2) \in X$ and $\omega \in \CC$

\begin{equation} \label{T lower bound}
\|T_\omega(\bm{u})\|_X \geq \frac{|\omega|^2}{\|z_{2\omega}\|_{H^2}} \, \left| \int_0^1 \chi_2 u_1^2 \ z_{2\omega} \, dx \right|,
\end{equation}

\n where $z_\omega$ is defined by \eqref{z_omega}. To prove this bound we use that

\begin{equation*}
\|T(\bm{u})\|_X = \sup_{\|\bm{\phi}\|_X=1} \left| \langle T(\bm{u}), \bm{\phi} \rangle_X  \right|.
\end{equation*}

\n Now, make the choice

\begin{equation*}
\bm{\phi} = (\phi_1, 0), \qquad \qquad \phi_1 = \frac{z_{-2\overline{\omega}}}{\|z_{-2\overline{\omega}}\|_{H^2}} 
\end{equation*}

\n and use the fact that $\overline{\phi}_1'' + 4 \omega^2 q \overline{\phi}_1 = 0$. Consequently,

\begin{equation*}
\langle T(\bm{u}), \bm{\phi} \rangle_X = \frac{\omega^2}{\|z_{-2\overline{\omega}}\|_{H^2}} \,  \int_0^1 \chi_2 u_1^2 \ \overline{z_{-2\overline{\omega}}} \, dx. 
\end{equation*}

\n  To conclude the proof of \eqref{T lower bound}, it remains to note that

\begin{equation} \label{z sym}
\overline{z_{-2\overline{\omega}}} = z_{2\omega},
\end{equation}

\n which directly follows from the definition of $z_\omega$ \eqref{z_omega} and the fact that $q$ is real-valued (or more generally, when $q$ is also $\omega$-dependent and complex-valued, we use the symmetry assumption \eqref{q symmetry}). In Lemma~\ref{LEM y} of the Appendix, we show that as $\omega \to 0$

\begin{equation*}
z_\omega(x) = x + o(1), \qquad \qquad z_\omega'(x) = 1 + o(1),
\end{equation*}
\n uniformly in $x$. In particular, for real $\omega$  there exists $\delta = \delta(q) > 0$ such that $z_{2\omega}(x) >0$ for a.e. $x \in (0,1)$ and $0\leq \omega \leq \delta$.

Suppose now, for the sake of a contradiction, that Lemma~\ref{LEM real case} is not true, i.e. , $T_\omega(\bm{u}) = 0$ for some $\omega \in (0, \delta]$ and $\bm{u} \neq 0$ with $u_1$ real valued. Then \eqref{T lower bound} implies

\begin{equation*}
\int_0^1 \chi_2 u_1^2 \, z_{2\omega} dx = 0.
\end{equation*}

\n As all the functions inside the integral are non-negative, we conclude that the integrand is zero and hence $u_1 = 0$ a.e. in $(0,1)$. From the definition of $T$ we then immediately also find that $u_2= 0$. In combination $\bm{u} = 0$, which is a contradiction.

\begin{remark}
\normalfont When $q=1$ we have $z_{2\omega} = \sin(2\omega x)/2\omega$ and this function is positive a.e. in $(0,1)$ provided $\omega \leq \frac{\pi}{2}$. Consequently, we may take $\delta = \frac{\pi}{2}$.
\end{remark}

\bigskip 
In the next section we discuss the Fredholm property of the linearization of the nonlinear system, which is an intermediate tool needed throughout our remaining proofs. 

\section{The Fredholm property of linearization} \label{SECT weak form and Fredholm}
\setcounter{equation}{0}

In this paper we use the following norm and inner product on the space $H^2$

\begin{equation} \label{H^2 norm}
\|u\|_{H^2}^2 = \|u\|_{L^2}^2 + \|u''\|_{L^2}^2,
\qquad \qquad
\langle u,v \rangle_{H^2} = \langle u,v \rangle_{L^2} + \langle u'', v'' \rangle_{L^2}.
\end{equation}

\n This norm is equivalent to the standard $H^2$-norm involving $u'$ as well. Let $T$ be the operator from \eqref{T Riesz} and consider its linear part $L=L_\omega : X \to X$ defined by

\begin{equation} \label{L}
\langle L (\bm{u}), \bm{\phi} \rangle_X = 
\int_{D} \left( u_1'' + \omega^2 q u_1 \right) \overline{\phi}_2 dx + \int_{D} u_2 \left( \overline{\phi}_1'' + 4 \omega^2 q \overline{\phi}_1 \right) dx.
\end{equation}

\n The operator $L$ is in fact the Frech{\'e}t derivative of $T$ at $\bm{u} = 0$. Indeed, $T(0) = 0$ and as $\bm{u} \to 0$ in $X$ we have

\begin{equation*}
T(\bm{u}) = L(\bm{u}) + o(\|\bm{u}\|_X), 
\end{equation*}

\n consequently

\begin{equation*}
\partial_{\bm{u}} T \rvert_{\bm{u}=0} = L.
\end{equation*}

\n However, we mention that $T$ is not Frech{\'e}t differentiable at any $\bm{u} \neq 0$, because one of the nonlinearities contains complex conjugation. We now state the main result of this section:

\begin{theorem} \label{THM L}
Let $\omega \in \CC$ and $L = L_\omega: X \to X$ be given by \eqref{L}. Assume $q$ satisfies \eqref{q assumption}. Then, $L$ is a Fredholm operator of index 0, its kernel and the orthogonal complement of its range in $X$ are $2$-dimensional spaces given by

\begin{equation} \label{N and R^perp L}
\begin{split}
N_\omega:=N(L_\omega) &= \left\{ (u, 0) : u \in H^2 \quad \text{and} \quad u'' + \omega^2 q u = 0 \right\}
\\[.05in]
R_\omega^\perp :=R(L_\omega)^\perp &= \left\{ (u, 0) : u \in H^2 \quad \text{and} \quad  u'' + 4 \overline{\omega}^2 q u = 0 \right\} 
\end{split}
\end{equation}
  
\end{theorem}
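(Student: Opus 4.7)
The plan is to exploit the block anti-diagonal structure of $L$: writing $\bm{u}=(u_1,u_2)$ and unpacking the Riesz identity in \eqref{L}, one sees that $L(u_1,u_2)=(Pu_2,\,Qu_1)$, where $Q: H^2 \to L^2$ is the classical differential operator $Qu=u''+\omega^2 q u$ and $P: L^2 \to H^2$ is the operator whose image is defined by $\langle Pu_2,\phi_1\rangle_{H^2}=\int_D u_2(\overline{\phi_1}''+4\omega^2 q\,\overline{\phi_1})\,dx$ for all $\phi_1\in H^2$. Setting $\widetilde{Q}\phi=\phi''+4\overline{\omega}^2 q \phi$ as a map $H^2\to L^2$, this definition can be rewritten as $\langle Pu_2,\phi_1\rangle_{H^2}=\langle u_2,\widetilde{Q}\phi_1\rangle_{L^2}$, which identifies $P$ with the Hilbert-space adjoint $\widetilde{Q}^*$. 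The entire proof then reduces to studying the Fredholm properties of $Q$ and $\widetilde{Q}$.

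Next I would show that $Q$ is Fredholm of index $2$ (the same argument gives the analogous statement for $\widetilde{Q}$). Decomposing $Q=D^2+\omega^2 M_q$, where $D^2u=u''$ and $M_qu=qu$, the map $D^2:H^2\to L^2$ is surjective with two-dimensional kernel spanned by $1$ and $x$, hence Fredholm of index $2$. The multiplication operator $M_q:H^2\to L^2$ factors as $H^2\hookrightarrow L^2\xrightarrow{q\cdot}L^2$, and since the Sobolev embedding $H^2(D)\hookrightarrow L^2(D)$ is compact in a bounded 1-D interval, $M_q$ is compact. Hence $Q$ is Fredholm of index $2$. A direct ODE argument identifies $N(Q)=\{u\in H^2:u''+\omega^2 qu=0\}$, which is genuinely two-dimensional since under the $C^1$ assumption on $q$ (in fact $L^\infty$ would suffice) the ODE has a 2-dimensional solution space automatically contained in $H^2$; combined with index $2$, this forces $R(Q)=L^2$.

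To conclude, I appeal to the standard Hilbert-space duality for Fredholm operators: $P=\widetilde{Q}^*$ is Fredholm of index $-2$, with $N(P)=R(\widetilde{Q})^\perp=\{0\}$ (since $\widetilde{Q}$ is surjective) and $R(P)=N(\widetilde{Q})^\perp$ (since $R(\widetilde{Q})$ is closed). Plugging these pieces into the block anti-diagonal decomposition of $L$ yields $N(L)=N(Q)\times N(P)=N(Q)\times\{0\}$, which is exactly $N_\omega$; moreover $R(L)=R(P)\times R(Q)=N(\widetilde{Q})^\perp\times L^2$ is closed, with $R(L)^\perp=N(\widetilde{Q})\times\{0\}$, matching $R_\omega^\perp$. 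Both subspaces are two-dimensional, so $L$ has Fredholm index $0$. The only mildly delicate step is the identification $P=\widetilde{Q}^*$, which requires carefully tracking that the defining Riesz identity is an equation between inner products on \emph{different} Hilbert spaces; everything else is routine once the compact Sobolev embedding in one dimension is invoked.
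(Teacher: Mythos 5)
Your argument is correct and takes a genuinely different route from the paper. The paper splits $L=L_0+\omega^2K$, where $L_0$ is the ``constant-coefficient'' anti-diagonal part (with the $q$-terms removed) and $K$ collects the zeroth-order terms; it then proves directly that $L_0$ is self-adjoint and has closed range (via a weak-compactness contradiction argument) and that $K$ is compact (via a somewhat involved integration-by-parts estimate), and concludes by invariance of the Fredholm index under compact perturbation. You instead exploit the anti-diagonal block structure $L(u_1,u_2)=(Pu_2,Qu_1)$ head-on, reduce everything to the single scalar operator $Q=D^2+\omega^2 M_q:H^2\to L^2$ (and its cousin $\widetilde Q$), show $Q$ is Fredholm of index $2$ by noting that $D^2:H^2\to L^2$ is already a surjection with two-dimensional kernel while $M_q$ is compact, and then read off all the kernel/range information for $L$ from the identification $P=\widetilde Q^{\,*}$ together with standard Hilbert-space duality ($N(A^*)=R(A)^\perp$, $R(A^*)=N(A)^\perp$ when $R(A)$ is closed, and $\operatorname{ind}(A^*)=-\operatorname{ind}(A)$). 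Your decomposition $D^2+\omega^2 M_q$ makes the compactness of the perturbation essentially immediate (a bounded multiplication composed with the compact embedding $H^2\hookrightarrow L^2$ in one dimension), whereas the paper's compactness Lemma requires more work; on the other hand the paper's argument is self-contained at the level of Hilbert-space basics, while yours leans on the scalar Fredholm/adjoint package. Both are valid; the anti-diagonal factorization also makes the index-$0$ count $2-2=0$ more transparent, since the two ``$\pm2$'' contributions from $Q$ and $P=\widetilde Q^{\,*}$ visibly cancel. One small point worth stating explicitly in a write-up: the identification $P=\widetilde Q^{\,*}$ uses that $q$ is real-valued, so that $\overline{4\overline{\omega}^2 q\phi_1}=4\omega^2 q\overline{\phi_1}$ — you implicitly use this, and it is exactly the hypothesis \eqref{q assumption}.
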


\vspace{.1in}

\begin{remark} \label{REM L diamond N and R^perp}
\normalfont
For the operator $T^\diamond: X_\diamond \to X_\diamond$ \eqref{T diamond}, its linearization $L^\diamond: X_\diamond \to X_\diamond$ is given by the same formula \eqref{L}. Furthermore, the above theorem still holds for $L^\diamond$ with $X_\diamond$ in place of $X$, and with $H^2$ replaced by $H^2_\diamond$ in the formula \eqref{N and R^perp L}. In particular, the kernel $N^\diamond_\omega$ and the orthogonal complement of the range $(R^\diamond_\omega)^\perp$ of $L^\diamond$ are now 1-dimensional. 
\end{remark}

\vspace{.2in}

Let us start by verifying \eqref{N and R^perp L}. If $L(\bm{u}) = 0$, using the definition \eqref{L} and choosing $\phi_1 = 0$ and $\phi_2 = u_1'' + \omega^2 q u_1 \in L^2$, we conclude that $u_1'' + \omega^2 q u_1 = 0$ in $D=(0,1)$. Similarly, choosing $\phi_2=0$ and $\phi_1 \in H^2$ to be a solution of $\phi_1'' + 4 \overline{\omega}^2 q \phi_1 = u_2$, we conclude that $u_2=0$. This proves the inclusion $\subset$ of the first formula of \eqref{N and R^perp L}. The other inclusion is trivial. To prove the second formula we just observe  that the adjoint $L^*:X \to X$ is given by

\begin{equation} \label{L*}
\langle L^* (\bm{u}), \bm{\phi} \rangle_X = 
\int_{D} \left( u_1'' + 4 \overline{\omega}^2 q u_1 \right) \overline{\phi}_2 dx + \int_{D} u_2 \left( \overline{\phi}_1'' + \overline{\omega}^2 q \overline{\phi}_1 \right) dx,
\end{equation}

\n and that

\begin{equation*}
R(L)^\perp = N(L^*).
\end{equation*}

To prove the Fredholm property, let us write

\begin{equation*}
L = L_0 + \omega^2 K,
\end{equation*}

\n where the operators $L_0, K : X \to X$ are given by 

\begin{equation} \label{L0 and K}
\langle L_0 (\bm{u}), \bm{\phi} \rangle_X = 
\int_{D} \left(  u_1'' \overline{\phi}_2 + u_2 \overline{\phi}_1'' \right) dx,
\qquad \qquad
\langle K (\bm{u}), \bm{\phi} \rangle_X = 
\int_{D} \left( q u_1 \overline{\phi}_2 + 4 q u_2 \overline{\phi}_1 \right) dx.
\end{equation}

\n In the two lemmas below we show that $L_0$ is a Fredholm operator with index 0 and $K$ is a compact operator. Since a compact perturbation of a Fredholm operator with index 0 stays Fredholm with index 0 \cite{kato95} this concludes the proof of Theorem~\ref{THM L}.

\begin{lemma} \label{LEM L0 Fredholm}
$L_0: X \to X$ given by \eqref{L0 and K} is a self-adjoint, Fredholm operator with index 0.
\end{lemma}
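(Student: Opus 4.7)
The plan is to recast $L_0$ in block-operator form via the Riesz representation, reducing the whole statement to properties of a single, very simple one-variable operator. Let $B: H^2 \to L^2$ denote the bounded operator $Bu = u''$, and let $B^*: L^2 \to H^2$ be its Hilbert-space adjoint. Unfolding the definition \eqref{L0 and K} against the inner products \eqref{X inner product} and \eqref{H^2 norm} shows that, for $\bm{u} = (u_1, u_2) \in X$,
$$L_0(\bm u) = (B^* u_2, \, B u_1), \qquad \text{i.e.} \qquad L_0 = \begin{pmatrix} 0 & B^* \\ B & 0 \end{pmatrix}.$$
Self-adjointness $L_0 = L_0^*$ is immediate from this block form.

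The key observation is that $B: H^2 \to L^2$ is surjective. Given $g \in L^2(0,1)$, the explicit antiderivative $u(x) = \int_0^x (x - t) g(t)\, dt$ satisfies $u'' = g$ with $u(0) = u'(0) = 0$, and a one-line Cauchy--Schwarz estimate gives $\|u\|_{H^2} \leq C \|g\|_{L^2}$. This single fact has two immediate consequences: first, $\ker B^* = (\mathrm{Im}\, B)^\perp = \{0\}$; second, since $\mathrm{Im}(B) = L^2$ is closed, the closed range theorem yields $\mathrm{Im}(B^*) = (\ker B)^\perp$ closed in $H^2$.

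The block structure now decouples the kernel and range computations: $\ker L_0 = \ker B \times \ker B^*$ and $\mathrm{Im}(L_0) = \mathrm{Im}(B^*) \times \mathrm{Im}(B)$. Since $\ker B = \{a + bx : a,b \in \CC\}$ has dimension $2$ and $\ker B^* = \{0\}$, we obtain $\dim \ker L_0 = 2$. On the range side, $\mathrm{Im}(L_0) = (\ker B)^\perp \times L^2$ is closed in $X$ with codimension $2$. Therefore $L_0$ is Fredholm, and self-adjointness forces the index to equal $\dim \ker L_0 - \dim \ker L_0^* = 0$.

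There is really only one nontrivial step, the surjectivity of $B$, and in one dimension this is completely elementary. The same line of reasoning transfers verbatim to $L_0^\diamond$ on $X_\diamond$, upon replacing $B$ by its restriction to $H^2_\diamond$: this restriction remains surjective (the antiderivative $u$ above already vanishes at $0$), and its kernel collapses to the one-dimensional space $\{bx : b \in \CC\}$, in agreement with Remark~\ref{REM L diamond N and R^perp}.
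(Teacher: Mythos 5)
Your proof is correct, and it takes a genuinely different and arguably cleaner route than the paper's. The paper proves Lemma~\ref{LEM L0 Fredholm} by establishing the coercivity estimate $\|L_0(\bm{u})\|_X \geq c\|\bm{u}\|_X$ for all $\bm{u}\in N_0^\perp$, which is obtained by a contradiction argument: extract a weakly convergent subsequence from a normalized sequence with $L_0(\bm{u}^n)\to 0$, show its weak limit must be $0$, and then upgrade to strong convergence using the two separate scalar conditions encoded in the definition of $L_0$ (including an explicit construction of test functions $\phi_n$ with $\phi_n''=u_2^n$). Closedness of the range then follows from this lower bound, and combined with the dimension count from \eqref{N and R^perp L} at $\omega=0$ gives Fredholm of index $0$. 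You instead recognize the off-diagonal block structure $L_0 = \begin{pmatrix} 0 & B^* \\ B & 0\end{pmatrix}$ with $B=\partial_x^2 : H^2 \to L^2$, reduce everything to the elementary surjectivity of $B$ (witnessed by the explicit antiderivative), and then invoke the closed range theorem once for the scalar operator $B$. This decouples the kernel and range computations completely and eliminates the need for any weak-compactness argument. The paper's method is the more robust and commonly deployed one (it would survive perturbations that destroy the exact block structure); yours exploits the special algebraic form of $L_0$ to give a shorter, more transparent proof. Both correctly carry over to $L_0^\diamond$, as you note.
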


\begin{proof}
The self-adjointness is obvious. Let $N_0$ and $R_0$ denote the kernel and the range of the operator $L_0$, respectively. We will show that the range $R_0 \subset X$ is closed. This will imply that $X = R_0 \oplus R_0^\perp$. But, from \eqref{N and R^perp L} with $\omega = 0$ we know that $N_0$ and $R_0^\perp$ are both 2-dimensional, therefore $L_0$ is Fredholm with index 0. 

To prove that the range is closed, let us split $X$ into the direct sum $X = N_0 \oplus N_0^\perp$. It is enough to show that there exists $c>0$, such that

\begin{equation} \label{lower bound L0}
\|L_0(\bm{u})\|_X \geq c \|\bm{u}\|_X, \qquad \qquad \forall \ \bm{u} \in N_0^\perp.
\end{equation}

\n Indeed, suppose $\{\bm{u}^n\} \subset X $ is such that $L_0(\bm{u}^n) \to \bm{y}$ in $X$ for some $\bm{y} \in X$. Let us split $\bm{u}^n = \bm{v}^n + \bm{w}^n$, where $\bm{v}^n \in N_0$ and $\bm{w}^n \in N_0^\perp$. Then $L_0(\bm{u}^n) = L_0(\bm{w}^n) \to \bm{y}$. Using \eqref{lower bound L0} we get

\begin{equation*}
\|L_0(\bm{w}^n) - L_0(\bm{w}^m)\|_X \geq c \|\bm{w}^n - \bm{w}^m\|_X, 
\end{equation*}

\n which shows that $\{\bm{w}^n\}$ is Cauchy and hence convergent: $\bm{w}^n \to \bm{w}$ for some $\bm{w} \in X$. Thus, we conclude that $\bm{y}$ is in the range of $L_0$, as $\bm{y} = L_0(\bm{w})$.

To prove \eqref{lower bound L0}, assume that this inequality is not true, then there exists a sequence $\{\bm{u}^n\} \subset N_0^\perp$ with $\|\bm{u}^n\|_X = 1$, such that $L_0(\bm{u}^n) \to 0$ in $X$. Extract a weakly convergent subsequence, which we do not relabel, such that $\bm{u}^n \rightharpoonup \bm{u}$, for some $\bm{u} = (u_1, u_2) \in X$. We also write $\bm{u}^n = (u_1^n, u_2^n)$. Specifically,

\begin{equation*}
u_1^n \rightharpoonup u_1 \quad \text{in} \quad H^2
\qquad \qquad \text{and} \qquad \qquad
u_2^n \rightharpoonup u_2 \quad \text{in} \quad L^2.
\end{equation*}

\n For any $\bm{\phi} \in X$, we can use the definition of weak convergence and \eqref{L0 and K} to conclude that  

\begin{equation*}
0 = \lim_{n \to \infty} \langle L_0 (\bm{u}^n), \bm{\phi} \rangle_X  = \langle L_0 (\bm{u}), \bm{\phi} \rangle_X.
\end{equation*}

\n In other words $L_0 (\bm{u}) = 0$, or equivalently, $\bm{u} \in N_0$. On the other hand $N_0^\perp$ is weakly closed, hence $\bm{u} \in N_0^\perp$. Thus, we obtain that $\bm{u} = 0$. To arrive at a contradiction we are going to show that, in fact, we have strong convergence $\bm{u}^n \to 0$ in $X$. 

The convergence $L_0(\bm{u}^n) \to 0$ implies that

\begin{equation*}
\sup_{\|\bm{\phi}\|_X \leq 1} \left| \langle L_0 (\bm{u}^n), \bm{\phi} \rangle_X  \right| \longrightarrow 0.
\end{equation*}

\n In particular, writing $\bm{\phi} = (\phi, \psi)$ and using the definition of $L_0$ we obtain

\begin{equation} \label{convergence sup}
\sup_{\|\phi\|_{H^2} \leq 1} \left| \int_{D} u_2^n \ \overline{\phi}'' dx \right| \longrightarrow 0,
\qquad \qquad
\sup_{\|\psi\|_{L^2} \leq 1} \left| \int_{D} (u_1^n)'' \ \overline{\psi} dx \right| \longrightarrow 0.
\end{equation}

\n The second relation of \eqref{convergence sup} clearly implies that $(u_1^n)'' \to 0$ in $L^2$. Consequently, $u_1^n \to 0$ in $H^2$ (since we already know that $u_1^n \to 0$ in $L^2$, due to the weak convergence in $H^2$).

It remains to prove $u_2^n \to 0$ in $L^2$. To that end let $\phi_n \in H^2$ be such that $\phi_{n}''= u^n_2$. Specifically, let us take 

\begin{equation*}
\phi_{n}(x) = \int_0^x \int_0^t u^n_2(s) ds dt, 
\end{equation*}

\n then it is clear that for some constant $C>0$, that is independent of $n$,

\begin{equation*}
\|\phi_{n}\|_{H^2} \leq C \|u^n_2\|_{L^2}.
\end{equation*}

\n Since $\{u^n_2\} \subset L^2$ is bounded, we conclude that $\{\phi_{n}\} \subset H^2$ is also bounded and, using the first equation of \eqref{convergence sup} we deduce that $u^n_2 \to 0$ in $L^2$.  
\end{proof}

\begin{lemma} \label{LEM K compact}
Assume $q$ satisfies \eqref{q assumption}, then $K: X \to X$ given by \eqref{L0 and K} is a compact operator.
\end{lemma}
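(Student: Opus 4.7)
The plan is to show that $K$ is compact by verifying that it is weak-to-strong continuous: if $\bm{u}^n \rightharpoonup 0$ in $X$, then $K(\bm{u}^n) \to 0$ in $X$ (linearity then reduces the general case to this one). By duality,
\begin{equation*}
\|K(\bm{u}^n)\|_X = \sup_{\|\bm{\phi}\|_X \leq 1} \left| \int_D q u_1^n \overline{\phi}_2 \, dx + 4 \int_D q u_2^n \overline{\phi}_1 \, dx \right|,
\end{equation*}
and it suffices to show that each of the two terms in the supremum tends to $0$ uniformly in $\bm{\phi}$ with $\|\bm{\phi}\|_X \leq 1$.

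For the first term, I would use the compact embedding $H^2(D) \hookrightarrow L^2(D)$: the weak convergence $u_1^n \rightharpoonup 0$ in $H^2$ upgrades to strong convergence $u_1^n \to 0$ in $L^2$, so by Cauchy--Schwarz
\begin{equation*}
\left| \int_D q u_1^n \overline{\phi}_2 \, dx \right| \leq \|q\|_{L^\infty} \|u_1^n\|_{L^2} \|\phi_2\|_{L^2} \longrightarrow 0
\end{equation*}
uniformly over $\|\phi_2\|_{L^2} \leq 1$.

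The second term is the main obstacle, since $u_2^n$ only converges weakly in $L^2$ and we are pairing it against $q\phi_1$, where $\phi_1$ ranges over the unit ball of $H^2$. The idea is to view this as a duality pairing in $L^2$ and exploit the assumption $q \in C^1[0,1]$ to gain a derivative: writing $(q\phi_1)' = q'\phi_1 + q\phi_1'$, both terms are bounded in $L^2$ by $\|\phi_1\|_{H^2}$ (using that $q, q' \in L^\infty$), so the multiplication map $\phi_1 \mapsto q\phi_1$ sends $H^2$ boundedly into $H^1$. Composing with the compact embedding $H^1(D) \hookrightarrow L^2(D)$, the set $\{q\phi_1 : \|\phi_1\|_{H^2} \leq 1\}$ is precompact in $L^2$. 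Since weak convergence in $L^2$ is uniform on precompact subsets of $L^2$, we conclude
\begin{equation*}
\sup_{\|\phi_1\|_{H^2} \leq 1} \left| \int_D q u_2^n \overline{\phi}_1 \, dx \right| = \sup_{\|\phi_1\|_{H^2} \leq 1} \left| \langle u_2^n, q\phi_1 \rangle_{L^2} \right| \longrightarrow 0.
\end{equation*}

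Combining both estimates yields $\|K(\bm{u}^n)\|_X \to 0$, proving compactness. The only place the hypothesis \eqref{q assumption} (in particular $q \in C^1$, not merely $L^\infty$) is genuinely used is in the second term, to shift a derivative onto $q$; if $q$ were only $L^\infty$, one could not obtain the gain of regularity that converts $\{q\phi_1\}$ into a precompact family in $L^2$.
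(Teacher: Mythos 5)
Your proof is correct, and it takes a genuinely different route from the paper's. The paper treats the troublesome pairing $\int_D q\, u_2^n\, \overline{\phi}_1\,dx$ by an explicit construction: it introduces the double antiderivative $v_n$ solving the Neumann problem $v_n'' = u_2^n - a_n$, integrates by parts to move a derivative onto $q\overline{\phi}_1$ (producing $q\overline{\phi}_1' + q'\overline{\phi}_1$), and then shows $v_n' \to 0$ in $L^2$ via pointwise convergence plus dominated convergence. You instead invoke an abstract functional-analytic fact -- weak convergence in a Hilbert space is uniform on precompact subsets -- and reduce to showing that the test set $\{q\phi_1 : \|\phi_1\|_{H^2}\le 1\}$ is precompact in $L^2$. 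Both arguments are valid, and yours is shorter and conceptually cleaner, at the cost of invoking the uniform-weak-convergence-on-precompacta fact (which you should at least state as a lemma or cite, since it is the load-bearing step).

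One remark on your closing sentence: you assert that $q\in C^1$ is genuinely needed for your route because without it one cannot ``convert $\{q\phi_1\}$ into a precompact family in $L^2$.'' That is not quite right. Your chain is $H^2 \xrightarrow{\times q} H^1 \hookrightarrow L^2$, which does use $q\in C^1$, but a simpler chain $H^2 \hookrightarrow L^2 \xrightarrow{\times q} L^2$ also yields precompactness of $\{q\phi_1 : \|\phi_1\|_{H^2}\le 1\}$ in $L^2$: the unit ball of $H^2$ is already precompact in $L^2$ by the compact Sobolev embedding, and multiplication by any $q\in L^\infty$ is a bounded linear (hence continuous) map on $L^2$, so it sends precompact sets to precompact sets. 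Thus your abstract approach actually proves compactness of $K$ under the weaker hypothesis $q\in L^\infty$, whereas the paper's integration-by-parts argument genuinely requires a derivative of $q$. This is a point in favor of your route, not a limitation of it.
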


\begin{proof}
Let us show that $K$ maps a weakly convergent sequence to a strongly convergent one. $K$ is a sum of two terms, compactness of the first term is obvious, as $u_1^n \rightharpoonup 0$ in $H^2$ implies strong convergence in $L^2$. Let us prove the compactness of the second term, so let $u_n:=u_2^n \rightharpoonup 0$ in $L^2$. In particular, there exists a constant $c_0>0$, such that $\|u_n\|_{L^2} \leq c_0$. We need to prove

\begin{equation} \label{sup to 0}
\sup_{\|\phi\|_{H^2} \leq 1} \left| \int_{D} q u_n \overline{\phi} dx \right| \longrightarrow 0.
\end{equation}

\n Consider the average of $u_n$ over $D = (0,1)$, i.e.,

\begin{equation*}
a_n = \int_D u_n(x) dx.
\end{equation*}

\n The weak convergence shows that $a_n \to 0$. Let us now introduce the function

$$v_n(x) = \int_0^x \int_0^t \left(u_n(s) - a_n \right)ds dt.$$

\n Note that $v_n$ solves the following Neumann problem:

\begin{equation*}
\begin{cases}
v_n'' = u_n -a_n
\\
v_n'(0) = v_n'(1) = 0.
\end{cases}
\end{equation*}

\n Therefore, integrating by parts and using the boundary conditions we can write

\begin{equation*}
\int_{D} q u_n \overline{\phi} dx = \int_{D} q (v_n'' + a_n) \overline{\phi} dx = \int_{D} v_n' \left(q \, \overline{\phi}' + q' \, \overline{\phi} \right) dx + a_n \int_D q \overline{\phi} dx .
\end{equation*}

\n Consequently, for some constant $C>0$, that depends on $L^\infty$-norms of $q$ and $q'$, we get

\begin{equation*}
\sup_{\|\phi\|_{H^2} \leq 1} \left| \int_{D} q u_n \overline{\phi} dx \right| \leq C \left( \|v_n'\|_{L^2} + |a_n| \right).
\end{equation*}

\n It remains to show that $v_n' \to 0$ in $L^2$. Note that

\begin{equation*}
v_n'(x) = \int_0^x u_n(s) ds - a_n x.
\end{equation*}

\n The weak convergence $u_n \rightharpoonup 0$ in $L^2$ implies the pointwise convergence $v_n'(x) \to 0$ for any $x\in D$. Furthermore, it is clear that $|v_n'(x)| \leq 2 c_0$ for all $x \in D$. We may now use  Lebesgue's dominated convergence result to conclude the proof.
\end{proof}

\section{Proof of Theorem~\ref{THM 1} } \label{section4}
\setcounter{equation}{0}

This proof is separated into three subsections.

\subsection{Lyapunov's projection equation} \label{SECT Lyap proj}

Let $\omega \in \CC$ and $T$ be the operator given by \eqref{T Riesz}. Let us write it as

\begin{equation*}
T = L + \omega^2 A,
\end{equation*}

\n where $L$ given by \eqref{L} is the linear part of $T$ and $A:X\to X$ is the nonlinear part:

\begin{equation} \label{A}
\langle A (\bm{u}), \bm{\phi} \rangle_X = 
\int_{D} \left( \chi_1 u_2 \overline{u}_1 \overline{\phi}_2 + \chi_2 u_1^2  \overline{\phi}_1 \right) dx.
\end{equation}

\n Let $N, R$ be the kernel and the range of $L$, respectively. We note that $L$, and consequently, its kernel and range depend on $\omega$. Mostly, we will suppress this dependence for the ease of notation, but occasionally this dependence will be shown explicitly with subscript $\omega$. Recall that $L$ has closed range (cf. Theorem~\ref{THM L}). Therefore, we can decompose

\begin{equation*}
X = N \oplus N^\perp, \qquad \qquad X = R \oplus R^\perp. 
\end{equation*}

\n Let $P_{N}: X \to N$ denote the orthogonal projection onto $N$, we define $P_R$ analogously. Solving $T(\bm{u}) = 0$ is equivalent to solving the equations

\begin{equation} \label{projection eqs Lyap}
\begin{cases}
P_{R} T(\bm{u}) = 0
\\
P_{R^\perp} T(\bm{u}) = 0
\end{cases}
\qquad \Longleftrightarrow \qquad
\begin{cases}
L(\bm{u}) + \omega^2 P_{R} A(\bm{u}) = 0
\\
\omega^2 P_{R^\perp} A(\bm{u}) = 0,
\end{cases}
\end{equation}

\n where we used that $P_{R}L =L$ and $P_{R^\perp}L = 0$. Writing

\begin{equation*}
\bm{u} = \bm{v} + \bm{w}, \qquad \bm{v} \in N, \ \bm{w} \in N^\perp 
\end{equation*}

\n the first equation of \eqref{projection eqs Lyap} can be rewritten as a fixed point equation for $\bm{w}$:

\begin{equation} \label{w fixed point eq and B}
\bm{w} = \CB(\bm{v} + \bm{w}), 
\qquad \hbox{ with }\qquad \CB(\bm{u}) = - \omega^2 L^{-1} P_{R} A(\bm{u})\in N^\perp .
\end{equation} 

\n Here we used that $L : N^\perp \to R$ is invertible with bounded inverse (Theorem~\ref{THM L}). The main idea of Lyapunov's projection method (cf. \cite{zeidler_NFA_I}) is to solve \eqref{w fixed point eq and B}, assuming $\CB(\bm{v}+ \cdot)$ is a contraction on $N^\perp$, and substitute the solution $\bm{w}(\bm{v})$ into the second equation:

$$\omega^2 P_{R^\perp} A(\bm{v} + \bm{w}(\bm{v})) = 0.$$

\n As $R^\perp$ is 2-dimensional (see \eqref{N and R^perp L}), we obtain that the equation $T(\bm{u})=0$ can be rewritten as a system of 2 nonlinear equations, with 2 unknowns (the unknown $\bm{v}$ lies in $N$, which also is 2-dimensional), see \eqref{projection eq} below.  

Thus, we need to analyze the properties of the operator $\CB$. To that end, let us first derive basic estimates for $A$.

\begin{lemma} \label{LEM A Lip}
Let $A: X \to X$ be given by \eqref{A} and $M>0$ be defined by \eqref{M and chi2}. There exists an absolute constant $C>0$, such that for any $\bm{u}, \bm{v} \in X$

\begin{equation} \label{A Lip}
\begin{split}
\|A(\bm{u})\|_X &\leq C M \|\bm{u}\|^2_X
\\[.03in]
\|A(\bm{u}) - A(\bm{v})\|_X &\leq C M \max\{\|\bm{u}\|_X, \|\bm{v}\|_X\} \|\bm{u} - \bm{v}\|_X.
\end{split}
\end{equation}
 
\end{lemma}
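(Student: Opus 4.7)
The idea is to compute $\|A(\bm{u})\|_X$ by duality via the Riesz characterization
\[
\|A(\bm{u})\|_X = \sup_{\|\bm{\phi}\|_X \leq 1} \left| \langle A(\bm{u}), \bm{\phi} \rangle_X \right|,
\]
and estimate each of the two integrals in the definition \eqref{A} by H\"older's inequality, absorbing the $L^\infty$-norms of both $\chi_1, \chi_2$ (which are bounded by $M$) and the $L^\infty$-norms arising from the quadratic factors via the one-dimensional Sobolev embedding $H^2(D) \hookrightarrow C^0(\overline{D})$, which gives a universal constant $C_0 > 0$ with $\|u\|_{L^\infty} \leq C_0 \|u\|_{H^2}$.

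More precisely, for the first integrand I would bound
\[
\left| \int_D \chi_1 u_2 \overline{u}_1 \overline{\phi}_2 \, dx \right| \leq \|\chi_1\|_{L^\infty} \|u_1\|_{L^\infty} \|u_2\|_{L^2} \|\phi_2\|_{L^2} \leq C_0 M \|u_1\|_{H^2} \|u_2\|_{L^2} \|\phi_2\|_{L^2},
\]
and for the second
\[
\left| \int_D \chi_2 u_1^2 \overline{\phi}_1 \, dx \right| \leq \|\chi_2\|_{L^\infty} \|u_1\|_{L^\infty}^2 \|\phi_1\|_{L^1} \leq C_0^2 M \|u_1\|_{H^2}^2 \|\phi_1\|_{L^2}.
\]
Both right-hand sides are bounded by a constant times $M \|\bm{u}\|_X^2 \|\bm{\phi}\|_X$, since $\|u_i\|_{L^2}, \|u_i\|_{H^2} \leq \|\bm{u}\|_X$ and likewise for $\bm{\phi}$. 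Taking the supremum over $\|\bm{\phi}\|_X \leq 1$ yields the first inequality of \eqref{A Lip} with an absolute constant.

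For the Lipschitz estimate I would use the algebraic identities
\[
u_2 \overline{u}_1 - v_2 \overline{v}_1 = (u_2 - v_2) \overline{u}_1 + v_2 (\overline{u}_1 - \overline{v}_1), \qquad u_1^2 - v_1^2 = (u_1 - v_1)(u_1 + v_1),
\]
so that $\langle A(\bm{u}) - A(\bm{v}), \bm{\phi} \rangle_X$ is a sum of four terms, each of which is linear in a component of $\bm{u} - \bm{v}$ and in a component of either $\bm{u}$ or $\bm{v}$. Each of these four terms is then estimated exactly as in the first part, producing a factor of $\|\bm{u} - \bm{v}\|_X$ and a factor of $\max\{\|\bm{u}\|_X, \|\bm{v}\|_X\}$, and another supremum over $\|\bm{\phi}\|_X \leq 1$ concludes the argument. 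No step here is substantive; the only point worth care is consistent use of the Sobolev embedding (one factor in $L^\infty$, the conjugate factors in $L^2$) so that the $H^2$-norm is always spent on $u_1$-type factors, never on $u_2$-type factors, which live only in $L^2$. There is no real obstacle, but this bookkeeping is where a sloppy application of H\"older would fail.
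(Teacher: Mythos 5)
Your proof is correct and follows essentially the same route as the paper's: the Riesz/duality characterization of the $X$-norm, H\"older's inequality with one factor in $L^\infty$ bounded via the Sobolev embedding $H^2 \hookrightarrow L^\infty$, and the same algebraic decompositions $u_2\overline{u}_1 - v_2\overline{v}_1 = (u_2-v_2)\overline{u}_1 + v_2(\overline{u}_1-\overline{v}_1)$ and $u_1^2 - v_1^2 = (u_1-v_1)(u_1+v_1)$. Your closing remark about placing the $L^\infty$ (hence $H^2$) weight only on $u_1$-type factors, never on $u_2$-type factors, is exactly the bookkeeping the paper performs.
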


\begin{proof}
The first inequality of \eqref{A Lip} is obvious, let us show the second one. Note that

\begin{equation*}
\langle A (\bm{u})- A (\bm{v}), \bm{\phi}  \rangle_X = \int_{D} \left[\chi_1  \overline{u}_1 (u_2-v_2) \overline{\phi}_2 + \chi_1 v_2 \left( \overline{u}_1 - \overline{v}_1 \right) \overline{\phi}_2 + \chi_2 (u_1+v_1) (u_1-v_1) \overline{\phi}_1 \right]\, dx.
\end{equation*}

\n Bounding $\chi_1, \chi_2$ by $M$, using H{\"o}lder's inequality and taking supremum over $\|\bm{\phi}\|_X \leq 1$ we arrive at

\begin{equation*}
\begin{split}
\frac{1}{M} \|A(\bm{u})-A(\bm{v})\|_X \leq & \|u_1\|_{L^\infty} \ \|u_2-v_2\|_{L^2} + \|v_2\|_{L^2} \ \|u_1-v_1\|_{L^\infty} + 
\\[.01in]
&+\|u_1 +v_1\|_{L^\infty} \ \|u_1-v_1\|_{L^2}.
\end{split}
\end{equation*}

\n The desired estimate follows, since the $L^\infty$-norms in the above inequality are bounded by the $H^2$-norms due to the Sobolev embedding theorem.
\end{proof}

We introduce the following notation (signifying its $\omega$-dependence)

\begin{equation} \label{gamma}
\gamma_\omega = \left\| L_\omega^{-1} \right\|_{\CL(R_\omega, N_\omega^\perp)},
\end{equation}

\n and use $B^{N^\perp}_r(0)$ to denote the open ball of radius $r>0$, centered at the origin in the Hilbert space $N^\perp$.

\begin{lemma} \label{LEM B}
Let $M, \gamma_\omega$ be given by \eqref{M and chi2}, \eqref{gamma}, respectively, and let $C$ be the absolute constant from Lemma~\ref{LEM A Lip}. For any $r>0$ with

\begin{equation} \label{contraction cond}
C M |\omega|^2 \gamma_\omega  r  < \frac{1}{4},
\end{equation} 

\n and any fixed $\bm{v} \in N$ with $\|\bm{v}\|_X\leq r$   

\begin{enumerate}
\item[(i)] the map $\CB(\bm{v} + \cdot) : N^\perp \to N^\perp$ is a contraction that maps the closed ball $\overline{B_r^{N^\perp}}(0)$ into itself. In particular, the equation $\bm{w}=\CB(\bm{v} + \bm{w})$ has a unique solution $\bm{w}(\bm{v})$ with $\|\bm{w}(\bm{v})\|_X \leq r$. 

\item[(ii)] furthermore, we have the estimate

\begin{equation} \label{w(v) v^2 bound}
\|\bm{w}(\bm{v})\|_X \leq 4C M |\omega|^2 \gamma_\omega \|\bm{v}\|_X^2,
\end{equation}
and in particular $\bm{w}(0)=0$.

\end{enumerate}
\end{lemma}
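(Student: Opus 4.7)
The plan is to treat the statement as a standard Banach fixed-point argument applied to $\CB(\bm{v}+\cdot)$ on the closed ball $\overline{B_r^{N^\perp}}(0)$, leveraging Lemma~\ref{LEM A Lip} for the nonlinearity $A$, the bound $\|L^{-1}\|_{\CL(R,N^\perp)} = \gamma_\omega$, and the fact that the orthogonal projection $P_R$ has operator norm at most $1$. Combining these with the definition $\CB(\bm{u}) = -\omega^2 L^{-1} P_R A(\bm{u})$ yields the two running estimates
\begin{equation*}
\|\CB(\bm{u})\|_X \leq CM|\omega|^2 \gamma_\omega \|\bm{u}\|_X^2, \qquad \|\CB(\bm{u}_1)-\CB(\bm{u}_2)\|_X \leq CM|\omega|^2 \gamma_\omega \max\{\|\bm{u}_1\|_X,\|\bm{u}_2\|_X\}\|\bm{u}_1-\bm{u}_2\|_X,
\end{equation*}
where $C$ is the absolute constant from Lemma~\ref{LEM A Lip}.

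For part (i), I would fix $\bm{v}\in N$ with $\|\bm{v}\|_X\leq r$ and any $\bm{w},\bm{w}_1,\bm{w}_2 \in \overline{B_r^{N^\perp}}(0)$. The orthogonality $N\perp N^\perp$ is crucial here: by Pythagoras $\|\bm{v}+\bm{w}\|_X^2 = \|\bm{v}\|_X^2 + \|\bm{w}\|_X^2 \leq 2r^2$, hence $\|\bm{v}+\bm{w}\|_X \leq \sqrt{2}\,r$. Inserting this into the first running estimate and using \eqref{contraction cond} gives
\begin{equation*}
\|\CB(\bm{v}+\bm{w})\|_X \leq CM|\omega|^2\gamma_\omega \cdot 2r^2 = 2\bigl(CM|\omega|^2\gamma_\omega r\bigr)\cdot r < \tfrac{1}{2}\,r,
\end{equation*}
so $\CB(\bm{v}+\cdot)$ maps $\overline{B_r^{N^\perp}}(0)$ into itself. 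The second running estimate, together with the same Pythagoras bound applied to $\bm{v}+\bm{w}_1$ and $\bm{v}+\bm{w}_2$, yields
\begin{equation*}
\|\CB(\bm{v}+\bm{w}_1) - \CB(\bm{v}+\bm{w}_2)\|_X \leq \sqrt{2}\,CM|\omega|^2\gamma_\omega\, r\,\|\bm{w}_1-\bm{w}_2\|_X < \tfrac{1}{2}\|\bm{w}_1-\bm{w}_2\|_X,
\end{equation*}
so $\CB(\bm{v}+\cdot)$ is a strict contraction on a complete metric space, and Banach's theorem supplies a unique fixed point $\bm{w}(\bm{v}) \in \overline{B_r^{N^\perp}}(0)$.

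For part (ii), I would substitute the fixed point relation $\bm{w}(\bm{v}) = \CB(\bm{v}+\bm{w}(\bm{v}))$ into the first running estimate and use orthogonality once more. With $K:= CM|\omega|^2\gamma_\omega$ this gives
\begin{equation*}
\|\bm{w}(\bm{v})\|_X \leq K\bigl(\|\bm{v}\|_X^2 + \|\bm{w}(\bm{v})\|_X^2\bigr).
\end{equation*}
Since $\|\bm{w}(\bm{v})\|_X \leq r$ and $Kr < \tfrac14$, the quadratic term is absorbed: $K\|\bm{w}(\bm{v})\|_X^2 \leq \tfrac14 \|\bm{w}(\bm{v})\|_X$, hence $\tfrac34 \|\bm{w}(\bm{v})\|_X \leq K\|\bm{v}\|_X^2$, which gives $\|\bm{w}(\bm{v})\|_X \leq \tfrac43 K\|\bm{v}\|_X^2 \leq 4K\|\bm{v}\|_X^2$ as claimed. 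Finally, $\bm{v}=0$ forces $\bm{w}(0)=0$ immediately by uniqueness, since $\CB(0)=0$.

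There is no real obstacle here beyond careful bookkeeping: the only subtle point is to exploit $N\perp N^\perp$ at the right moments to keep the constants compatible with the hypothesis \eqref{contraction cond} and with the quadratic bound \eqref{w(v) v^2 bound}; using the triangle inequality in place of Pythagoras would lose a factor and slightly weaken the threshold, but would not change the structure of the argument.
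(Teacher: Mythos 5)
Your proof is correct and follows essentially the same route as the paper: quadratic and Lipschitz bounds for $\CB$ derived from Lemma~\ref{LEM A Lip}, the Banach fixed-point theorem applied on $\overline{B_r^{N^\perp}}(0)$, and absorption of the quadratic term in the fixed-point identity to obtain \eqref{w(v) v^2 bound}. The only difference is cosmetic: you invoke Pythagoras via $N\perp N^\perp$ to bound $\|\bm{v}+\bm{w}\|_X$, whereas the paper uses the triangle inequality, so your intermediate constants are marginally tighter but the argument is otherwise the same.
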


\begin{proof}
Due to Lemma~\ref{LEM A Lip}, for any $\bm{v} \in N$ and $\bm{w}_1, \bm{w}_2 \in N^\perp$

\begin{equation} \label{B Lip}
\begin{split}
\|\CB(\bm{v})\|_X &\leq C M |\omega|^2 \gamma_\omega \|\bm{v}\|_X^2
\\[.05in]
\|\CB(\bm{v} + \bm{w}_1) - \CB(\bm{v} + \bm{w}_2)\|_X &\leq 2 C M |\omega|^2 \gamma_\omega \max \left\{ \|\bm{v}\|_X, \|\bm{w}_j\|_X \right\} \ \|\bm{w}_1 - \bm{w}_2\|_X
\end{split}
\end{equation}

\n Suppose now that $\bm{v}, \bm{w}_j$ have norms less than or equal to $r$ and assume \eqref{contraction cond} holds, then

\begin{equation*}
\|\CB(\bm{v} + \bm{w}_1) - \CB(\bm{v} + \bm{w}_2)\|_X < \frac{1}{2} \ \|\bm{w}_1 - \bm{w}_2\|_X,
\end{equation*}

\n proving that $\CB(\bm{v} + \cdot)$ is a contraction. A combination of the last estimate with the first inequality of \eqref{B Lip} gives

\begin{equation*}
\|\CB(\bm{v} + \bm{w}_1)\|_X \leq \|\CB(\bm{v})\|_X + \frac{1}{2} \ \|\bm{w}_1 \|_X < \frac{r}{4} + \frac{r}{2} < r,
\end{equation*}

\n showing that $\CB(\bm{v} + \cdot)$ maps the closed ball $\overline{B_r^{N^\perp}}(0)$ into itself. This finishes the proof of part $(i)$, as by the Banach fixed point theorem, the equation $\bm{w}=\CB(\bm{v} + \bm{w})$ has a unique fixed point $\bm{w}(\bm{v}) \in \overline{B_r^{N^\perp}}(0)$. 

To conclude the proof it remains to establish \eqref{w(v) v^2 bound}. Let us write $\bm{w}=\bm{w}(\bm{v})$. From the fixed point equation and Lemma~\ref{LEM A Lip} we estimate

\begin{equation*} 
\|\bm{w}\|_X \leq C M |\omega|^2 \gamma_\omega \|\bm{v} + \bm{w}\|_X^2 
\leq
2C M |\omega|^2 \gamma_\omega \|\bm{v}\|_X^2 + 2C M |\omega|^2 \gamma_\omega \|\bm{w}\|_X^2.
\end{equation*}

\n The second term in the last sum  can be estimated as follows: 

\begin{equation*} 
2C M |\omega|^2 \gamma_\omega \|\bm{w}\|_X^2 \leq 2C M |\omega|^2 \gamma_\omega r \|\bm{w}\|_X \leq \frac{1}{2} \|\bm{w}\|_X,  
\end{equation*}

\n where in the first step we used that $\|\bm{w}\|_X \leq r$, and in the second one we used the hypothesis \eqref{contraction cond}. The last two inequalities clearly finish the proof.

\end{proof}

\vspace{.1in}

Combining the above lemma with \eqref{projection eqs Lyap} we arrive at:

\begin{proposition} \label{THM projection}
Let $r>0$ and suppose \eqref{contraction cond} holds. If $\bm{u} \in X$ with $\|\bm{u}\|_X \leq r$ and 

\begin{equation}
\bm{u} = \bm{v} + \bm{w}, \qquad \qquad \bm{v} \in N, \ \bm{w} \in N^\perp, 
\end{equation}

\n then, 

\begin{equation} \label{projection eq}
T(\bm{u}) = 0
\qquad \Longleftrightarrow \qquad
\begin{cases}
\bm{w} = \bm{w}(\bm{v})
\\[.02in]
\omega^2 P_{R^\perp} A(\bm{u}) = 0,
\end{cases}
\end{equation}

\n where $\bm{w}(\bm{v})$ is the unique solution of the equation \eqref{w fixed point eq and B} with $\| \bm{w}(\bm{v}) \|_X\leq r$.
\end{proposition}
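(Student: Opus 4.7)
The plan is to view this as an essentially bookkeeping assembly of \eqref{projection eqs Lyap}, which already converts $T(\bm{u})=0$ into two projected equations, together with the fixed-point conclusion of Lemma~\ref{LEM B}. The preliminary observation I would record is that the orthogonal decomposition $\bm{u}=\bm{v}+\bm{w}$ together with $\|\bm{u}\|_X \leq r$ gives $\|\bm{v}\|_X, \|\bm{w}\|_X \leq r$, which is exactly what is needed to invoke Lemma~\ref{LEM B}(i) on the closed ball of radius $r$ in $N^\perp$.

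For the forward direction $T(\bm{u})=0 \Longrightarrow $ RHS of \eqref{projection eq}, I would apply \eqref{projection eqs Lyap} to get $L(\bm{u}) + \omega^2 P_R A(\bm{u}) = 0$ together with $\omega^2 P_{R^\perp} A(\bm{u}) = 0$. In the first equation, using $L(\bm{v})=0$ (since $\bm{v}\in N$) and the fact that $L$ restricts to a bijection $N^\perp \to R$ with bounded inverse (Theorem~\ref{THM L}), I would rewrite it as $\bm{w} = -\omega^2 L^{-1} P_R A(\bm{v}+\bm{w}) = \CB(\bm{v}+\bm{w})$. Since $\bm{w}$ lies in the closed ball of radius $r$ in $N^\perp$, the uniqueness asserted in Lemma~\ref{LEM B}(i) then forces $\bm{w} = \bm{w}(\bm{v})$; the second equation carries over verbatim.

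The reverse direction is a direct retracing: applying $L$ to the fixed-point identity $\bm{w} = -\omega^2 L^{-1} P_R A(\bm{u})$ gives $L(\bm{w}) = -\omega^2 P_R A(\bm{u})$, and with $L(\bm{v})=0$ this reads $P_R T(\bm{u})=0$; combined with $P_{R^\perp} T(\bm{u}) = \omega^2 P_{R^\perp} A(\bm{u}) = 0$, one recovers $T(\bm{u})=0$ via \eqref{projection eqs Lyap}. I do not anticipate a substantive obstacle: the only subtlety is to verify that the $\bm{w}$ extracted in the forward direction lies in the ball where Lemma~\ref{LEM B}(i) guarantees uniqueness, and this is immediate from the orthogonality of the splitting and the hypothesis $\|\bm{u}\|_X \leq r$.
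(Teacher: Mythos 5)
Your proposal is correct and matches the paper's (implicit) argument: the paper simply states the proposition as "combining Lemma~\ref{LEM B} with \eqref{projection eqs Lyap}," and you have filled in exactly the intended bookkeeping, including the key observation that the orthogonal splitting gives $\|\bm{v}\|_X,\|\bm{w}\|_X\le \|\bm{u}\|_X\le r$, which is what makes the uniqueness in Lemma~\ref{LEM B}(i) applicable.
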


\subsection{The main idea of the proof and some auxiliary lemmas} \label{SECT main idea of nonexistence}

Let $\omega \in \CC$ and $N=N_\omega$ be given by \eqref{N and R^perp L}. Let $z_\omega, \widetilde{z}_\omega$ be defined by \eqref{z_omega}. These functions form a basis for $N_\omega$. Therefore, a generic element of $N_\omega$ has the form 

\begin{equation} \label{v_omega basis expansion}
\bm{v}_{\omega,\bm{\alpha}} = \left( \alpha_1 z_\omega + \alpha_2 \widetilde{z}_\omega, 0 \right),
\qquad \qquad \bm{\alpha}=(\alpha_1, \alpha_2) \in \CC^2.
\end{equation}

\n According to Proposition~\ref{THM projection}, searching for a nontrivial solution to $T(\bm{u})=0$ is (provided $\omega \neq 0$ and the hypothesis \eqref{contraction cond} holds) equivalent to solving 

\begin{equation} \label{proj omega}
\begin{cases}
P_{R_{\omega}^\perp} A \left( \bm{v}_{\omega,\bm{\alpha}} + \bm{w} (\bm{v}_{\omega,\bm{\alpha}})  \right)  = 0   
\\
\bm{\alpha} \in \CC^2 \backslash \{0\},
\end{cases}
\end{equation}

\n where the unknown is the constant vector $\bm{\alpha}=(\alpha_1, \alpha_2)$.

The main idea of the proof of Theorem~\ref{THM 1} is to first consider the problem \eqref{proj omega}, corresponding to $\omega = 0$. Formally $\bm{w}=0$ in that case, and we therefore consider the problem

\begin{equation} \label{proj 0}
\begin{cases}
P_{R_{0}^\perp} A \left( \bm{v}_{0,\bm{\alpha}} \right)  = 0   
\\
\bm{\alpha} \in \CC^2\backslash \{0\}.
\end{cases}
\end{equation}

\n It will be shown that \eqref{proj 0} has no solution and a lower bound on $P_{R_{0}^\perp} A(\bm{v})$ will be obtained. Then, we will derive closeness estimates between the problems \eqref{proj 0} and \eqref{proj omega}. Carefully tracking the $\omega$-dependence in these estimates it will be shown that under an appropriate smallness assumption on $\omega$, the problem \eqref{proj omega} will have no solution either. We mention that the closeness estimates need to be uniform in $\bm{\alpha}$. The main difficulty is that $\omega$-dependence enters not only through $ \bm{v}_{\omega,\bm{\alpha}}$, but also through the spaces $N_\omega, R_\omega$ and their orthogonal complements. We start by showing that \eqref{proj 0} has no solution.

\begin{lemma} \label{LEM proj 0}
Assume the notation introduced above and that $\chi_2(x)>0$ for a.e. $x \in D$, then there exists a constant $C=C(\chi_2) > 0$, such that

\begin{equation} \label{proj 0 lower bound}
\big\| P_{R_0^\perp} A(\bm{v}) \big\|_X \geq C \|\bm{v} \|_X^2, \qquad \qquad \forall \ \bm{v} \in N_0.
\end{equation}
\end{lemma}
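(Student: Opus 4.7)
The plan is to reduce the claim to a quantitative non-degeneracy statement for a quadratic map on $\CC^2$. At $\omega=0$ the characterization \eqref{N and R^perp L} gives $N_0 = R_0^\perp = \mathrm{span}\{(1,0),\,(x,0)\}$, so I would write a generic $\bm{v}\in N_0$ as $\bm{v}=(v_1,0)$ with $v_1=\alpha_1 x+\alpha_2$ for $\bm\alpha=(\alpha_1,\alpha_2)\in\CC^2$. Substituting this into \eqref{A} (with $v_2=0$) yields $\langle A(\bm{v}),\bm\phi\rangle_X = \int_D \chi_2 v_1^2\,\overline{\phi}_1\,dx$; testing against $\bm\phi=(\phi_1,0)$ with $\phi_1\in\mathrm{span}\{1,x\}$ (so $\phi_1''=0$, which collapses the $H^2$-inner product to the $L^2$-one) identifies $P_{R_0^\perp}A(\bm{v})=(p,0)$, where $p\in\mathrm{span}\{1,x\}$ is the $L^2$-orthogonal projection of $\chi_2 v_1^2$ onto the affine functions. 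Since $p''=v_1''=0$, the norms simplify to $\|P_{R_0^\perp}A(\bm{v})\|_X = \|p\|_{L^2}$ and $\|\bm{v}\|_X = \|v_1\|_{L^2}$, the latter being equivalent to $|\bm\alpha|:=(|\alpha_1|^2+|\alpha_2|^2)^{1/2}$.

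Next I would observe that the map $F:\bm\alpha\mapsto p$ is continuous from $\CC^2$ to $L^2(0,1)$ and satisfies $F(\lambda\bm\alpha)=\lambda^2 F(\bm\alpha)$ for every $\lambda\in\CC$, so the quotient $\|F(\bm\alpha)\|_{L^2}/|\bm\alpha|^2$ is scaling invariant. Consequently, \eqref{proj 0 lower bound} is equivalent to a uniform positive lower bound on $\|F(\bm\alpha)\|_{L^2}$ over the unit sphere of $\CC^2$, and by compactness combined with continuity it suffices to prove injectivity: $F(\bm\alpha)=0\ \Rightarrow\ \bm\alpha=0$.

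For injectivity, $F(\bm\alpha)=0$ unpacks to the two moment conditions $\int_0^1\chi_2 v_1^2\,dx=0$ and $\int_0^1\chi_2 v_1^2\,x\,dx=0$. Positivity of $\chi_2$ disposes of the case $\alpha_1=0$ directly. Otherwise I would set $c=\alpha_2/\alpha_1\in\CC$ and $m_k=\int_0^1 \chi_2 x^k\,dx$ to rewrite the two conditions as
\begin{equation*}
m_0 c^2 + 2 m_1 c + m_2 = 0, \qquad m_1 c^2 + 2 m_2 c + m_3 = 0.
\end{equation*}
The strict weighted Cauchy--Schwarz inequalities $m_1^2<m_0 m_2$ and $m_2^2<m_1 m_3$ (strict because $\chi_2>0$ a.e.\ and $1,x$ are linearly independent on $(0,1)$) make both discriminants negative, so the complex roots have real parts $-m_1/m_0$ and $-m_2/m_1$, respectively. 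Equality of these real parts would force $m_1^2=m_0 m_2$, contradicting strict Cauchy--Schwarz; the two quadratics therefore share no root, so $\bm\alpha$ must vanish.

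The main obstacle I expect is precisely this injectivity step: because $v_1$ is complex-valued, $v_1^2$ may have cancellations and positivity of $\chi_2$ alone is not enough to force the moments to vanish only trivially. The essential input is the strictness of the weighted Cauchy--Schwarz inequality, which provides the required rigidity. A more quantitative bookkeeping of this strictness (for example, tracking the positive quantity $\min_k(m_{k-1}m_{k+1}-m_k^2)$) would additionally yield the advertised dependence of $C(\chi_2)$ on $\chi_2$, in particular recovering the proportionality to $\chi_2$ when $\chi_2$ is a positive constant.
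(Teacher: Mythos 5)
Your proof is correct and follows essentially the same route as the paper's: identify $N_0=R_0^\perp$ with the affine functions (noting that the $X$-projection collapses to the $L^2$-projection and that $\|\bm{v}\|_X=\|v_1\|_{L^2}$), use homogeneity and compactness of the unit sphere in the finite-dimensional space $N_0$ to reduce the lower bound to injectivity of the quadratic moment map, and settle injectivity via the strict weighted Cauchy--Schwarz inequality, which holds because $\chi_2>0$ a.e. The only minor variation is the finishing move: the paper eliminates the $c^2$ terms from the two quadratics to obtain a linear equation with real coefficients and nonzero leading coefficient $m_1^2-m_0m_2$, forcing $c$ real and hence an immediate contradiction (one strict Cauchy--Schwarz suffices), whereas you use that both discriminants are negative (two strict Cauchy--Schwarz inequalities) and compare the real parts $-m_1/m_0$ and $-m_2/m_1$ of the conjugate root pairs --- equally valid, just slightly less economical.
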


\begin{remark}
If $\chi_2$ is a positive constant, then the implicit constant in \eqref{proj 0 lower bound} is of the form $C(\chi_2) = C_0 \chi_2 $, where $C_0>0$ is an absolute constant.
\end{remark}

\begin{proof}
The inequality is obvious for $\bm{v} = 0$, so let us assume that $\bm{v} \neq 0$. Note that $N_0 = \text{span}\{(1,0); (x,0)\}$ and $R_0^\perp = N_0$, hence $\bm{v} = (\alpha_1 + \alpha_2 x, 0)$ for some $\alpha_1, \alpha_2 \in \CC$. Let $\bm{f}=(f,0)$ and $\bm{g}=(g,0)$ be an $X$-orthonormal basis for $N_0$, then the orthogonal projection operator is given by

\begin{equation*}
P_{N_0} A(\bm{v}) = \langle A(\bm{v}), \bm{f} \rangle_X \ \bm{f} + \langle A(\bm{v}), \bm{g} \rangle_X \ \bm{g},
\end{equation*}

\n but the definition of $A$ \eqref{A} implies

\begin{equation*}
\langle A (\bm{v}), \bm{f} \rangle_X = 
\int_{D} \chi_2 v_1^2  \overline{f} dx
\end{equation*}

\n and analogously for $\bm{g}$. In particular, $P_{N_0} A(\bm{v})$ depends only on $\chi_2$ and is independent of $\chi_1$. For any $\lambda \in \CC$ and $\bm{v} \in N_0$ 

\begin{equation*}
P_{N_0} A(\lambda \bm{v}) = \lambda^2 P_{N_0} A(\bm{v}). 
\end{equation*}

\n Therefore, establishing the inequality \eqref{proj 0 lower bound} is equivalent to showing 

\begin{equation} \label{proj A lower bound 2}
\|P_{N_0} A(\bm{v}) \|_X \geq C >0 \qquad \qquad \forall \ \|\bm{v}\|_X = 1.
\end{equation}

\n The map $\bm{v} \to \|P_{N_0} A(\bm{v}) \|_X$ is continuous as a map $N_0 \to \RR$. The set $S = \{\bm{v} \in N_0: \|\bm{v}\|_X = 1\}$ is closed and bounded, and therefore (by finite dimensionality) compact. In particular, $\|P_{N_0} A(\cdot) \|_X$ attains its minimum on $S$. Hence, \eqref{proj A lower bound 2} will follow once we show 

\begin{equation} \label{proj A lower bound 3}
\|P_{N_0} A(\bm{v}) \|_X \neq 0 \qquad \qquad \forall \ \|\bm{v}\|_X = 1.
\end{equation}

\n It is clear that $P_{N_0} A(\bm{v}) = 0$ iff $A(\bm{v})$ is orthogonal to $(1,0)$ and $(x,0)$, i.e.,

\begin{equation*}
\int_D \chi_2 (\alpha_1 + \alpha_2 x)^2 dx = 0, \qquad \text{and} \qquad \int_D \chi_2 (\alpha_1 + \alpha_2 x)^2 x dx = 0.
\end{equation*}  

\n Let us show that these equations imply that $\bm{v} = 0$, which then verifies (\ref{proj A lower bound 3}). If $\alpha_2 = 0$, then obviously $\alpha_1 = 0$, because $\chi_2>0$ a.e. in $D$, and we conclude that $\bm{v}=0$. So let us assume that $\alpha_2 \neq 0$, and set $\alpha = \alpha_1 / \alpha_2$, then we obtain

\begin{equation} \label{alpha +x two integrals}
\int_D \chi_2 (\alpha + x)^2 dx = 0, \qquad \text{and} \qquad \int_D \chi_2 (\alpha + x)^2 x dx = 0.
\end{equation}  

\n These are two quadratic equations in $\alpha$. They may be rewritten as

\begin{equation} \label{alpha system}
\begin{cases}
\displaystyle  \alpha^2 \int_D \chi_2 dx + 2\alpha \int_D \chi_2 x dx + \int_D \chi_2 x^2 dx = 0
\\[.2in]
\displaystyle \alpha^2 \int_D \chi_2 x dx + 2 \alpha \int_D \chi_2 x^2 dx + \int_D \chi_2 x^3 dx = 0.
\end{cases}
\end{equation}

\n If $\alpha \in \CC$ is a solution of the above system, then we shall show that $\alpha \in \RR$. Before doing so, note that this concludes our proof since  \eqref{alpha +x two integrals} obviously cannot hold for $\alpha$ real (thus leading us to the conclusion that $\alpha_2=\alpha_1=0$). 

Indeed if $\alpha \in \CC$ is a solution to \eqref{alpha system}, then we can cancel the $\alpha^2$ terms by multiplying the top equation  by $\frac12 \int_D \chi_2 x dx$ and the bottom equation by $\frac12 \int_D \chi_2 dx$ and subtracting the bottom from the top. This yields a linear equation for $\alpha$ with real coefficients and leading coefficient given by 

\begin{equation*}
c =   \left( \int_D \chi_2 x dx \right)^2 - \int_D \chi_2 x^2 dx \int_D \chi_2 dx.  
\end{equation*}

\n Since $\chi_2 > 0$ a.e. in $D$, the Cauchy-Schwartz inequality gives that $c \neq 0$ (in fact, $c<0$). It follows that $\alpha$ must be real. This completes the proof of Lemma \ref{LEM proj 0}.
\end{proof}

\begin{remark} \label{REM proj_0 diamond}
\normalfont
For the operator $T^\diamond: X_\diamond \to X_\diamond$ given by \eqref{T diamond} we similarly write

\begin{equation*}
T^\diamond = L^\diamond + \omega^2 A^\diamond,
\qquad \qquad
\langle A^\diamond (\bm{u}), \bm{\phi} \rangle_{X_\diamond} = 
\int_{D} \frac{\chi_1}{x} u_2 \overline{u}_1 \overline{\phi}_2 + \frac{\chi_2}{x} u_1^2  \overline{\phi}_1 dx.
\end{equation*}

\n In view of Remark~\ref{REM L diamond N and R^perp} $(R_0^\diamond)^\perp = N_0^\diamond = \text{span}\left\{(x,0)\right\}$. Therefore, 

\begin{equation*}
P_{N_0^\diamond} A^\diamond(\bm{v}) = 3 \langle A^\diamond(\bm{v}), (x,0) \rangle_{X_\diamond} \, (x,0).
\end{equation*}

\n But $\bm{v} \in N_0^\diamond$ is given by $\bm{v} = (\alpha x, 0)$ for some $\alpha \in \CC$, and a direct calculation gives

\begin{equation*}
\|P_{N_0^\diamond} A^\diamond(\bm{v})\|_{X_\diamond} = \sqrt{3}|\alpha|^2 \int_D \chi_2 x^2 dx = 3 \sqrt{3} \int_D \chi_2 x^2 dx \, \|\bm{v}\|_{X_\diamond}^2 .
\end{equation*}

\n Thus, Lemma~\ref{LEM proj 0} holds trivially in this case. Moreover, we see that if $\chi_2(x) \geq m > 0$ for a.e. $x\in D$, then the constant in the above inequality (or equivalently, in the analogue of \eqref{proj 0 lower bound}) can be taken to depend just on $m$.
\end{remark}

\vspace{.1in}

Let us next state some auxiliary lemmas regarding closeness estimates for the problems \eqref{proj 0} and \eqref{proj omega}. In particular we will assume that $\omega$ is small enough, specifically that it lies, in the disk

\begin{equation} \label{Omega_Q}
\Omega_Q = \left\{ \omega \in \CC: |\omega| \leq \frac{1}{2 \sqrt{Q}} \right\},
\end{equation}

\n where $Q$ is given by \eqref{q assumption}. The proofs of the lemmas below will be given in the Appendix.

\begin{lemma} \label{LEM PN_omega}
Let $N_\omega \subset X$ be given by \eqref{N and R^perp L} and $P_{N_\omega}: X \to N_\omega$ be the orthogonal projection operator. Then the map $\omega \mapsto P_{N_\omega}$ is continuous as a map from $\Omega_Q \to \CL(X)$. Moreover, there exist $\delta, C>0$ depending only on $Q$, such that

\begin{equation} \label{P_omega-P0 bound omega^2}
\left\| P_{N_\omega} - P_{N_0} \right\|_{\CL(X)} \leq C |\omega|^2, \qquad \qquad \forall \ |\omega|<\delta.
\end{equation}

\end{lemma}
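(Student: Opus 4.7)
I would make the projection explicit via a basis of $N_\omega$. Writing $\bm{\zeta}_1(\omega) = (z_\omega, 0)$ and $\bm{\zeta}_2(\omega) = (\widetilde{z}_\omega,0)$, the kernel $N_\omega$ is spanned by these two vectors, and once the Gram matrix $G(\omega)_{ij}=\langle \bm{\zeta}_i(\omega),\bm{\zeta}_j(\omega)\rangle_X$ is shown to be invertible uniformly for small $\omega$, the orthogonal projection admits the closed form
\begin{equation*}
P_{N_\omega}\bm{u} \,=\, \sum_{i,j=1}^{2} (G(\omega)^{-1})_{ij}\,\langle \bm{u},\bm{\zeta}_j(\omega)\rangle_X\,\bm{\zeta}_i(\omega).
\end{equation*}
The problem reduces to controlling the $\omega$-dependence of the two factors $\bm{\zeta}_i(\omega)$ and of $G(\omega)^{-1}$.

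\textbf{Key technical step: $H^2$-estimates for $z_\omega,\widetilde{z}_\omega$.} The initial value problems \eqref{z_omega} are equivalent to the Volterra integral equations
\begin{equation*}
z_\omega(x)= x - \omega^2\!\int_0^x (x-t)\,q(t)\,z_\omega(t)\,dt,\qquad \widetilde{z}_\omega(x)=1-\omega^2\!\int_0^x(x-t)\,q(t)\,\widetilde{z}_\omega(t)\,dt.
\end{equation*}
A standard Gronwall/fixed point argument, valid for $\omega \in \Omega_Q$ so that the integral operator is a contraction on $C[0,1]$, gives a uniform bound $\|z_\omega\|_{L^\infty}+\|\widetilde{z}_\omega\|_{L^\infty}\leq C_Q$. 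Plugging this back into the integral equation yields $\|z_\omega - z_0\|_{L^\infty}+\|\widetilde{z}_\omega - \widetilde{z}_0\|_{L^\infty}\leq C_Q|\omega|^2$, where $z_0(x)=x$ and $\widetilde{z}_0(x)=1$. The ODEs themselves then give $\|z_\omega''\|_{L^\infty}=|\omega|^2\|q z_\omega\|_{L^\infty}\leq C_Q|\omega|^2$ (and similarly for $\widetilde{z}_\omega$), so combining yields the key bound
\begin{equation*}
\|\bm{\zeta}_i(\omega)-\bm{\zeta}_i(0)\|_X \,\leq\, C_Q\,|\omega|^2,\qquad i=1,2,
\end{equation*}
uniformly on $\Omega_Q$. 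In particular $\omega\mapsto\bm{\zeta}_i(\omega)$ is continuous $\Omega_Q\to X$.

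\textbf{Assembling the estimate.} Since the inner product in $X$ is continuous and the $\bm{\zeta}_i(\omega)$ vary continuously in $X$, the entries of $G(\omega)$ are continuous in $\omega$ and satisfy $\|G(\omega)-G(0)\|\leq C_Q|\omega|^2$. The matrix $G(0)$ is the Gram matrix of $(x,0)$ and $(1,0)$ in $X$, which is invertible (those two vectors are linearly independent), so for $|\omega|<\delta$ (with $\delta=\delta(Q)>0$ small) $G(\omega)$ remains invertible with $\|G(\omega)^{-1}-G(0)^{-1}\|\leq C_Q|\omega|^2$ by the resolvent/Neumann series identity $G(\omega)^{-1}-G(0)^{-1}=-G(0)^{-1}(G(\omega)-G(0))G(\omega)^{-1}$. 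Substituting into the closed-form expression for $P_{N_\omega}$ and using the triangle inequality, each of the three types of terms (the variation of $(G^{-1})_{ij}$, of the inner product $\langle \bm{u},\bm{\zeta}_j(\omega)\rangle_X$, and of $\bm{\zeta}_i(\omega)$) contributes $O(|\omega|^2)\|\bm{u}\|_X$, yielding
\begin{equation*}
\|P_{N_\omega}-P_{N_0}\|_{\CL(X)} \,\leq\, C_Q\,|\omega|^2
\end{equation*}
for $|\omega|<\delta$, and continuity on all of $\Omega_Q$ by the same argument without the $|\omega|^2$ refinement.

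\textbf{Main obstacle.} No single step is deep; the care is bookkeeping. The delicate point is to extract exactly the power $|\omega|^2$ (and not $|\omega|$), which relies on the fact that $\omega$ enters the ODEs only through $\omega^2 q$, so every $O(\omega)$ quantity vanishes. Keeping constants uniform over $\Omega_Q$ requires the restriction $|\omega|\leq 1/(2\sqrt{Q})$ to ensure the Volterra iteration converges with a fixed constant.
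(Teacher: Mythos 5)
Your proof is correct and follows essentially the same route as the paper: both express $P_{N_\omega}$ explicitly in terms of the basis $\{z_\omega,\widetilde{z}_\omega\}$ of $N_\omega$, and both reduce the estimate to the $H^2$-bound $\|z_\omega - z_0\|_{H^2} + \|\widetilde{z}_\omega - \widetilde{z}_0\|_{H^2} \leq C_Q|\omega|^2$ obtained from the Volterra integral equation (the paper packages this as Lemma~\ref{LEM y}). The only cosmetic difference is that you use the Gram-matrix formula for the projection, whereas the paper first Gram--Schmidt orthonormalizes the basis; the two are equivalent bookkeeping devices, requiring in either case that the Gram matrix stays invertible (respectively, that the normalizing denominators stay bounded away from zero), which follows from the linear independence of $z_\omega,\widetilde{z}_\omega$ uniformly on $\Omega_Q$.
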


\begin{remark}
The lemma also holds for $R_\omega^\perp$ in place of $N_\omega$.
\end{remark}

\begin{lemma} \label{LEM gamma_omega}
Let $L_\omega$ be given by \eqref{L} and let $N_\omega, R_\omega$ be its kernel and its range, respectively. Let also

\begin{equation*}
\gamma_\omega = \left\| L_\omega^{-1} \right\|_{\CL(R_\omega, N_\omega^\perp)},
\end{equation*}

\n then $\omega \mapsto \gamma_\omega$ is continuous as a function from $\Omega_Q \to (0, \infty)$. In particular, as $\omega \to 0$

\begin{equation*}
\gamma_\omega \longrightarrow \gamma_0 = \left\| L_0^{-1} \right\|_{\CL(R_0, N_0^\perp)}.
\end{equation*}
\end{lemma}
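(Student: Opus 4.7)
The first step is to recast $\gamma_\omega$ as an operator norm on all of $X$: since $P_{R_\omega}$ is an orthogonal projection reproducing vectors in $R_\omega$, one has $\gamma_\omega = \|G_\omega\|_{\CL(X)}$, where $G_\omega := L_\omega^{-1}P_{R_\omega}$ is extended by $0$ on $R_\omega^\perp$. The task therefore reduces to showing that $\omega\mapsto G_\omega$ is norm-continuous from $\Omega_Q$ into $\CL(X)$.

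My plan is to construct an auxiliary norm-continuous family $\widetilde L_\omega \in \CL(X)$ of bijections whose inverses restrict to $L_\omega^{-1}$ on $R_\omega$. I would use the explicit, analytically varying bases $\{(z_\omega,0),(\widetilde z_\omega,0)\}$ of $N_\omega$ (from \eqref{z_omega} and \eqref{N and R^perp L}) and $\{(\overline{z_{2\omega}},0),(\overline{\widetilde z_{2\omega}},0)\}$ of $R_\omega^\perp$ (the latter being legitimate because $q$ is real, so complex conjugation sends solutions of $u''+4\omega^2 q u=0$ to solutions of $u''+4\overline\omega^2 q u=0$). Then define
\[
B_\omega \bm{u} := \big\langle \bm{u},(z_\omega,0)\big\rangle_X (\overline{z_{2\omega}},0) + \big\langle \bm{u},(\widetilde z_\omega,0)\big\rangle_X (\overline{\widetilde z_{2\omega}},0),
\]
which is a norm-continuous family of rank-$2$ operators vanishing on $N_\omega^\perp$ with range in $R_\omega^\perp$.

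Next set $\widetilde L_\omega := L_\omega + B_\omega$. Bijectivity on $X$ will be verified by decomposing $\bm{u} = \bm{u}_{N^\perp}+\bm{u}_N$ and $\bm{y}=\bm{y}_R+\bm{y}_{R^\perp}$: the equation $\widetilde L_\omega \bm{u}=\bm{y}$ splits into $L_\omega \bm{u}_{N^\perp}=\bm{y}_R$, uniquely solvable by Theorem~\ref{THM L}, and $B_\omega \bm{u}_N=\bm{y}_{R^\perp}$, uniquely solvable because the matrix of $B_\omega|_{N_\omega}$ in the chosen bases is the Gram matrix of $\{(z_\omega,0),(\widetilde z_\omega,0)\}$, which is invertible by linear independence (the Cauchy data at $x=0$ are distinct). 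The open mapping theorem then yields a bounded inverse. Since $\omega\mapsto L_\omega = L_0 + \omega^2 K$ and $\omega\mapsto B_\omega$ are norm-continuous, so is $\omega\mapsto \widetilde L_\omega$, and continuity of inversion on the open set of invertibles in $\CL(X)$ gives norm-continuity of $\omega\mapsto \widetilde L_\omega^{-1}$.

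To conclude, for any $\bm{y}\in R_\omega$ the above decomposition forces $\bm{u}_N=0$, whence $\widetilde L_\omega^{-1}\bm{y} = L_\omega^{-1}\bm{y}$ and therefore $G_\omega = \widetilde L_\omega^{-1} P_{R_\omega}$. Combined with norm-continuity of $P_{R_\omega}=I-P_{R_\omega^\perp}$ (the $R_\omega^\perp$ variant of Lemma~\ref{LEM PN_omega}), this yields norm-continuity of $G_\omega$, and hence of $\gamma_\omega=\|G_\omega\|_{\CL(X)}$, on $\Omega_Q$. The main technical point I expect will be confirming that $B_\omega|_{N_\omega}:N_\omega \to R_\omega^\perp$ is an isomorphism uniformly in $\omega \in \Omega_Q$; the Gram-matrix viewpoint above handles this cleanly and avoids any local patching of bases or Kato-style projection-perturbation machinery.
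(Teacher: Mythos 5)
Your proof is correct, and it takes a genuinely different route from the paper's. The paper argues directly on the quantity $\gamma_\omega^{-1}$, using the variational characterization $\gamma_\omega^{-1} = \inf\{\|L_\omega\bm{u}\|_X : \|P_{N_\omega^\perp}\bm{u}\|_X = \|\bm{u}\|_X = 1\}$ and trading the constraint $\bm{u}\in N_\omega^\perp$ for the constraint $\bm{u}\in N_{\omega_0}^\perp$ via the operator-norm bound on $P_{N_\omega^\perp}-P_{N_{\omega_0}^\perp}$ from Lemma~\ref{LEM PN_omega}; this yields two-sided inequalities $|\gamma_\omega^{-1}-\gamma_{\omega_0}^{-1}| \lesssim a_\omega + b_\omega$ with $a_\omega,b_\omega\to 0$. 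You instead upgrade $L_\omega$ to a bijection $\widetilde L_\omega = L_\omega + B_\omega$ on all of $X$ by a rank-$2$ correction carrying $N_\omega$ onto $R_\omega^\perp$, then invoke continuity of inversion on $\CL(X)$ and the identity $L_\omega^{-1}P_{R_\omega} = \widetilde L_\omega^{-1}P_{R_\omega}$. All the ingredients check out: the bases $\{(z_\omega,0),(\widetilde z_\omega,0)\}$ and $\{(\overline{z_{2\omega}},0),(\overline{\widetilde z_{2\omega}},0)\}$ do span $N_\omega$ and $R_\omega^\perp$ by \eqref{N and R^perp L} and \eqref{z sym}, the block decomposition of $\widetilde L_\omega\bm{u}=\bm{y}$ is exact because $L_\omega\bm{u}_N=0$, $B_\omega\bm{u}_{N^\perp}=0$, and $R_\omega\perp R_\omega^\perp$, the Gram matrix is invertible by linear independence of $z_\omega,\widetilde z_\omega$, and continuity of $P_{R_\omega}$ is exactly the $R_\omega^\perp$ variant noted in the remark after Lemma~\ref{LEM PN_omega}. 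Your approach is the more conceptual and reusable one: it delivers as a free byproduct a uniform bound on $\gamma_\omega$ over the compact $\Omega_Q$ (used in the proof of Theorem~\ref{THM 1}), whereas the paper reobtains this separately by compactness-plus-continuity. The paper's argument is slightly more elementary in that it never appeals to continuity of operator inversion, only to explicit triangle inequalities. One small remark: the uniformity of the isomorphism $B_\omega|_{N_\omega}:N_\omega\to R_\omega^\perp$ that you flag at the end is not actually needed for pointwise continuity of $\gamma_\omega$; pointwise invertibility of $\widetilde L_\omega$ together with norm-continuity of $\omega\mapsto\widetilde L_\omega$ already suffices.
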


\begin{lemma} \label{LEM v_omega - v_0}
Let $\bm{v}_{\omega, \bm{\alpha}}$ be given by \eqref{v_omega basis expansion} with $\omega \in \Omega_Q, \ \bm{\alpha} \in \CC^2$. There exist constants  $\sigma_1 > 0$ and $\sigma_2>0$, which depend only on $Q$, such that

\begin{equation*}
\|\bm{v}_{\omega, \bm{\alpha}} - \bm{v}_{0,\bm{\alpha}}\|_X \leq |\omega|^2 \sigma_1  \|\bm{v}_{\omega, \bm{\alpha}}\|_X , 
\qquad \qquad \forall \ \bm{\alpha} \in \CC^2,
\end{equation*}

\n and

\begin{equation*}
\|\bm{v}_{0,\bm{\alpha}}\|_X^2 \geq \sigma_2  \|\bm{v}_{\omega, \bm{\alpha}}\|_X^2 , 
\qquad \qquad \forall \ \bm{\alpha} \in \CC^2.
\end{equation*}
\end{lemma}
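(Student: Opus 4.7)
The plan is to exploit the explicit expression $\bm{v}_{\omega,\bm\alpha} = (\alpha_1 z_\omega + \alpha_2 \widetilde z_\omega, 0)$ together with the initial value problems \eqref{z_omega} defining $z_\omega, \widetilde z_\omega$. Since $z_0(x) = x$ and $\widetilde z_0(x) = 1$, both inequalities reduce to (i) controlling how close $z_\omega, \widetilde z_\omega$ are to $x, 1$ in $H^2$, and (ii) recovering the coefficients $\alpha_1, \alpha_2$ from the norm $\|\bm{v}_{\omega,\bm\alpha}\|_X$ in a manner uniform in $\omega$.

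First I would recast \eqref{z_omega} as Volterra integral equations,
\begin{equation*}
z_\omega(x) = x - \omega^2 \int_0^x (x-s)\, q(s)\, z_\omega(s)\, ds, \qquad \widetilde z_\omega(x) = 1 - \omega^2 \int_0^x (x-s)\, q(s)\, \widetilde z_\omega(s)\, ds.
\end{equation*}
A direct fixed point (or Gronwall) argument, using that $|\omega|^2 Q \le 1/4$ on $\Omega_Q$, gives uniform bounds $\|z_\omega\|_{L^\infty}, \|\widetilde z_\omega\|_{L^\infty} \le C(Q)$, and the ODE itself promotes these to $\|z_\omega\|_{H^2}, \|\widetilde z_\omega\|_{H^2} \le C(Q)$. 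Substituting back into the right-hand side above yields $\|z_\omega - x\|_{L^\infty} \le C(Q)|\omega|^2$, and since $(z_\omega - z_0)'' = -\omega^2 q z_\omega$ we also get the second-derivative bound, hence
\begin{equation*}
\|z_\omega - z_0\|_{H^2} + \|\widetilde z_\omega - \widetilde z_0\|_{H^2} \le C(Q)\, |\omega|^2.
\end{equation*}

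Next I would recover the coefficients from $v_1 := \alpha_1 z_\omega + \alpha_2 \widetilde z_\omega$ by evaluating at the origin: the initial conditions in \eqref{z_omega} give $\alpha_2 = v_1(0)$ and $\alpha_1 = v_1'(0)$. The 1-D Sobolev embedding $H^2(0,1) \hookrightarrow C^1([0,1])$ therefore supplies an absolute constant $c$ with $|\alpha_1| + |\alpha_2| \le c\, \|v_1\|_{H^2} = c\,\|\bm{v}_{\omega,\bm\alpha}\|_X$. Writing $\bm{v}_{\omega,\bm\alpha} - \bm{v}_{0,\bm\alpha} = (\alpha_1(z_\omega - x) + \alpha_2(\widetilde z_\omega - 1), 0)$ and combining with the $H^2$ closeness from the previous step proves the first inequality with $\sigma_1 = 2 c\, C(Q)$. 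For the second inequality, note that $\bm{v}_{0,\bm\alpha} = (\alpha_1 x + \alpha_2, 0)$ has vanishing second derivative, so its $H^2$-norm is just the $L^2$-norm of a linear polynomial. The Gram matrix of $\{x, 1\}$ in $L^2(0,1)$ is $\bigl(\begin{smallmatrix} 1/3 & 1/2 \\ 1/2 & 1 \end{smallmatrix}\bigr)$, with determinant $1/12 > 0$ and hence positive definite; this yields an absolute constant $c_0 > 0$ with $\|\bm{v}_{0,\bm\alpha}\|_X^2 \ge c_0(|\alpha_1|^2 + |\alpha_2|^2)$. The uniform $H^2$-bounds on $z_\omega, \widetilde z_\omega$ give $\|\bm{v}_{\omega,\bm\alpha}\|_X^2 \le 2 C(Q)^2 (|\alpha_1|^2 + |\alpha_2|^2)$, and combining these two estimates yields the second inequality with $\sigma_2 = c_0/(2 C(Q)^2)$.

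The only mild subtlety is that the estimates must hold uniformly across the entire disk $\Omega_Q$, not merely in a neighborhood of $\omega = 0$ (where Lemma~\ref{LEM y} would suffice). This is precisely why $\Omega_Q$ was defined with radius $1/(2\sqrt{Q})$: the smallness factor $|\omega|^2 Q \le 1/4$ appears both in the Volterra contraction argument giving the uniform $L^\infty$ (and hence $H^2$) bounds on $z_\omega, \widetilde z_\omega$, and in the resulting bound $\|z_\omega - z_0\|_{H^2} \le C(Q)|\omega|^2$. No compactness or perturbation theory of subspaces is needed here — everything follows from the elementary Volterra analysis together with the evaluation-at-zero trick for recovering $(\alpha_1, \alpha_2)$.
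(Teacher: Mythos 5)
Your argument is correct, and both inequalities come out with the right structure, but your route to the first inequality differs noticeably from the paper's. You estimate $\|z_\omega - z_0\|_{H^2}$ and $\|\widetilde z_\omega - \widetilde z_0\|_{H^2}$ separately (essentially reproving the content of Lemma~\ref{LEM y} uniformly on $\Omega_Q$), and then recover the coefficients $\alpha_1,\alpha_2$ from $v_\omega$ via the evaluation-at-zero trick and the $H^2 \hookrightarrow C^1$ embedding. The paper instead bypasses the basis functions and the coefficients entirely: it observes that $v_0$ is affine, so $(v_\omega - v_0)'' = v_\omega'' = -\omega^2 q\, v_\omega$, and that $v_\omega - v_0$ vanishes at $0$ together with its derivative, so Taylor's formula with remainder gives $\|v_\omega - v_0\|_{L^2} \le \tfrac{1}{\sqrt{12}}\|(v_\omega - v_0)''\|_{L^2}$, leading directly to $\|\bm{v}_\omega - \bm{v}_0\|_X \lesssim Q|\omega|^2\|v_\omega\|_{L^2}$. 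This is shorter, keeps the constant explicit, and avoids any dependence on a Sobolev embedding constant; your version buys a bit more transparency about where the $\alpha_j$ go, at the cost of some extra machinery. For the second inequality you fill in the details the paper leaves to the reader — lower bound on $\|\bm{v}_0\|_X$ via the positive-definite Gram matrix of $\{x,1\}$ in $L^2(0,1)$, upper bound on $\|\bm{v}_\omega\|_X$ via the uniform $H^2$ bounds on $z_\omega,\widetilde z_\omega$ — which is exactly the intended argument. One small point worth noting: since $\alpha_1,\alpha_2$ are complex, you should read the Gram-matrix estimate as a bound for the Hermitian form $\overline{\bm{\alpha}}^{T} G\, \bm{\alpha} \ge \lambda_{\min}(G)|\bm{\alpha}|^2$; this is fine because $G$ is a real symmetric positive-definite matrix, but it deserves a word.
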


\vspace{.1in}

Using the above lemmas we now establish a key result that will be used in the proof of the main theorem.

\begin{lemma} \label{LEM proj omega}
Suppose that \eqref{M and chi2} and \eqref{q assumption} hold, and let $M$ and $Q$ be defined as in \eqref{M and chi2} and \eqref{q assumption}. Let $r>0$ and $\omega \in \Omega_Q$ be such that \eqref{contraction cond} holds. Let $\bm{v}_\omega \in N_\omega$ be given by \eqref{v_omega basis expansion} (in the notation we suppress its dependence on $\bm{\alpha}$) and assume $\|\bm{v}_\omega\|_X \leq r$. Finally, let $\bm{w}=\bm{w}(\bm{v}_\omega)\in \overline{B_r^{N^\perp_\omega}}(0)$ be defined by the fixed point equation \eqref{w fixed point eq and B}. 
There exists a positive constant $c_Q(\chi_2)$ (depending only on $Q$ and $\chi_2$) such that if in addition

\begin{equation} \label{omega^2 bound lemma}
|\omega|^2 \leq \frac{c_Q(\chi_2)}{M (1 + M r)},
\end{equation}

\n then

\begin{equation*}
\|P_{R_\omega^\perp} A (\bm{v}_\omega + \bm{w}(\bm{v}_\omega))\|_X \geq c_Q(\chi_2) \|\bm{v}_\omega\|^2_X.
\end{equation*}

\end{lemma}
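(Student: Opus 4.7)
The plan is to compare the $\omega$-dependent quantity $P_{R_\omega^\perp} A(\bm{v}_\omega + \bm{w})$ with its $\omega = 0$ analogue $P_{R_0^\perp} A(\bm{v}_0)$, where $\bm{v}_0 := \bm{v}_{0,\bm{\alpha}} \in N_0$ corresponds to the same coefficient vector $\bm{\alpha}$ as $\bm{v}_\omega = \bm{v}_{\omega,\bm{\alpha}}$. The starting point is the quadratic lower bound from Lemma~\ref{LEM proj 0},
$\|P_{R_0^\perp} A(\bm{v}_0)\|_X \geq C(\chi_2)\|\bm{v}_0\|_X^2,$
combined with the second estimate of Lemma~\ref{LEM v_omega - v_0}, which yields
$\|P_{R_0^\perp} A(\bm{v}_0)\|_X \geq C(\chi_2)\sigma_2 \|\bm{v}_\omega\|_X^2.$

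Next, by the reverse triangle inequality, it suffices to show that the error
$E := \|P_{R_\omega^\perp} A(\bm{v}_\omega + \bm{w}) - P_{R_0^\perp} A(\bm{v}_0)\|_X$
is at most one half of the quantity above. I split $E$ as
\begin{equation*}
E \leq \|P_{R_\omega^\perp} - P_{R_0^\perp}\|_{\CL(X)} \cdot \|A(\bm{v}_0)\|_X + \|A(\bm{v}_\omega + \bm{w}) - A(\bm{v}_0)\|_X.
\end{equation*}
The first factor is controlled by Lemma~\ref{LEM PN_omega} (with $R^\perp$ in place of $N$), giving $O(|\omega|^2)$ with a constant depending only on $Q$, while $\|A(\bm{v}_0)\|_X \leq CM\|\bm{v}_0\|_X^2 \leq 4CM\|\bm{v}_\omega\|_X^2$ by Lemma~\ref{LEM A Lip} and Lemma~\ref{LEM v_omega - v_0}. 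For the second term, the Lipschitz estimate of Lemma~\ref{LEM A Lip} gives a factor $CM \cdot \max\{\|\bm{v}_\omega + \bm{w}\|_X, \|\bm{v}_0\|_X\} \leq 2CM\|\bm{v}_\omega\|_X$ (using the contraction hypothesis \eqref{contraction cond} to bound $\|\bm{w}\|_X$ and the smallness of $|\omega|^2$ to bound $\|\bm{v}_0\|_X$), multiplied by
\begin{equation*}
\|\bm{v}_\omega + \bm{w} - \bm{v}_0\|_X \leq \|\bm{v}_\omega - \bm{v}_0\|_X + \|\bm{w}\|_X \leq \sigma_1|\omega|^2\|\bm{v}_\omega\|_X + 4CM|\omega|^2 \gamma_\omega r \|\bm{v}_\omega\|_X,
\end{equation*}
where the two terms come from Lemma~\ref{LEM v_omega - v_0} and Lemma~\ref{LEM B}(ii) combined with $\|\bm{v}_\omega\|_X \leq r$.

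Using Lemma~\ref{LEM gamma_omega} to uniformly bound $\gamma_\omega$ over $\Omega_Q$ by a constant depending only on $Q$, and collecting everything, I obtain an estimate of the form
\begin{equation*}
E \leq C_Q\, M |\omega|^2 (1 + Mr)\,\|\bm{v}_\omega\|_X^2,
\end{equation*}
where $C_Q$ depends only on $Q$. Choosing
$c_Q(\chi_2) := \min\bigl\{\tfrac{1}{2}C(\chi_2)\sigma_2,\ C(\chi_2)\sigma_2/(2C_Q)\bigr\},$
the hypothesis $|\omega|^2 \leq c_Q(\chi_2) / (M(1+Mr))$ forces $E \leq \tfrac{1}{2}C(\chi_2)\sigma_2\|\bm{v}_\omega\|_X^2$, and the reverse triangle inequality delivers the claim.

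The main obstacle is a bookkeeping one: I must carefully track how each intermediate constant depends on $Q$, $\chi_2$, $M$, and $r$, so that the threshold for $|\omega|^2$ has precisely the stated structure $c_Q(\chi_2)/(M(1+Mr))$. In particular, the factor $1 + Mr$ arises solely from the $4CM\gamma_\omega r$ term inside the estimate for $\bm{w}$, and it is crucial that $\gamma_\omega$ has a $Q$-only upper bound near the origin (Lemma~\ref{LEM gamma_omega}) and that $\sigma_1,\sigma_2$ depend only on $Q$ (Lemma~\ref{LEM v_omega - v_0}); otherwise the dependence of $c_Q(\chi_2)$ on $r$ or $M$ would leak into the final threshold.
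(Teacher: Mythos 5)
Your proposal is correct and follows essentially the same route as the paper's proof: the identical decomposition of the error into a projection-difference term and an $A$-Lipschitz term, the same reliance on Lemmas~\ref{LEM proj 0}, \ref{LEM A Lip}, \ref{LEM B}, \ref{LEM PN_omega}, \ref{LEM gamma_omega} and \ref{LEM v_omega - v_0}, and the same final balancing to isolate the $M(1+Mr)$ structure. The only slight divergence is cosmetic — you absorb $(1+|\omega|^2\sigma_1)$-type factors into crude numerical constants like $2$ or $4$ where the paper keeps them explicit and bounds them at the end, but since $|\omega|^2\sigma_1$ is uniformly bounded on $\Omega_Q$ by a $Q$-only quantity, this only changes the $Q$-dependent constants, not the logic.
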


\begin{remark}
If the nonlinear susceptibility $\chi_2$ is a positive constant, then in the above lemma, one can take $c_Q(\chi_2) = c_q \chi_2 $, where the constant $c_q>0$ only depends on $Q$.
\end{remark}

\begin{proof}
Note that $\bm{v}_0$ is given by the formula \eqref{v_omega basis expansion} with $\omega = 0$, yielding $\bm{v}_0=\alpha_1+\alpha_2 x$. To somewhat simplify the notation let us write $\bm{w} = \bm{w}(\bm{v}_\omega)$. We start with the inequality

\begin{equation*}
\begin{split}
\|P_{R_\omega^\perp} A (\bm{v}_\omega + \bm{w}) - P_{R_0^\perp} A (\bm{v}_0) \|_X &\leq \left\|P_{R_\omega^\perp} \left[ A(\bm{v}_\omega + \bm{w}) - A(\bm{v}_0) \right] \right\|_X + \left\| \left( P_{R_\omega^\perp} - P_{R_0^\perp} \right) A(\bm{v}_0) \right\|_X 
\\
&=: I_1 + I_2.
\end{split}
\end{equation*}

\n Invoking the first inequality of Lemma~\ref{LEM A Lip}, we can estimate

\begin{equation*}
I_2 \leq C M p(\omega) \|\bm{v}_0\|_X^2,
\qquad \qquad
p(\omega) = \|P_{R_\omega^\perp} - P_{R_0^\perp}\|_{\CL(X)}.
\end{equation*}

\n Using the second inequality of the same lemma we also have

\begin{equation} \label{I1}
I_1 \leq C M \max\{\|\bm{v}_0\|_X, \|\bm{v}_\omega\|_X, \|\bm{w}\|_X\} \big( \|\bm{v}_\omega - \bm{v}_0\|_X +  \|\bm{w}\|_X \big).
\end{equation}

\n Lemma~\ref{LEM v_omega - v_0} implies the bound 

\begin{equation} \label{v_omega -v0 in proof} 
\|\bm{v}_\omega - \bm{v}_0\|_X \leq |\omega|^2 \sigma_1 \|\bm{v}_\omega \|_X,
\end{equation}

\n which in turn gives

\begin{equation} \label{v0 by vomega}
\|\bm{v}_0\|_X \leq \left(1 + |\omega|^2  \sigma_1\right)  \|\bm{v}_\omega \|_X.
\end{equation}

\n The last estimate will be used inside the maximum in \eqref{I1}. Next, we can estimate $\bm{w}$ by using the inequality \eqref{w(v) v^2 bound} of Lemma~\ref{LEM B}, which reads

\begin{equation} \label{w v_omega^2}
\|\bm{w}\|_X \leq 4C M |\omega|^2 \gamma_\omega \|\bm{v}_\omega\|_X^2,
\end{equation}

\n where $\gamma_\omega$ is defined by \eqref{gamma}. The hypothesis \eqref{contraction cond}, combined with the above inequality also gives

\begin{equation} \label{w 1/4 v}
\|\bm{w}\|_X \leq  \|\bm{v}_\omega\|_X.
\end{equation}
\n We may without loss of generality assume that all the absolute constants, $C$, in the above inequalities are the same. Let us now use \eqref{v0 by vomega} and \eqref{w 1/4 v} to bound the maximum in \eqref{I1} and let us use \eqref{v_omega -v0 in proof} and \eqref{w v_omega^2} to bound the remaining terms in \eqref{I1}. We arrive at 

\begin{equation*}
I_1 \leq C M |\omega|^2 \left(1 + |\omega|^2  \sigma_1 \right) \big[ \sigma_1 + 4 C M \gamma_\omega r \big] \ \|\bm{v}_\omega\|_X^2.
\end{equation*}

\n Using \eqref{v0 by vomega} in the bound for $I_2$ we get

\begin{equation*}
I_2 \leq C M p(\omega) \big[ 1 +|\omega|^2 \sigma_1 \big]^2 \ \|\bm{v}_\omega\|_X^2.
\end{equation*}

\n Invoking Lemma~\ref{LEM proj 0}, for a constant $C_2:=C(\chi_2)>0$ we have 

\begin{equation} \label{z0 lower bound}
\|P_{R_0^\perp} A (\bm{v}_0)\|_X \geq C_2 \|\bm{v}_0\|_X^2 \geq C_2 \sigma_2 \|\bm{v}_\omega\|_X^2, 
\end{equation}

\n where in the last step we used Lemma~\ref{LEM v_omega - v_0}. Now, using the triangle inequality and combining the above inequalities we obtain

\begin{equation*}
\|P_{R_\omega^\perp} A (\bm{v}_\omega + \bm{w})\|_X \geq \|P_{R_0^\perp} A (\bm{v}_0) \|_X - I_1 - I_2 \geq \kappa \|\bm{v}_\omega\|_X^2,
\end{equation*}  

\n where

\begin{equation*}
\kappa = C_2 \sigma_2 - C M \left( 1 +|\omega|^2 \sigma_1 \right) \left[ |\omega|^2 \left( \sigma_1 + 4C M \gamma_\omega r \right) + p(\omega) \left( 1 +|\omega|^2 \sigma_1 \right)   \right].
\end{equation*}

\n It remains to make sure that $\kappa$ is positive, e.g. by requiring

\begin{equation*}
\kappa \geq \frac{1}{2} C_2 \sigma_2,  
\end{equation*}

\n which is insured provided

\begin{equation} \label{long}
|\omega|^2 \left[ \sigma_1 + 4C M \gamma_\omega r \right] + p(\omega) \left[ 1 +|\omega|^2 \sigma_1 \right]
\leq
\frac{C_2 \sigma_2}{2C M \left( 1 +|\omega|^2 \sigma_1 \right)}.
\end{equation}

\n Lemma~\ref{LEM PN_omega} asserts that $p(\omega) \leq C_3 |\omega|^2$ for $|\omega|<\delta$, with $C_3$ and $\delta>0$ depending only on $Q$. Therefore, by continuity,  the function $p(\omega)/ |\omega|^2$ is bounded on the disk $\Omega_Q$. Bounding $\gamma_\omega$ (which is also continuous) by its maximum on $\Omega_Q$ we now have that, for some constant $C_Q$

\begin{equation*}
|\omega|^2 \left[ \sigma_1 + 4 C M \gamma_\omega r \right] + p(\omega) \left[ 1 +|\omega|^2 \sigma_1 \right]
\leq C_Q |\omega|^2 (1+M r).
\end{equation*}

\n Similarly, the right hand side of \eqref{long} can be bounded from below by its minimum value on $\Omega_Q$:

\begin{equation*}
\frac{C_2 \sigma_2}{2C M \left( 1 +|\omega|^2 \sigma_1 \right)} \geq \frac{\tilde{c}_Q C_2}{M}.
\end{equation*}

\n Thus, \eqref{long} will be satisfied, if we impose

\begin{equation*}
C_Q |\omega|^2 (1+M r) \leq \frac{\tilde{c}_Q C_2}{M}, ~~\hbox{ or } ~ |\omega|^2\leq \frac{\tilde{c}_Q C_2}{C_Q M(1+Mr)}.
\end{equation*}

\n The assertion of this lemma now follows if we take $c_Q(\chi_2)=\min \{ \frac{\tilde c_Q C_2}{C_Q},\frac12 C_2\sigma_2  \}$.

\end{proof}

\subsection{Concluding the proof of Theorem~\ref{THM 1}}

Fix $0 \neq \bm{u}$ and let $r = \|\bm{u}\|_X$. Suppose that $\omega \in \Omega_Q$, where $\Omega_Q$ is the disk in the complex plane given by \eqref{Omega_Q}. Further, suppose that \eqref{contraction cond} holds, more precisely suppose

\begin{equation*}
|\omega|^2 < \frac{1}{\displaystyle 4CM r \max_{\omega \in \Omega_Q} \gamma_\omega} =: \frac{C_Q}{M r},
\end{equation*}

\n Recall that $\gamma_\omega$ is a continuous function of $\omega \in \Omega_Q$ (cf. Lemma~\ref{LEM gamma_omega}) so that the above maximum is attained. Finally, assume the inequality \eqref{omega^2 bound lemma} also holds. In other words, putting these three conditions together we are assuming that

\begin{equation} \label{omega^2 min}
|\omega|^2 \leq \min \left\{ \frac{C_Q}{M r}, \frac{c_Q(\chi_2)}{M (1+Mr)}, \frac{1}{4Q} \right\}.
\end{equation}

\n For the sake of a contradiction, let us now assume $T_\omega(\bm{u}) = 0$. Set $\bm{v} = P_{N_\omega} \bm{u}$, then Theorem~\ref{THM projection} implies that we must have $\bm{u} = \bm{v} + \bm{w}(\bm{v})$ and 

$$P_{R_\omega^\perp} A (\bm{v} + \bm{w}(\bm{v})) = 0.$$ 

\n But Lemma~\ref{LEM proj omega} gives the estimate

\begin{equation*}
\|P_{R_\omega^\perp} A (\bm{v} + \bm{w}(\bm{v}))\|_X \geq c_Q(\chi_2) \|\bm{v}\|^2_X.
\end{equation*}

\n This implies that $\bm{v}=0$, and consequently $\bm{w}(\bm{v})=0$ and $\bm{u}=0$, which is a contradiction. Note that if we define $\tilde c_Q(\chi_2)= \min \{ C_Q  \|\chi_2\|_{L^\infty}, c_Q(\chi_2), \| \chi_2\|_{L^\infty}/4 Q\}$ then the condition \eqref{omega^2 min} is satisfied, provided

\begin{equation*}
|\omega|^2 \leq \frac{\tilde c_Q(\chi_2)}{M (1+Mr)}.
\end{equation*}

\n This completes the proof of Theorem \ref{THM 1}.

\section{Existence in the presence of a force term}\label{section5}
\setcounter{equation}{0} 

\subsection{The main idea and the outline of the proof}

The goal here is to prove Theorem~\ref{THM exist}. Let us separate out the first nonlinearity in \eqref{trans rad f}, which contains complex conjugation:

\begin{equation*}
T^\diamond = F + G,
\end{equation*}

\n where $F, G$ are given by the following formulas (as usual, sometimes we will write $F_\omega$ to indicate the dependence of $F$ on $\omega$)

\begin{equation} \label{F}
\langle F (\bm{u}), \bm{\phi} \rangle_{X_{\diamond}} = \int_{D} (u_1'' + \omega^2 q u_1)  \overline{\phi}_2 + u_2 \left( \overline{\phi}_1'' + 4 \omega^2 q \overline{\phi}_1 \right) dx + \omega^2 \int_{D} \chi_2 u_1^2 \overline{\phi}_1 \, \frac{dx}{x}
\end{equation}

\n and

\begin{equation} \label{G}
\langle G (\bm{u}), \bm{\phi} \rangle_{X_{\diamond}} = \omega^2 \int_{D} \chi_1 u_2 \overline{u}_1 \overline{\phi}_2 \, \frac{dx}{x}.
\end{equation}

\n The reason for separating out the nonlinearity containing conjugation is that now $F:X_\diamond \to X_\diamond$ is Fr{\'e}chet differentiable, in fact $C^1$, and for any $\bm{p}, \bm{u} \in X_\diamond$

\begin{equation} \label{F linearization}
F(\bm{p} + \bm{u}) = F(\bm{p}) + F'_{\bm{p}}(\bm{u}) + E(\bm{u}),
\end{equation}

\n where $F'_{\bm{p}}: X_\diamond \to X_\diamond$ denotes the Fr{\'e}chet derivative of $F$ at the point $\bm{p}$ and is given by

\begin{equation} \label{F'_p}
\langle F'_{\bm{p}} (\bm{u}), \bm{\phi} \rangle_{X_{\diamond}} = \int_{D}\left[ (u_1'' + \omega^2 q u_1)  \overline{\phi}_2 + u_2 \left( \overline{\phi}_1'' + 4 \omega^2 q \overline{\phi}_1 \right)\right] dx + 2\omega^2 \int_{D} \chi_2 p_1 u_1 \overline{\phi}_1 \, \frac{dx}{x}.
\end{equation}

\n Often, we will also write $F'_{\omega, \bm{p}}$ to indicate the dependence on $\omega$. Finally, $E : X_\diamond \to X_\diamond$ is the error term given by

\begin{equation} \label{E}
\langle E(\bm{u}), \bm{\phi} \rangle_{X_\diamond} = \omega^2 \int_{D} \chi_2 u_1^2 \overline{\phi}_1  \, \frac{dx}{x}.
\end{equation}

We are going to search for solutions to the problem \eqref{trans rad f} near the point $\bm{p}$ (which for now is an arbitrary nonzero point), i.e., we consider the equation $T^\diamond(\bm{p} + \bm{u}) = \bm{f}$, where $\bm{u}$ is the unknown variable, assumed to be small. The reason for introducing the point $\bm{p}$ is to create an invertible operator so that the original equation can be rewritten as a fixed point equation. Let us rewrite our equation as

\begin{equation*}
F(\bm{p}+\bm{u}) + G(\bm{p}+\bm{u}) = \bm{f}.
\end{equation*}

\n In Lemma~\ref{LEM F' invertible} below, it will be shown that under a suitable condition on $\omega$ the operator $F'_{\bm{p}}$ is invertible. But then, using \eqref{F linearization} in the above equation and inverting $F'_{\bm{p}}$, we obtain a fixed point equation

\begin{equation} \label{u=H(u)}
\bm{u} = H(\bm{u}),
\end{equation}

\n where

\begin{equation*}
H(\bm{u}) = (F'_{\bm{p}})^{-1} \left[\bm{f} - F(\bm{p}) - E(\bm{u}) - G(\bm{p} + \bm{u}) \right].
\end{equation*}

\n We will utilize the freedom to choose $\bm{p}$, in particular it will be chosen so that $F(\bm{p}) = \bm{f}$. This equation is easier to analyze, as it corresponds to two decoupled ODEs. Our approach is thus as follows: (1) for a given $0\neq f \in L^2$ and $\bm{f} = (0,f)$ find $\bm{p}$ such that

\begin{equation} \label{F(p)=f and F' invert}
F_\omega(\bm{p}) = \bm{f}, \qquad \text{and} \qquad F'_{\omega, \bm{p}} \quad \text{is invertible}.
\end{equation}

\n (2) solve the equation $\bm{u}=H(\bm{u})$ for this choice of $\bm{p}$. To achieve (1) we require a certain restriction on $\omega$, namely that $\omega \notin \Lambda$, where $\Lambda$ is given by (\ref{Lambda}).

\begin{remark} \label{REM choice f=0}
\normalfont
Such a choice of $\bm{p}$ is not possible when $f=0$. The reason is the scaling symmetry of the equation $F(\bm{p}) = 0$, which implies that if $\bm{p}$ is a solution of this equation, then there are other solutions accumulating near $\bm{p}$. This immediately prevents $F'_{\bm{p}}$ from being invertible.
\end{remark}

\n Note that Theorem~\ref{THM exist} provides an existence result for any $\omega \in \CG$, with a smallness assumption on $\| \chi_1 \|_{L^\infty}$ which doesn't depend on $\omega$. Therefore when analyzing the fixed point equation \eqref{u=H(u)} we need to ensure that $H$ is a contraction for all $\omega \in \CG$. The main difficulty stems from the dependence of $\bm{p}$ on $\omega$. Indeed, the operator $F$ depends on $\omega$, hence so does the solution $\bm{p}$ of the equation $F(\bm{p}) = \bm{f}$. Consequently, the operator $(F'_{\omega, \bm{p}})^{-1}$ depends on $\omega$ also through $\bm{p}$, and we need to control its operator norm, uniformly in $\omega$. This is where the ``buffer zone" excluded from the set $\CG$ \eqref{G set} will be essential.

\subsection{The equation $F_\omega(p) = f$ and invertibility of $F'_{\omega, p}$}

Let $z_\omega$ be defined by \eqref{z_omega}. Recall that $z_\omega$ is a basis for the kernel of $L^\diamond$ and $z_{2{\overline{\omega}}}$ is a basis for the orthogonal complement of the range of $L^\diamond$, where $L^\diamond$ is the linearization of $T^\diamond$ at $0$, cf. Remark~\ref{REM L diamond N and R^perp}.

\begin{lemma} \label{LEM z_omega properties}
Let $z_\omega(x)$ be defined by \eqref{z_omega}, then $\omega \mapsto z_\omega(x)$ is an entire function of $\omega$ for each fixed $x \in [0,1]$. Moreover, if $D \subset \CC$ is a compact set, then the functions $z_\omega(x), z_\omega'(x)$ and $\partial_\omega z_\omega(x)$ are bounded, as functions of two variables $(\omega, x)$ in the set $D \times [0,1]$. 
\end{lemma}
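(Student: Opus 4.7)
The plan is to recast the initial value problem \eqref{z_omega} as a Volterra integral equation and then solve it by a Picard-type power-series iteration in the parameter $\omega^2$. Integrating the ODE twice against the initial conditions $z_\omega(0)=0$, $z_\omega'(0)=1$ gives
\begin{equation*}
z_\omega(x) \;=\; x \;-\; \omega^2 \int_0^x (x-t)\, q(t)\, z_\omega(t)\, dt, \qquad x \in [0,1].
\end{equation*}
I would then define $w_0(x) := x$ and, for $n \geq 0$,
\begin{equation*}
w_{n+1}(x) \;:=\; \int_0^x (x-t)\, q(t)\, w_n(t)\, dt,
\end{equation*}
so that formally
\begin{equation*}
z_\omega(x) \;=\; \sum_{n=0}^\infty (-\omega^2)^n\, w_n(x).
\end{equation*}
Each $w_n$ is independent of $\omega$, so every partial sum is a polynomial in $\omega$ (entire as a function of $\omega$).

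The first main step is to justify convergence. By induction on $n$, using the bound $|q(t)| \leq Q := \|q\|_{L^\infty}$ (which is finite since $q \in C[0,1]$), one shows
\begin{equation*}
|w_n(x)| \;\leq\; \frac{Q^n\, x^{2n+1}}{(2n+1)!}, \qquad x \in [0,1].
\end{equation*}
The base case $n=0$ is trivial, and the inductive step uses $|x-t| \leq x$ and the elementary integration $\int_0^x t^{2n+1} dt = x^{2n+2}/(2n+2)$ followed by the factorial identity $(2n+2)(2n+1)!(2n+3) \geq (2n+3)!$ (any comparable crude bound suffices). Given any compact $D \subset \CC$ with $R := \max_{\omega \in D} |\omega|$, we obtain
\begin{equation*}
\sum_{n=0}^\infty |\omega|^{2n}\, |w_n(x)| \;\leq\; \sum_{n=0}^\infty \frac{(R^2 Q)^n}{(2n+1)!} \;<\; \infty,
\end{equation*}
uniformly in $(\omega,x) \in D \times [0,1]$. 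Hence the series converges absolutely and uniformly on $D \times [0,1]$, its sum solves the integral equation, and by the uniqueness of solutions to the IVP it equals $z_\omega(x)$. Being a locally uniform limit of polynomials in $\omega$, the map $\omega \mapsto z_\omega(x)$ is entire for every fixed $x$, and $z_\omega(x)$ is bounded on $D \times [0,1]$.

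For $z_\omega'(x)$, differentiate the integral equation in $x$ to get
\begin{equation*}
z_\omega'(x) \;=\; 1 \;-\; \omega^2 \int_0^x q(t)\, z_\omega(t)\, dt,
\end{equation*}
from which the bound $|z_\omega'(x)| \leq 1 + R^2 Q \cdot \sup_{D\times[0,1]} |z_\omega|$ on $D \times [0,1]$ is immediate. For $\partial_\omega z_\omega(x)$, I would differentiate the power series term-by-term,
\begin{equation*}
\partial_\omega z_\omega(x) \;=\; \sum_{n=1}^\infty (-1)^n\, 2n\, \omega^{2n-1}\, w_n(x),
\end{equation*}
which is legitimate inside the domain of convergence of a power series; the same factorial estimate shows this new series is also uniformly convergent on $D \times [0,1]$, hence bounded there. (Alternatively, one can invoke Cauchy's integral formula for $\partial_\omega z_\omega(x)$ over a slightly enlarged compact set, using the already established uniform bound on $|z_\omega(x)|$.)

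The only mildly delicate point is bookkeeping the factorial estimate so that the comparison series converges and the interchange of differentiation and summation is justified; everything else is routine Picard iteration.
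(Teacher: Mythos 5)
Your proof is correct. The paper does not actually supply a proof of this lemma: it simply calls it ``a classical result about solutions of differential equations involving a complex parameter'' and cites Olver's book. Your Picard--iteration/power--series argument is precisely the standard proof of that classical fact, so you have, in effect, filled in what the paper outsources to a reference. The structure is sound: converting the IVP into the Volterra equation $z_\omega(x)=x-\omega^2\int_0^x(x-t)q(t)z_\omega(t)\,dt$, expanding in powers of $\omega^2$ with $\omega$-independent coefficients $w_n$, proving uniform convergence on compacta, and then concluding entirety as a locally uniform limit of polynomials in $\omega$, with boundedness of $z_\omega$, $z_\omega'$ and $\partial_\omega z_\omega$ following from the integral equation and term-by-term differentiation.

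One small bookkeeping remark: the crude route you sketch for the inductive step (replacing $|x-t|$ by $x$, then integrating $t^{2n+1}$) actually yields $|w_{n+1}(x)|\leq Q^{n+1}x^{2n+3}/(2n+2)!$, which does not reproduce the stated $(2n+3)!$ in the denominator. Either compute the inner integral exactly, $\int_0^x (x-t)t^{2n+1}\,dt = x^{2n+3}/[(2n+2)(2n+3)]$, which closes the induction with the claimed factorial, or simply carry the weaker bound $|w_n(x)|\leq Q^n x^{2n+1}/n!$, which is trivial to propagate and still gives uniform convergence of $\sum_n |\omega|^{2n}|w_n(x)|$ on $D\times[0,1]$. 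As you note, the exact constant is immaterial; the glitch is purely expository and does not affect the conclusion.
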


This is a classical result about solutions of differential equations involving a complex parameter. For the proof we refer to \cite{olver74}.This lemma will be used in the sequel. We now address the invertibility of the operator $F'_{\omega, \bm{p}}$ defined by \eqref{F'_p}.

\begin{lemma} \label{LEM F' invertible}
Let $\bm{p} = (p_1, p_2) \in X_\diamond$ and $0 \neq \omega \in \CC$ be such that 

\begin{equation*}
\int_0^1 p_1 z_\omega z_{2\omega}  \, \frac{\chi_2 dx}{x} \neq 0,
\end{equation*}

\n then $F'_{\omega, \bm{p}}$ is an invertible operator on $X_\diamond$.
\end{lemma}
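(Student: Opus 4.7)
I would prove invertibility by the classical route: first show that $F'_{\omega,\bm{p}}$ is Fredholm of index $0$, then verify that its kernel is trivial. Writing $F'_{\omega,\bm{p}}=L^\diamond_\omega+K$, where $L^\diamond_\omega$ is the linearization at $\bm{u}=0$ (Fredholm of index~$0$ by Theorem~\ref{THM L} together with Remark~\ref{REM L diamond N and R^perp}) and
\begin{equation*}
\langle K(\bm{u}),\bm{\phi}\rangle_{X_\diamond}=2\omega^2\int_D\chi_2 p_1 u_1\overline{\phi}_1\,\frac{dx}{x},
\end{equation*}
the compactness of $K$ follows by mimicking Lemma~\ref{LEM K compact}: if $\bm{u}^n\rightharpoonup 0$ in $X_\diamond$, then $u_1^n\to 0$ in $C^1[0,1]$ by the compact embedding $H^2\hookrightarrow C^1$, and since $u_1^n(0)=0$ the mean value theorem gives $u_1^n(x)/x\to 0$ uniformly on $[0,1]$; as $\chi_2\in L^\infty$ and $p_1\in H^2_\diamond\subset L^\infty$, the right-hand side is $o(1)$ uniformly over $\|\bm{\phi}\|_{X_\diamond}\le 1$. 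Consequently $F'_{\omega,\bm{p}}$ is Fredholm of index $0$, reducing the task to showing injectivity.

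\textbf{Reduction to an ODE with three boundary conditions.} Suppose $F'_{\omega,\bm{p}}(\bm{u})=0$. Testing against $\bm{\phi}=(0,\phi_2)$ with $\phi_2:=u_1''+\omega^2 q u_1\in L^2$ forces $u_1''+\omega^2 q u_1=0$ a.e.; together with $u_1(0)=0$ (built into $H^2_\diamond$), this yields $u_1=c\,z_\omega$ for some $c\in\CC$. Testing next with $\bm{\phi}=(\phi_1,0)$ for $\phi_1\in C_c^\infty(D)$, and using the bound \eqref{u/x bounded by H^2 norm} to see that the source lies in $L^2$, I obtain $u_2\in H^2$ satisfying
\begin{equation*}
u_2''+4\omega^2 q u_2=-2c\omega^2\,\chi_2 p_1 z_\omega/x\quad\text{in }(0,1).
\end{equation*}
Allowing now $\phi_1\in H^2_\diamond$ to be general and integrating by parts twice leaves a boundary contribution of the form $u_2(1)\overline{\phi_1'(1)}-u_2'(1)\overline{\phi_1(1)}-u_2(0)\overline{\phi_1'(0)}$ (the $\phi_1(0)$ term drops because $\phi_1(0)=0$). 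Since $\phi_1'(0),\phi_1(1),\phi_1'(1)$ can be prescribed independently within $H^2_\diamond$, I conclude $u_2(0)=u_2(1)=u_2'(1)=0$.

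\textbf{Extracting the scalar condition and concluding.} Multiply the ODE for $u_2$ by $z_{2\omega}$ and integrate over $(0,1)$. Two integrations by parts transfer the derivatives onto $z_{2\omega}$; all boundary terms vanish thanks to $z_{2\omega}(0)=0$ combined with $u_2(0)=u_2(1)=u_2'(1)=0$, and the interior terms cancel because $z_{2\omega}''+4\omega^2 q z_{2\omega}=0$. What survives is precisely
\begin{equation*}
0=-2c\omega^2\int_0^1\chi_2\,p_1 z_\omega z_{2\omega}\,\frac{dx}{x},
\end{equation*}
so the nondegeneracy hypothesis together with $\omega\ne 0$ forces $c=0$, hence $u_1\equiv 0$. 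The residual equation for $u_2$ is then homogeneous with $u_2(0)=u_2(1)=u_2'(1)=0$: writing the general solution as $\alpha z_{2\omega}+\beta\widetilde{z}_{2\omega}$ and using $u_2(0)=0$ with $\widetilde{z}_{2\omega}(0)=1$ gives $\beta=0$, while the two conditions at $x=1$ would force $z_{2\omega}(1)=z_{2\omega}'(1)=0$ whenever $\alpha\ne 0$, contradicting $z_{2\omega}'(0)=1$ by uniqueness for the IVP \eqref{z_omega}. Hence $\bm{u}=\bm{0}$. The only delicate point in this plan is the readout of the three boundary conditions for $u_2$ from the variational identity, which relies on the independent prescribability of $\phi_1'(0),\phi_1(1),\phi_1'(1)$ in $H^2_\diamond$; everything else is routine.
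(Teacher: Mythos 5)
Your argument is correct and shares the paper's overall architecture: decompose $F'_{\omega,\bm p}=L^\diamond_\omega+K$ with $K$ compact, conclude Fredholm of index~$0$ from Theorem~\ref{THM L} and Remark~\ref{REM L diamond N and R^perp}, and then establish injectivity. Where you diverge is in the injectivity step, and it is worth comparing the two routes. The paper, after obtaining $u_1=\alpha z_\omega$, simply plugs the test function $\phi_1=z_{-2\overline\omega}\in H^2_\diamond$ into the weak identity; this choice annihilates the $u_2$-term because $\overline\phi_1''+4\omega^2 q\,\overline\phi_1=0$, and the symmetry $\overline{z_{-2\overline\omega}}=z_{2\omega}$ (cf.\ \eqref{z sym}) hands over the scalar obstruction directly. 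You instead promote the variational identity to the strong form $u_2''+4\omega^2q\,u_2=-2c\omega^2\chi_2p_1z_\omega/x$ with the three boundary conditions $u_2(0)=u_2(1)=u_2'(1)=0$ (the independence of $\phi_1'(0),\phi_1(1),\phi_1'(1)$ for $\phi_1\in H^2_\diamond$, e.g.\ via cubic polynomials, makes this readout legitimate), and then multiply by $z_{2\omega}$ and integrate by parts twice; the vanishing boundary terms and the ODE satisfied by $z_{2\omega}$ produce exactly the same scalar condition. This is the same computation performed on the strong rather than the weak form — your version is a bit longer and requires the intermediate $H^2$-regularity and boundary-condition recovery, but it makes visible why $z_{2\omega}$ is the distinguished cofunction. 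Your final step for $u_2=0$ (homogeneous ODE with three boundary conditions, hence trivial by uniqueness of the IVP for $z_{2\omega}$) is also a valid replacement for the paper's surjectivity-of-$\phi_1\mapsto\phi_1''+4\overline\omega^2q\phi_1$ argument. No gaps; just a slightly more pedestrian execution of the same idea.
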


\begin{proof}
Note that $F'_{\omega, \bm{p}}$ is the sum of the operator $L^\diamond$ (defined by \eqref{L}) and a compact operator. We showed in Theorem~\ref{THM L} and Remark~\ref{REM L diamond N and R^perp} that $L^\diamond:X_\diamond \to X_\diamond$ is a Fredholm operator of index 0. Consequently, $F'_{\omega, \bm{p}}$ is also a Fredholm operator of index 0 on the space $X_\diamond$. Therefore, to prove the invertibility it is enough to show that $F'_{\omega, \bm{p}}$ is injective. To that end suppose $F'_{\omega, \bm{p}}(\bm{u}) = 0$ and let us show that $\bm{u}=0$. Directly form the definition we have

\begin{equation} \label{F'=0 equations}
\begin{split}
\int_0^1 (u_1'' + \omega^2 q u_1)  \overline{\phi}_2 dx &= 0, \qquad \qquad \forall \phi_2 \in L^2
\\[.05in]
\int_0^1 u_2 \left[ \left( \overline{\phi}_1'' + 4 \omega^2 q \overline{\phi}_1 \right) +  2 \omega^2 \frac{\chi_2}{x} p_1 u_1 \overline{\phi}_1 \right] dx &= 0, \hspace{0.65in} \forall \phi_1 \in H^2_\diamond.
\end{split}
\end{equation}

\n The first equation clearly implies that $u_1'' + \omega^2 q u_1 = 0$ and since $u_1 \in H^2_\diamond$, we conclude that $u_1 = \alpha z_\omega$ for some $\alpha \in \CC$. Suppose $\alpha\ne 0$, and take $\phi_1 = z_{-2 \overline{\omega}}$. Since $\phi_1'' + 4 \overline{\omega}^2 q \phi_1 = 0$ the second equation of \eqref{F'=0 equations} implies

\begin{equation*}
\int_0^1 \frac{\chi_2}{x} p_1 z_\omega \, \overline{z_{-2{\overline{\omega}}}} \, dx = 0,
\end{equation*}

\n which contradicts the hypothesis of the lemma, because $\overline{z_{-2{\overline{\omega}}}} = z_{2\omega}$  (cf. \eqref{z sym}). We conclude that $\alpha$ must be zero and thus $u_1=0$. It now follows immediately from the second equation in (\ref{F'=0 equations}) that $u_2=0$, and as a consequence $\bm{u}=0$.

\end{proof}

Now we turn to solving the equation $F_\omega(\bm{p}) = \bm{f}$. Let $a_\omega$ be as in \eqref{a_omega} and introduce the functions

\begin{equation*}
k_1(\omega) = \int_0^1 z_\omega^2 z_{2\omega} \, \frac{\chi_2 dx}{x},
\qquad
k_2(\omega) = \int_0^1 z_\omega  a_\omega z_{2\omega} \, \frac{\chi_2 dx}{x},
\qquad
k_3(\omega) = \int_0^1 a_\omega^2 z_{2\omega} \, \frac{\chi_2 dx}{x}
\end{equation*}

\n Note that the function $\zeta$ defined by \eqref{zeta} can be rewritten as

\begin{equation} \label{zeta in terms of k_j}
\zeta(\omega) = k_2^2(\omega) - k_1(\omega) k_3(\omega).
\end{equation}

\n Using Lemma~\ref{LEM z_omega properties} and the estimate \eqref{u/x bounded by H^2 norm} we conclude that $k_j(\omega)$ are entire functions for $j=1,2,3$.

\begin{lemma} \label{LEM F(p)=f}
Let $f \in L^2$ and set $\bm{f} = (0, f)$, let $\CG = \CG_{\rho, \delta} \subset \CC$ be as in \eqref{G set}. Then we can decompose 

\begin{equation} \label{G set union G_j}
\CG = \bigcup_{j=1}^s \CG_j,
\end{equation}

\n where $s$ is some integer and $\CG_j$ are regular\footnote{i.e. $\CG_j$ is the closure of its interior} compact sets, such that for each $j=1,..,s$ and $\omega \in \CG_j$ there exists $\bm{p}_{\omega} \in X_\diamond$  with the following properties:

\begin{enumerate}
\item[(i)] $F(\bm{p}_{\omega}) = \bm{f}$ and $F'_{\omega, \bm{p}_{\omega}}$ is invertible. 

\item[(ii)] $\omega \mapsto \bm{p}_{\omega}$ is continuous as a mapping $\CG_j \to X_\diamond$.

\end{enumerate}

\n Furthermore, there exists $C = C(\rho, \delta, f, q, \chi_2)>0$ such that for any $j=1,..,s$ and any $\omega \in \CG_j$

\begin{equation} \label{F' inverse unif bound}
\|(F'_{\omega, \bm{p}_{\omega}})^{-1}\|_{\CL(X_\diamond)} \leq C.
\end{equation}

\end{lemma}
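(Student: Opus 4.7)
The plan is first to read $F_\omega(\bm{p})=\bm{f}=(0,f)$ componentwise in the splitting $X_\diamond=H^2_\diamond\times L^2$. The $L^2$-component is the ODE $p_1''+\omega^2 q p_1=f$ on $(0,1)$ subject to the single condition $p_1(0)=0$ forced by $p_1\in H^2_\diamond$, whose general solution is $p_1=a_\omega+\alpha z_\omega$ with $\alpha\in\CC$ free. The $H^2_\diamond$-component is the weak identity
\begin{equation*}
\int_0^1 p_2\bigl(\overline{\phi}_1''+4\omega^2 q \overline{\phi}_1\bigr)\,dx=-\omega^2\int_0^1\chi_2 p_1^2\,\overline{\phi}_1\,\frac{dx}{x},\qquad\forall\,\phi_1\in H^2_\diamond.
\end{equation*}
The map $B:\phi_1\mapsto\overline{\phi}_1''+4\omega^2 q\overline{\phi}_1$ sends $H^2_\diamond$ onto $L^2$ (invert the IVP for $\overline{\phi}_1$) with one-dimensional kernel spanned by $z_{-2\overline{\omega}}=\overline{z_{2\omega}}$ (cf.\ \eqref{z sym}), so the Fredholm alternative makes this identity solvable for $p_2\in L^2$ iff the right-hand side vanishes at $\overline{\phi}_1=z_{2\omega}$, i.e., $\int_0^1\chi_2 p_1^2 z_{2\omega}\,dx/x=0$. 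Substituting $p_1=a_\omega+\alpha z_\omega$ turns this compatibility condition into the quadratic $k_1(\omega)\alpha^2+2k_2(\omega)\alpha+k_3(\omega)=0$, whose discriminant is exactly $4\zeta(\omega)$.

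Next I would use that $\omega\in\CG$ forces $\zeta\neq 0$ to organize the branches of $\alpha$. Setting $\beta=k_1\alpha+k_2$ converts the quadratic into $\beta^2=\zeta$, so $\alpha=(\beta-k_2)/k_1$ with $\beta=\pm\sqrt{\zeta}$. Since $\zeta$ is holomorphic and nonvanishing on the compact set $\CG$, it admits single-valued holomorphic branches of $\sqrt{\zeta}$ on any simply connected neighborhood of any point of $\CG$; I would cover $\CG$ by finitely many such neighborhoods and pass to a regular compact refinement $\{\CG_j\}_{j=1}^s$. On each $\CG_j$ I would fix the sign of $\sqrt{\zeta}$ so that $\sqrt{\zeta}(\omega_0)=k_2(\omega_0)$ at every zero $\omega_0\in\CG_j$ of $k_1$ (where necessarily $k_2(\omega_0)^2=\zeta(\omega_0)\neq 0$); a Taylor expansion at such $\omega_0$ then shows that $\alpha(\omega)=(\sqrt{\zeta(\omega)}-k_2(\omega))/k_1(\omega)$ extends holomorphically across $\omega_0$, with limiting value $-k_3(\omega_0)/(2k_2(\omega_0))$. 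If any piece contains $k_1$-zeros requiring conflicting sign choices, further subdivision eliminates the conflict; by compactness only finitely many refinements are ever needed.

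With $\alpha(\omega)$ continuous on $\CG_j$, Lemma~\ref{LEM z_omega properties} and \eqref{a_omega} make $p_1=a_\omega+\alpha z_\omega\in H^2_\diamond$ continuous in $\omega$. Choosing a continuous right inverse for $B$ modulo its kernel and invoking Riesz representation for the bounded antilinear functional $\psi\mapsto-\omega^2\int\chi_2 p_1^2\,\overline{\phi_1(\psi)}\,dx/x$ on $L^2$ (boundedness from \eqref{u/x bounded by H^2 norm}) then produces a unique continuous $p_2=p_2(\omega)\in L^2$, yielding (ii). For the invertibility assertion in (i), Lemma~\ref{LEM F' invertible} reduces to checking that $\int p_1 z_\omega z_{2\omega}\chi_2\,dx/x\neq 0$; substituting $p_1=a_\omega+\alpha z_\omega$ gives $k_2+\alpha k_1=\sqrt{\zeta(\omega)}\neq 0$ on $\CG$. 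Finally, $\omega\mapsto F'_{\omega,\bm{p}_\omega}$ is continuous in the operator norm on $\CG_j$ (by \eqref{F'_p} and continuity of $\bm{p}_\omega$) and takes values in the open set of invertible bounded operators on $X_\diamond$; continuity of inversion combined with compactness of $\CG_j$ delivers \eqref{F' inverse unif bound}, with constant depending only on $\rho,\delta,f,q,\chi_2$. The main obstacle is the branch-selection step in the previous paragraph: single-valued tracking of the ``good'' root $\alpha(\omega)$ across zeros of $k_1$ is precisely what forces the decomposition of $\CG$ into the regular compact pieces $\CG_j$, and it relies crucially on the hypothesis $\zeta\neq 0$.
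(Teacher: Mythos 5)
Your proposal is correct and reaches the same central reduction as the paper: solving $F_\omega(\bm{p})=\bm{f}$ reduces to the quadratic $k_1\alpha^2+2k_2\alpha+k_3=0$, the invertibility criterion of Lemma~\ref{LEM F' invertible} becomes $k_1\alpha+k_2=\pm\sqrt{\zeta}\neq 0$, and the need for a single-valued branch of $\sqrt{\zeta}$ drives the decomposition $\CG=\bigcup_j\CG_j$. Where you genuinely depart from the paper is in the construction of the second component $p_2$: the paper writes $p_2=\widetilde{c}_\omega z_{2\omega}-b_\omega$ via an explicit Green's-function integral and then determines $\widetilde{c}_\omega$ from the boundary conditions $p_2(1)=p_2'(1)=0$, whereas you obtain $p_2\in L^2$ abstractly from the Fredholm alternative together with Riesz representation applied to the weak identity. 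This buys a real simplification: the paper's explicit $\widetilde{c}_\omega$ forces a case distinction at the zeros of $z_{2\omega}(1)$, which in turn enlarges the decomposition \eqref{G set long}; in your approach $p_2$ is well defined and continuous as soon as the compatibility condition holds, so the covering need only handle the zeros of $k_1$ and the branch cut for $\sqrt{\zeta}$. Two small points worth tightening: the functional $\psi\mapsto-\omega^2\int\chi_2 p_1^2\,\overline{\phi_1(\psi)}\,dx/x$ on $L^2$ is \emph{linear} in $\psi$, not antilinear (the conjugate-linear $\psi\mapsto\phi_1(\psi)$ composed with the conjugate-linear $\Lambda$ is linear); and the holomorphic extension of $\alpha$ across a zero $\omega_0$ of $k_1$ of arbitrary order is seen most cleanly from the identity $\alpha=(\sqrt{\zeta}-k_2)/k_1=-k_3/(\sqrt{\zeta}+k_2)$, using that $\sqrt{\zeta}+k_2$ is nonvanishing near $\omega_0$ once one fixes $\sqrt{\zeta(\omega_0)}=k_2(\omega_0)$, rather than a Taylor expansion which implicitly assumes a simple zero.
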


\begin{proof}
The equation $F_\omega(\bm{p}) = \bm{f}$ is the weak formulation of the problem

\begin{equation} \label{F(p)=f odes}
\begin{cases}
\displaystyle p_1'' + \omega^2 q p_1  = f, \hspace{0.7in} &\text{in} \ (0,1)
\\[.1in]
\displaystyle p_2'' + 4 \omega^2 q p_2  = - \frac{\omega^2}{x} \chi_2  p_1^2, &\text{in} \ (0,1)
\\[.1in]
p_1(0) = p_2(0) = p_2(1) = p_2'(1) = 0. 
\end{cases}
\end{equation}

\n Note that the two ODEs in the above system are decoupled, therefore we can easily analyze this system. We start by solving the first equation along with its corresponding boundary condition:

\begin{equation*}
p_1 = c z_\omega + a_\omega, \qquad \qquad c \in \CC.
\end{equation*}

\n To analyze the second equation, let $b_\omega$ be given by 

\begin{equation} \label{b_omega}
b_\omega(x) = \omega^2 \int_0^x \big[ z_{2\omega}(x) \widetilde{z}_{2\omega}(t) - z_{2\omega}(t) \widetilde{z}_{2\omega}(x) \big] \chi_2(t) \big( c z_\omega(t) + a_\omega(t) \big)^2 \, \frac{dt}{t}
\end{equation}

\n and for some constant $\widetilde{c} \in \CC$ let

\begin{equation*}
p_2 = \widetilde{c} z_{2\omega} - b_\omega.
\end{equation*}

\n Then $p_2$ satisfies the second equation in \eqref{F(p)=f odes} and the condition $p_2(0) = 0$. We therefore need to choose $c$ and $\widetilde{c}$ such that the remaining boundary conditions $p_2(1) = p_2'(1) = 0$ are satisfied, i.e., 

\begin{equation*}
\begin{cases}
\widetilde{c} \, z_{2\omega}(1) = b_\omega(1)
\\
\widetilde{c} \, z_{2\omega}'(1) = b_\omega'(1) ~.
\end{cases}
\end{equation*}

\n A necessary and sufficient condition for this to have a solution, $\widetilde{c}$,  is that 
\begin{equation*}
z_{2\omega}(1) b_\omega'(1) = z_{2\omega}'(1) b_\omega(1).    
\end{equation*}

\n Using basic algebra and the fact that the Wronskian of $z_{2\omega}, \widetilde{z}_{2\omega}$ is equal to $-1$, the last equation reduces to the following equation for $c$

\begin{equation} \label{c eq1}
\int_0^1 z_{2\omega} \big( c z_\omega + a_\omega \big)^2 \chi_2 \, \frac{dx}{x} = 0.
\end{equation}

\n If we want to invoke Lemma~\ref{LEM F' invertible} to insure the invertibility of  $F'_{\omega, \bm{p}}$, we must also require

\begin{equation} \label{c eq2}
\int_0^1 z_\omega z_{2\omega} \big( c z_\omega + a_\omega \big) \chi_2 \, \frac{dx}{x} \neq 0.
\end{equation}

\n The equations \eqref{c eq1} and \eqref{c eq2} can be rewritten as

\begin{equation} \label{c system}
\begin{cases}
k_1(\omega) c^2 + 2k_2(\omega) c + k_3(\omega) = 0
\\
k_1(\omega) c + k_2(\omega) \neq 0
\end{cases}
\end{equation}

\n It is easy to see that the system \eqref{c system} is solvable for $c \in \CC$ iff the discriminant of the quadratic is nonzero, i.e., $\zeta(\omega) \neq 0$. Let now

\begin{equation*}
\bm{p}_\omega = \big( c_\omega z_\omega + a_\omega, \, \widetilde{c}_\omega z_{2\omega} - b_\omega \big),
\end{equation*}

\n where the complex constants $c_\omega, \widetilde{c}_\omega$ are defined by

\begin{equation} \label{c and tilde c}
c_\omega = 
\begin{cases}
\displaystyle -\frac{k_3(\omega)}{2k_2(\omega)}, \qquad &\text{if} \ k_1(\omega) = 0
\\[.2in]
\displaystyle \frac{-k_2(\omega) + \sqrt{\zeta(\omega)}}{k_1(\omega)}, &\text{if} \ k_1(\omega) \neq 0
\end{cases}
\qquad \quad
\widetilde{c}_\omega =
\begin{cases}
\displaystyle \frac{b_\omega'(1)}{z_{2\omega}'(1)}, \qquad &\text{if} \ z_{2\omega}(1) = 0
\\[.2in]
\displaystyle \frac{b_\omega(1)}{z_{2\omega}(1)}, &\text{if} \ z_{2\omega}(1) \neq 0
\end{cases}
\end{equation}

\n Thus, for any $\omega \neq 0$ with $\zeta(\omega) \neq 0$ (which clearly is true for any $\omega \in \CG$) we have that $F(\bm{p}_\omega) = \bm{f}$ and $F'_{\omega, \bm{p}_{\omega}}$ is invertible. This completes the proof of part $(i)$. 

To prove part $(ii)$, we note that the coefficients $c_\omega, \widetilde{c}_\omega$ and hence also $\bm{p}_\omega$ may not be continuous for $\omega \in \CG$. To ensure continuity (in fact, holomorphicity) we will need to decompose $\CG$ into parts and on each part use a consistent and continuous ``branch" of $\bm{p}_\omega$. Since $\CG$ is bounded it contains at most finitely many (possibly no) zeros of the entire function $k_1(\omega)$ (we may also assume all of these zeros are in the interior of $\CG$, otherwise we may slightly enlarge $\CG$). Choose $\epsilon_j>0$ small enough that the disk $|\omega - \omega_j| \leq \epsilon_j$ is contained in $\CG$ and contains no other zeros of $k_1(\omega)$. Let $\omega_{1}^*, ..., \omega_{m}^*$ be the zeros (if any) in $\CG$ of the entire function $z_{2\omega}(1)$, again we may assume these zeros are all in the interior of $\CG$. For the sake of the argument, suppose these zeros do not coincide with any of $\omega_j$ (if there are coincidences a similar argument will apply). Since $z_{2\omega}'(1)$ cannot be zero at any $\omega_j^*$, we can choose $\epsilon_j^*>0$ small enough such that $z_{2\omega}'(1)$ is never zero on the disks $|\omega - \omega_j^*| \leq \epsilon_j^*$ and these disks are contained in $\CG$. Moreover, we may assume all these disks (centered at $\omega_j$ and $\omega_j^*$) are mutually disjoint. As $\CG$ is just a closed disk with finitely many open disks removed (cf. \eqref{G set}), it is clear that introducing cuts we can represent

\begin{equation} \label{G set long}
\CG = \bigcup_{j=1}^n \overline{B}_{\epsilon_j}(\omega_j) \, \cup \, \bigcup_{j=1}^m \overline{B}_{\epsilon_j^*}(\omega_j^*) \, \cup \, \bigcup_{j=1}^l \CD_j,
\end{equation}

\n where $\CD_j$ are closures of bounded, open and simply connected sets; moreover, on each of these sets $z_{2\omega}(1)$, $k_1(\omega)$ and $\zeta(\omega)$ are zero free. Consider such a set $\CD_j$. Using simply connectedness (and enlarging $\CD_j$ slightly to work with an open set) we can define a holomorphic square root for $\zeta(\omega)$, as this is a nonvanishing analytic function there. Thus, using the second formulas in \eqref{c and tilde c} we obtain holomorphic coefficients $c_\omega, \widetilde{c}_\omega$ for $\omega \in \CD_j$. Continuity of $\bm{p}_\omega$ then follows directly from Lemma~\ref{LEM z_omega properties}. On a set $\overline{B}_{\epsilon_j^*}(\omega_j^*)$ the functions $z_{2\omega}'(1)$ and $k_1(\omega)$ are zero free and the argument is analogous. Finally, on $\overline{B}_{\epsilon_j}(\omega_j)$ the function $z_{2\omega}(1)$ is zero free, hence to define $\widetilde{c}_\omega$ we use the second formula in \eqref{c and tilde c}; however, $k_1(\omega)$ has a unique zero on this set, at $\omega_j$. To make sure that $c_\omega$ defined by \eqref{c and tilde c} is continuous we choose the holomorphic branch of $\sqrt{\zeta(\omega)}$ such that $\sqrt{\zeta(\omega_j)} = k_2(\omega_j)$ so that appropriate cancellation happens, making sure that $c_\omega$ is continuous at $\omega_j$ (recall that $\zeta = k_2^2 - k_1 k_3$). It remains to denote the sets in the decomposition \eqref{G set long} by $\CG_j$.

Let us now turn to the proof of (\ref{F' inverse unif bound}).  For shorthand let us set $\Gamma_\omega = F'_{\omega, \bm{p}_\omega}$. Assume the estimate \eqref{F' inverse unif bound} is not true on $\CG_j$, then there exists a sequence of complex numbers $\{\omega_n\} \subset \CG_j$ such that, as $n \to \infty$ 

\begin{equation} \label{Gamma_j^-1 to infty}
\|\Gamma_{\omega_n}^{-1}\|_{\CL(X_\diamond)} \to \infty.
\end{equation}

\n As $\CG_j \subset \CC$ is compact, from $\{\omega_n\}$ we may extract a convergent subsequence, which we do not relabel: $\omega_n \to \omega_*$, for some $\omega_* \in \CG_j$. The definition of $\CG_j$ implies that $\omega_* \neq 0$ and $\zeta(\omega_*) \neq 0$. In particular, $\Gamma_{\omega_*}$ is an invertible operator on $X_\diamond$. Invoking part $(ii)$, we know that $\bm{p}_\omega$ is continuous for $\omega \in \CG_j$. But then it is easy to see that $\Gamma_{\omega_n} \to \Gamma_{\omega_*}$ in $\CL(X_\diamond)$. Using the invertibility of the limit $\Gamma_{\omega_*}$, it is a straightforward exercise to show that as $n\to \infty$

\begin{equation*}
\Gamma_{\omega_n}^{-1} \to \Gamma_{\omega_*}^{-1}.
\end{equation*}

\n This clearly contradicts \eqref{Gamma_j^-1 to infty}. 

\end{proof}

\subsection{Concluding the proof of Theorem \ref{THM exist} by the Banach fixed-point theorem}

Let $f \in L^2(0,1)$ with $\bm{f}=(0,f)$ and $\CG = \bigcup_{j=1}^s \CG_j$ as in \eqref{G set union G_j}. Fix $j$ and let $\omega \in \CG_j$ and $\bm{p}_\omega$ be as in Lemma~\ref{LEM F(p)=f}. Then from \eqref{u=H(u)} we see that the equation $T^\diamond(\bm{p}_\omega + \bm{u}) = \bm{f}$ can be rewritten as a fixed point equation $\bm{u} = H(\bm{u})$ where

\begin{equation} \label{H}
H(\bm{u}) = - (F'_{\omega, \bm{p}_\omega})^{-1} \left[ E(\bm{u}) + G(\bm{p}_\omega + \bm{u}) \right].
\end{equation}

\n We mention that $f$ appears implicitly in the above formulation, since $\bm{p}_\omega$ depends on $f$. Consider the difference

\begin{equation} \label{H(u)-H(v) eqn}
H(\bm{u}) - H(\bm{v}) = - (F'_{\omega, \bm{p}_\omega})^{-1} \left[ E(\bm{u}) - E(\bm{v}) + G(\bm{p}_\omega + \bm{u}) - G(\bm{p}_\omega + \bm{v}) \right].
\end{equation}

\n Assume $R > 0$ and $\bm{u}, \bm{v} \in \overline{B_R}(0) \subset X_\diamond$. Using the definitions of $E,G$ from \eqref{E} and \eqref{G} respectively, it is easy to see that for some absolute constant $C_0>0$ the following Lipschitz estimates hold:

\begin{equation} \label{2 Lip estimates}
\begin{split}
\|E(\bm{u}) - E(\bm{v})\|_{X_\diamond} &\leq C_0 R |\omega|^2 \|\chi_2\|_{L^\infty}  \|\bm{u} - \bm{v}\|_{X_\diamond} ,
\\[.07in]
\|G(\bm{p}_\omega + \bm{u}) - G(\bm{p}_\omega + \bm{v})\|_{X_\diamond} &\leq C_0 |\omega|^2 \|\chi_1\|_{L^\infty} \left(R + \|\bm{p}_\omega\|_{X_\diamond} \right) \|\bm{u} - \bm{v}\|_{X_\diamond}.    
\end{split}
\end{equation}

\n Item $(ii)$ of Lemma~\ref{LEM F(p)=f} implies that $\omega \mapsto \bm{p}_\omega$ is continuous as a map from $\CG_j \to X_\diamond$. In particular, $\|\bm{p}_\omega\|_{X_\diamond}$ reaches its maximum on $\CG_j$ and this maximum depends on $f, \chi_2, q$. Further, using \eqref{F' inverse unif bound} we can bound the operator norm of $(F'_{\omega, \bm{p}_\omega})^{-1}$ uniformly for $\omega \in \CG_j$. Thus, there exists $C = C(\CG, f, q, \chi_2)>0$ such that

\begin{equation} \label{H Lip}
\|H(\bm{u}) - H(\bm{v})\|_{X_\diamond} \leq C \left[ R \big(1+\|\chi_1\|_{L^\infty} \big) + \|\chi_1\|_{L^\infty} \right] \|\bm{u} - \bm{v}\|_{X_\diamond},
\end{equation}

\n for all $\bm{u}, \bm{v} \in \overline{B_R}(0)$ and $\omega \in \CG_j$. Further, we can also estimate

\begin{equation*}
\|H(0)\|_{X_\diamond} \leq \|(F'_{\omega, \bm{p}_\omega})^{-1}\|_{\CL(X_\diamond)} \, C_0 |\omega|^2 \|\chi_1\|_{L^\infty} \|\bm{p}_\omega\|_{X_\diamond}^2
\leq  C \|\chi_1\|_{L^\infty},  
\qquad \qquad \forall \ \omega \in \CG_j,
\end{equation*}

\n where we may assume $C$ is the same constant as the one in \eqref{H Lip}. Let us start by assuming

\begin{equation} \label{R equation}
R = \frac{1}{4C + 1}, \qquad \qquad  
\|\chi_1\|_{L^\infty} \leq \min \left\{ \frac{1}{4C},  \frac{1}{2C(4C+1)}\right\}.
\end{equation}

\n Note that we can impose the above assumption on $\chi_1$, as $C$ does not depend on $\chi_1$. This implies that the Lipschitz constant of $H$ on $\overline{B_R}(0)$ is at most $1/2$ and also

\begin{equation*}
\|H(0)\|_{X_\diamond} \leq \frac{1}{2(4C+1)} = \frac{R}{2}.
\end{equation*}

\n Now, for any $\bm{u} \in \overline{B_R}(0)$

\begin{equation*}
\|H(\bm{u})\|_{X_\diamond} \leq \|H(\bm{u}) - H(0)\|_{X_\diamond} + \|H(0)\|_{X_\diamond} \leq \frac{1}{2} \|\bm{u}\|_{X_\diamond} + \frac{R}{2} \leq R.
\end{equation*}

\n Thus, $H$ is a contraction on $\overline{B_R}(0) \subset X_\diamond$, and maps this closed ball into itself for each $\omega \in \CG_j$. Therefore, by the Banach fixed point theorem, the equation $\bm{u} = H(\bm{u})$ or equivalently $T^\diamond(\bm{p}_\omega + \bm{u}) = \bm{f}$ has a unique solution $\bm{u} \in \overline{B_R}(0)$ for each $\omega \in \CG_j$. Since this argument applies to all $j=1,...,s$, we obtain the existence for any $\omega \in \CG$.

\section*{Acknowledgments}
{The research of FC was partially supported  by the AFOSR Grant  FA9550-23-1-0256 and  NSF Grant DMS-21-06255.  ML was supported by the PDE-Inverse project of the European Research Council of the European Union and Academy of Finland grants 273979 and 284715. The research of MSV was partially supported by NSF Grant DMS-22-05912. Views and opinions expressed are those of the authors only and do not necessarily reflect those of the European Union or the other funding organizations.}

\appendix

\section{Appendix}
\setcounter{equation}{0} 

Here we present the proofs of Lemmas~\ref{LEM PN_omega}, \ref{LEM gamma_omega} and \ref{LEM v_omega - v_0}. We start with an auxiliary estimate for the solution of a second order differential equation in the presence of a small parameter. Recall that $Q$ is given by \eqref{q assumption} and define

\begin{equation*}
\Omega_Q = \left\{ \omega \in \CC: |\omega| \leq \frac{1}{2 \sqrt{Q}} \right\},
\end{equation*}

\begin{lemma} \label{LEM y}
Let $a,b \in \RR$ be fixed, $\omega \in \Omega_Q$ and $q$ satisfy \eqref{q assumption}. Let $z = z_\omega$ be the solution of the following initial value problem 

\begin{equation} \label{y}
\begin{cases}
z'' + \omega^2 q z = 0, \qquad \text{in} \ D = (0,1)
\\
z(0) = b, 
\\
z'(0) = a.
\end{cases}
\end{equation}

\n Then there exists a constant $C=C(a,b,Q)>0$, such that

\begin{equation} \label{y_omega ax+b expansion}
z_\omega(x) = ax + b + \omega^2 r_\omega(x),
\qquad \hbox{with }
\|r_\omega\|_{H^2} \leq C.
\end{equation}

\n Furthermore, given any  $\omega_0 \in \Omega_Q$

\begin{equation} \label{y_omega - y_omega0}
\|z_\omega - z_{\omega_0}\|_{H^2} \leq C |\omega - \omega_0|.
\end{equation}

\n In particular, $\omega \mapsto z_\omega$ is continuous as a mapping $\Omega_Q \to H^2$.

\end{lemma}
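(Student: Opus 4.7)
The plan is to convert the initial value problem into a Volterra integral equation and treat $\omega^2$ as a small perturbation parameter. Specifically, integrating the ODE twice against the initial conditions yields
\begin{equation*}
z_\omega(x) = ax + b - \omega^2 \int_0^x (x-t)\, q(t)\, z_\omega(t)\, dt,
\end{equation*}
so the candidate remainder is $r_\omega(x) = -\int_0^x (x-t)\, q(t)\, z_\omega(t)\, dt$. The first step will be to obtain a uniform $L^\infty$ bound on $z_\omega$ for $\omega \in \Omega_Q$. Taking absolute values in the integral equation and applying Gronwall's inequality gives
\begin{equation*}
\|z_\omega\|_{L^\infty} \leq (|a|+|b|)\, e^{|\omega|^2 Q} \leq (|a|+|b|)\, e^{1/4},
\end{equation*}
since $|\omega|^2 Q \leq 1/4$ on $\Omega_Q$.

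With the uniform bound on $z_\omega$ in hand, the identity $r_\omega'' = -q z_\omega$ together with the pointwise bound on $r_\omega$ immediately yields $\|r_\omega\|_{L^\infty} + \|r_\omega''\|_{L^\infty} \leq C(a,b,Q)$, and since the domain has finite length, this controls $\|r_\omega\|_{H^2}$ in the norm \eqref{H^2 norm}. This proves \eqref{y_omega ax+b expansion}.

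For \eqref{y_omega - y_omega0}, I will set $w = z_\omega - z_{\omega_0}$. Then $w$ satisfies
\begin{equation*}
w'' + \omega^2 q w = -(\omega^2 - \omega_0^2)\, q\, z_{\omega_0}, \qquad w(0) = w'(0) = 0,
\end{equation*}
which I rewrite as the Volterra equation
\begin{equation*}
w(x) = -\omega^2 \int_0^x (x-t)\, q(t)\, w(t)\, dt - (\omega^2 - \omega_0^2) \int_0^x (x-t)\, q(t)\, z_{\omega_0}(t)\, dt.
\end{equation*}
Using the already-established uniform bound on $\|z_{\omega_0}\|_{L^\infty}$ and applying Gronwall to the above gives $\|w\|_{L^\infty} \leq C\,|\omega^2 - \omega_0^2|$. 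Since $|\omega + \omega_0| \leq 1/\sqrt{Q}$ on $\Omega_Q$, we deduce $\|w\|_{L^\infty} \leq C\,|\omega - \omega_0|$. Feeding this back into the ODE for $w$ controls $\|w''\|_{L^\infty}$ by the same order, completing the $H^2$ estimate and, as a corollary, the continuity statement.

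The routine obstacle is bookkeeping the dependence of the constant $C$ on $a,b,Q$; there is no conceptual difficulty, since on the bounded domain $(0,1)$ every $L^\infty$ bound dominates the corresponding $L^2$ bound, and the ODE reduces $H^2$ estimates on the solution to $L^\infty$ estimates on $z_\omega$ and on the inhomogeneity.
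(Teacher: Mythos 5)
Your proof is correct and follows essentially the same route as the paper: rewrite the IVP as a Volterra integral equation, extract a uniform $L^\infty$ bound for $z_\omega$ on $\Omega_Q$, read off $r_\omega'' = -qz_\omega$ to control $\|r_\omega\|_{H^2}$, and repeat for the difference $z_\omega - z_{\omega_0}$. The only cosmetic difference is that the paper absorbs the integral term directly (using $\tfrac{1}{2}|\omega|^2 Q \le \tfrac{1}{8}$ and moving $\|z_\omega\|_{L^\infty}$ to the left), whereas you invoke Gronwall — both are valid and give constants of the same quality.
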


\begin{proof}

Clearly, $z_\omega$ satisfies the following integral equation
    
\begin{equation} \label{y inteq}
z_\omega(x) = ax+b + \omega^2 \int_0^x (t-x) q(t) z_\omega(t) dt.
\end{equation}

\n This implies the estimate

\begin{equation*}
\|z_\omega\|_{L^\infty} \leq |a| + |b| + \frac12 |\omega|^2 \|q\|_{L^\infty} \|z_\omega\|_{L^\infty} \leq |a| + |b| + \frac{1}{8} \|z_\omega\|_{L^\infty},
\end{equation*}

\n where in the last step we used the hypothesis on $\omega$. Consequently, $\|z_\omega\|_{L^\infty} \leq \frac{8}{7} \left( |a| + |b| \right)$. But \eqref{y inteq} shows that

\begin{equation} \label{y tilde integral}
r_\omega (x) = \int_0^x (t-x) q(t) z_\omega(t) dt.
\end{equation}

\n In particular $r_\omega'' = - q z_\omega$,and  hence the $H^2$-norm of $r_\omega$ can be controlled by the $L^\infty$-norm of $z_\omega$. This verifies \eqref{y_omega ax+b expansion}. To prove the estimate \eqref{y_omega - y_omega0}, we write

\begin{equation*}
z_\omega(x) - z_{\omega_0}(x) = \left( \omega^2 - \omega_0^2 \right) \int_0^x (t-x) q(t) z_\omega(t) dt + \omega_0^2 \int_0^x (t-x) q(t) \left( z_\omega(t) - z_{\omega_0}(t) \right) dt.
\end{equation*}

\n Analogously to before we use this identity to obtain an $L^\infty$ bound for $z_\omega -z_{\omega_0}$, and subsequently an $H^2$ bound, both of the order $|\omega -\omega_0|$.

\end{proof}

\subsection{Proof of Lemma~\ref{LEM PN_omega}}

Throughout the proof we assume that $\omega \in \Omega_Q$. If $\{\bm{u}^1_\omega, \bm{u}^2_\omega\}$ is an $X$-orthonormal basis for $N_\omega$, then the orthogonal projection operator is given by the following formula:

\begin{equation*}
P_{N_\omega} \bm{u} = \sum_{j=1}^2 \langle \bm{u}, \bm{u}^j_\omega \rangle_X \ \bm{u}^j_\omega, \qquad \qquad \forall \ \bm{u} \in X.
\end{equation*}

\n For any $\omega, \omega_0 \in \CC$, we can now write

\begin{equation*}
\left(P_{N_{\omega}} - P_{N_{\omega_0}} \right) \bm{u} = \sum_{j=1}^2 \langle \bm{u},  \bm{u}^j_{\omega} - \bm{u}^j_{\omega_0} \rangle_X \ \bm{u}^j_{\omega} + \langle \bm{u}, \bm{u}^j_{\omega_0} \rangle_X \ \left( \bm{u}^j_{\omega} - \bm{u}^j_{\omega_0} \right),
\end{equation*}

\n which readily implies the bound

\begin{equation*}
\left\| P_{N_{\omega}} - P_{N_{\omega_0}} \right\|_{\CL(X)} \leq 2
\sum_{j=1}^2 \left\| \bm{u}^j_{\omega} - \bm{u}^j_{\omega_0} \right\|_X. 
\end{equation*}

\n Therefore, to establish the first part of Lemma~\ref{LEM PN_omega} it suffices to construct a family of orthonormal bases $\{\bm{u}^1_\omega, \bm{u}^2_\omega\}$ for which the mappings $\omega \mapsto \bm{u}^j_\omega$, $j=1,2$ are continuous $\Omega_Q \to X$. Recall that $X = H^2 \times L^2$. Let $z_\omega, \widetilde{z}_\omega$ be given by \eqref{z_omega}, and define

\begin{equation*}
\bm{u}^1_\omega = (u_\omega, 0) ,
\qquad \qquad u_\omega = \frac{\widetilde{z}_\omega}{\|\widetilde{z}_\omega\|_{H^2}}
\end{equation*}

\n and

\begin{equation*}
\bm{u}^2_\omega(x) = (v_\omega, 0)
\qquad \qquad
v_\omega(x) = \frac{f_\omega(x)}{\|f_\omega\|_{H^2}},
\end{equation*}

\n where

\begin{equation*}
f_\omega(x) = z_\omega(x) - \langle z_\omega, u_\omega  \rangle_{H^2} \ u_\omega(x).
\end{equation*}

\n The continuity of $\omega \mapsto \bm{u}^j_\omega$ now follows from the continuity of $z_\omega, \ \widetilde{z}_\omega$, with respect to $\omega$, as shown in Lemma~\ref{LEM y} (and the fact that $\Vert \tilde z_\omega \Vert_{H^2}$ and $\Vert f_\omega \Vert_{H^2}$ stay bounded away from zero). This completes the proof of the first part of Lemma~\ref{LEM PN_omega}. 

Let us now turn to the proof of inequality \eqref{P_omega-P0 bound omega^2}, which is concerned with an estimate for small $\omega$. Lemma~\ref{LEM y} applied to $\widetilde{z}_\omega$ implies that

\begin{equation*}
\widetilde{z}_\omega = 1 + \omega^2 \widetilde{r}_\omega,
\qquad \hbox{with }
\|\widetilde{r}_\omega\|_{H^2} \leq C,
\end{equation*}

\n where the constant $C$ depends only on $Q$. This clearly shows that

\begin{equation*}
\|\widetilde{z}_\omega\|_{H^2} = 1 + O(\omega^2).
\end{equation*}

\n Combining the last two identities we also conclude that, there exist $\delta, C > 0$ (depending only on $Q$), such that for all $|\omega| < \delta$  

\begin{equation*}
u_\omega = 1 + \omega^2 \widetilde{u}_\omega,
\qquad \hbox{with }
\|\widetilde{u}_\omega\|_{H^2} \leq C,
\end{equation*}

\n and therefore

\begin{equation*}
\left\| \bm{u}^1_{\omega} - \bm{u}^1_{0} \right\|_X = \|u_\omega - 1\|_{H^2} \leq C |\omega|^2, \qquad \qquad \forall \ |\omega| < \delta.
\end{equation*}

\n Similarly, Lemma~\ref{LEM y} applied to $z_\omega$ implies that

\begin{equation*}
z_\omega = x + \omega^2 r_\omega,
\qquad \hbox{with }
\|r_\omega\|_{H^2} \leq C.
\end{equation*}

\n Using the above representation, the fact that $D = (0,1)$ and integrating out the $H^2$-inner product we find that, for $|\omega| < \delta$

\begin{equation*}
f_\omega(x) = x - \langle x, 1 \rangle_{H^2} + \omega^2 \widetilde{f}_\omega = x - \frac{1}{2} + \omega^2 \widetilde{f}_\omega,
\qquad \hbox{with }
\|\widetilde{f}_{\omega}\|_{H^2} \leq C,
\end{equation*}

\n where $C$ is a possibly different constant, which depends only on $Q$. Consequently, we have

\begin{equation*}
\|f_\omega\|_{H^2}  = \|x-\tfrac{1}{2}\|_{H^2} + O(\omega^2).
\end{equation*}

\n But then, as $\omega \to 0$

\begin{equation*}
\left\| \bm{u}^2_{\omega} - \bm{u}^2_{0} \right\|_X = \left\| v_\omega - \frac{x-\frac{1}{2}}{\|x-\tfrac{1}{2}\|_{H^2}} \right\|_{H^2}
= O(\omega^2).
\end{equation*}

\subsection{Proof of Lemma~\ref{LEM gamma_omega}}

\n By definition

\begin{equation*}
\begin{split}
\gamma_\omega &= \sup \left\{ \frac{\|L_\omega^{-1} \bm{v}\|_X}{\|\bm{v}\|_X}: \ \bm{v} \in R_\omega \backslash \{0\} \right\}
=
\sup \left\{ \frac{\|\bm{u}\|_X}{\|L_\omega \bm{u}\|_X}: \ \bm{u} \in N_\omega^\perp \backslash \{0\} \right\} =
\\[.1in]
&=\sup \left\{ \frac{1}{\|L_\omega \bm{u}\|_X}: \ \bm{u} \in N_\omega^\perp, \quad \|\bm{u}\|_X = 1 \right\},
\end{split}
\end{equation*}

\n hence

\begin{equation*}
\frac{1}{\gamma_\omega} = \inf \left\{ \|L_\omega \bm{u}\|_X: \ \bm{u} \in N_\omega^\perp, \quad \|\bm{u}\|_X = 1 \right\}. 
\end{equation*}

\n It is convenient to have a representation where $\bm{u}$ varies over $X$. To that end simply note that we also have

\begin{equation}
\label{gamma^-1}
\gamma_\omega^{-1} = \inf \left\{ \|L_\omega \bm{u}\|_X: \ \bm{u} \in X, \quad \|P_{N_\omega^\perp} \bm{u}\|_X=\| \bm{u} \|_X = 1 \right\} .
\end{equation}

\n Since $\gamma_\omega > 0$ for any $\omega \in \Omega_Q$ (recall that $L_\omega$ is a Fredholm operator according to Theorem~\ref{THM L}), we may equivalently show the continuity of $\gamma_\omega^{-1}$. Fix $\omega_0 \in \Omega_Q$ and let us prove the continuity at this point. Let $\omega \in \CC$ be close enough to $\omega_0$ so that

\begin{equation} \label{PN_omega^perp - P_N0^perp}
a_\omega:=\left\| P_{N_{\omega}^\perp} - P_{N_{\omega_0}^\perp} \right\|_{\CL(X)} \leq \frac{1}{2}.
\end{equation}

\n This is possible due to Lemma~\ref{LEM PN_omega} and the fact that $P_{N_{\omega}^\perp} = Id - P_{N_{\omega}}$. Take now any $\bm{u}$ that is admissible in \eqref{gamma^-1}, then

\begin{equation*}
1 = \|P_{N_\omega^\perp} \bm{u}\|_X \leq \|P_{N_{\omega_0}^\perp} \bm{u}\|_X + a_\omega.
\end{equation*} 

\n Therefore,

\begin{equation} \label{PN_0^perp u lower}
\|P_{N_{\omega_0}^\perp} \bm{u}\|_X \geq 1 - a_\omega.
\end{equation}

\n Because of \eqref{PN_omega^perp - P_N0^perp} the above norm is at least $1/2$, in particular $P_{N_{\omega_0}^\perp} \bm{u} \neq 0$. We set

\begin{equation*}
b_\omega:= \|L_\omega  - L_{\omega_0}  \|_{\CL(X)}. 
\end{equation*}

\n Now using the triangle inequality we can estimate

\begin{equation} \label{L_omega u lower bound}
\|L_\omega \bm{u}\|_X \geq \|L_{\omega_0} \bm{u}\|_X - b_\omega  = \|P_{N_{\omega_0}^\perp} \bm{u}\|_X \ \|L_{\omega_0} \bm{v}\|_X - b_\omega,     
\end{equation} 

\n where 

\begin{equation*}
\bm{v} = \frac{P_{N_{\omega_0}^\perp} \bm{u}}{\|P_{N_{\omega_0}^\perp} \bm{u}\|_X}.
\end{equation*}

\n Observe that $\|\bm{v}\|_X=\|P_{N_{\omega_0}^\perp} \bm{v}\|_X = 1$. In particular $\bm{v}$ is an admissible element in the infumum defining $\gamma_{\omega_0}^{-1}$, so that

\begin{equation*}
\|L_{\omega_0} \bm{v}\|_X \geq \gamma_{\omega_0}^{-1} .
\end{equation*}

\n Using this inequality, along with \eqref{PN_0^perp u lower} in \eqref{L_omega u lower bound} we obtain  

\begin{equation*}
\|L_\omega \bm{u}\|_X \geq \left( 1 - a_\omega \right) \gamma_{\omega_0}^{-1}  - b_\omega. 
\end{equation*}

\n Now we can take infimum over $\bm{u}$, and the left hand side will yield $\gamma_\omega^{-1}$. Hence

\begin{equation} \label{gammas 1}
\gamma_{\omega}^{-1} - \gamma_{\omega_0}^{-1} \geq - a_\omega \gamma_{\omega_0}^{-1}  - b_\omega. 
\end{equation}

\n Changing the roles of $\omega$ and $\omega_0$ and repeating the above argument we get

\begin{equation} \label{gammas 2}
\gamma_{\omega_0}^{-1} - \gamma_{\omega}^{-1} \geq - a_\omega \gamma_{\omega}^{-1}  - b_\omega. 
\end{equation}

\n As $a_\omega \leq 1/2$, the last inequality can be used to get an upper bound for $\gamma_\omega^{-1}$, namely

\begin{equation} \label{gamma_omega^-1 upper}
\frac{1}{2} \gamma_\omega^{-1} \leq (1-a_\omega) \gamma_\omega^{-1} \leq \gamma_{\omega_0}^{-1} + b_\omega.  
\end{equation}

\n Combining the estimates \eqref{gammas 1}, \eqref{gammas 2} and \eqref{gamma_omega^-1 upper} we obtain

\begin{equation*}
- a_\omega \gamma_{\omega_0}^{-1}  - b_\omega  \leq \gamma_{\omega}^{-1} - \gamma_{\omega_0}^{-1} \leq 2 a_\omega \left( \gamma_{\omega_0}^{-1} + b_\omega  \right) + b_\omega .
\end{equation*}

\n To complete the proof we use the fact that $a_\omega, b_\omega \to 0$ as $\omega \to \omega_0$. The convergence of $a_\omega$ follows from Lemma~\ref{LEM PN_omega}, whereas the convergence of $b_\omega$ follows directly from the definition (\ref{L}).

\subsection{Proof of Lemma~\ref{LEM v_omega - v_0}}

Consider the first inequality of Lemma~\ref{LEM v_omega - v_0}. To simplify notation we suppress the $\bm{\alpha}$ dependence and write $\bm{v}_\omega = \bm{v}_{\omega, \bm{\alpha}}$ (recall that $\bm{v}_\omega$ is defined in \eqref{v_omega basis expansion}). Since the second component of $\bm{v}_{\omega}$ is 0, we have

\begin{equation*}
\bm{v}_{\omega} =(v_\omega, 0), \qquad \qquad v_\omega(x) = \alpha_1 z_\omega(x) + \alpha_2 \widetilde{z}_\omega(x),
\end{equation*} 

\n where $z_\omega, \widetilde{z}_\omega$ are given by \eqref{z_omega}. Observe that $v_0$ is a linear function, hence

\begin{equation}
\label{secondder}
\left( v_{\omega} - v_0 \right)'' = v_{\omega}'' = - \omega^2 q v_{\omega} .
\end{equation}
Also, observe that $(v_\omega-v_0)(0)=(v_\omega-v_0)'(0)=0$, and therefore
$$
(v_\omega-v_0)(x)= \int_0^x (v_\omega-v_0)''(t)(x-t)dt~,
$$
from which it immediately follows that
$$
\int_0^1|v_\omega-v_0|^2 dx \le \frac1{12} \int_0^1|(v_\omega-v_0)''|^2 dx~. 
$$
In particular
$$
\Vert \bm{v}_\omega -\bm{v}_0\Vert_X^2 = \Vert v_\omega-v_0 \Vert^2_{L^2}+ \Vert (v_\omega-v_0)'' \Vert^2_{L^2} \le \frac{13}{12} \Vert (v_\omega-v_0)'' \Vert_{L^2}^2~.
$$
In combination with (\ref{secondder}) this now yields
$$
\Vert \bm{v}_\omega -\bm{v}_0\Vert_X \le \sqrt{\frac{13}{12}} Q |\omega|^2 \Vert v_\omega \Vert_{L^2}  \le \sqrt{\frac{13}{12}} Q |\omega|^2 \Vert \bm{v}_\omega \Vert_X~.
$$
The second inequality of Lemma~\ref{LEM v_omega - v_0} is an immediate consequence of the facts that
$\Vert \bm{v}_0 \Vert_X$ is bounded from below by  $c|\bm{\alpha}|$ and that $\Vert \bm{v}_\omega \Vert_X$ is bounded from above by $C|\bm{\alpha}|$ uniformly for $\omega \in \Omega_Q$. We leave the details to the reader.

\bibliographystyle{plain}
\bibliography{nonlinear}

\begin{thebibliography}{10}

\bibitem{non2}
Sebastian Acosta, Gunther Uhlmann, and Jian Zhai.
\newblock Nonlinear ultrasound imaging modeled by a {W}estervelt equation.
\newblock {\em SIAM J. Appl. Math.}, 82(2):408--426, 2022.

\bibitem{kerr3}
Robert Adair, L.L. Chase, and Stephen~A. Payne.
\newblock Nonlinear refractive index of optical crystals.
\newblock {\em Phys. Rev. B}, 39:3337, 1989.

\bibitem{AZ21}
Yernat~M. Assylbekov and Ting Zhou.
\newblock Inverse problems for nonlinear {M}axwell's equations with second
  harmonic generation.
\newblock {\em J. Differential Equations}, 296:148--169, 2021.

\bibitem{plamen}
Ant\^{o}nio~S\'{a} Barreto and Plamen Stefanov.
\newblock Recovery of a cubic non-linearity in the wave equation in the weakly
  non-linear regime.
\newblock {\em Comm. Math. Phys.}, 392(1):25--53, 2022.

\bibitem{blas2}
Eemeli Bl{\aa}sten, Lassi P\"{a}iv\"{a}rinta, and John Sylvester.
\newblock Corners always scatter.
\newblock {\em Comm. Math. Phys.}, 331(2):725--753, 2014.

\bibitem{blas}
Emilia L.~K. Bl{\aa}sten and Hongyu Liu.
\newblock Scattering by curvatures, radiationless sources, transmission
  eigenfunctions, and inverse scattering problems.
\newblock {\em SIAM J. Math. Anal.}, 53(4):3801--3837, 2021.

\bibitem{boyd}
Robert~W. Boyd.
\newblock {\em Nonlinear optics}.
\newblock Elsevier/Academic Press, Amsterdam, third edition, 2008.

\bibitem{CBMS2}
Fioralba Cakoni, David Colton, and Houssem Haddar.
\newblock {\em Inverse Scattering Theory and Transmission Eigenvalues},
  volume~88 of {\em CBMS-NSF Regional Conference Series in Applied
  Mathematics}.
\newblock Society for Industrial and Applied Mathematics (SIAM), Philadelphia,
  PA, 2016, Second edition vol. 98, 2023.

\bibitem{nonscattering}
Fioralba Cakoni and Michael~S. Vogelius.
\newblock Singularities almost always scatter: regularity results for
  non-scattering inhomogeneities.
\newblock {\em Comm. on Pure and Appl. Math.},
  (https://doi.org/10.1002/cpa.22117), 2023.

\bibitem{nonscattering-A}
Fioralba Cakoni, Michael~S. Vogelius, and Jingni Xiao.
\newblock On the regularity of non-scattering anisotropic inhomogeneities.
\newblock {\em Arch. Ration. Mech. Anal.}, 247(3):Paper No. 31, 2023.

\bibitem{cak-xiao}
Fioralba Cakoni and Jingni Xiao.
\newblock On corner scattering for operators of divergence form and
  applications to inverse scattering.
\newblock {\em Comm. Partial Differential Equations}, 46(3):413--441, 2021.

\bibitem{DeFilippis_2023}
Nicholas DeFilippis, Shari Moskow, and John~C. Schotland.
\newblock Born and inverse born series for scattering problems with kerr
  nonlinearities.
\newblock {\em Inverse Problems}, 39(12):125015, nov 2023.

\bibitem{Hu}
Johannes Elschner and Guanghui Hu.
\newblock Acoustic scattering from corners, edges and circular cones.
\newblock {\em Arch. Ration. Mech. Anal.}, 228(2):653--690, 2018.

\bibitem{non3}
Ali Feizmohammadi, Matti Lassas, and Lauri Oksanen.
\newblock Inverse problems for nonlinear hyperbolic equations with disjoint
  sources and receivers.
\newblock {\em Forum Math., Pi}, 9:Paper No. e10, 1--52, 2021.

\bibitem{gries}
Roland Griesmaier, Marvin Kn\"{o}ller, and Rainer Mandel.
\newblock Inverse medium scattering for a nonlinear {H}elmholtz equation.
\newblock {\em J. Math. Anal. Appl.}, 515(1):Paper No. 126356, 27, 2022.

\bibitem{non1}
Victor Isakov and Adrian~I. Nachman.
\newblock Global uniqueness for a two-dimensional semilinear elliptic inverse
  problem.
\newblock {\em Trans. Amer. Math. Soc.}, 347(9):3375--3390, 1995.

\bibitem{kato95}
Tosio Kato.
\newblock {\em Perturbation theory for linear operators}.
\newblock Classics in Mathematics. Springer-Verlag, Berlin, 1995.
\newblock Reprint of the 1980 edition.

\bibitem{KurLasUhl}
Yaroslav Kurylev, Matti Lassas, and Gunther Uhlmann.
\newblock Inverse problems for {L}orentzian manifolds and non-linear hyperbolic
  equations.
\newblock {\em Inventiones Mathematicae}, 212:781--857, 2018.

\bibitem{mol}
Jerome Moloney and Alan Newell.
\newblock {\em Nonlinear optics}.
\newblock Westview Press. Advanced Book Program, Boulder, CO, 2004.

\bibitem{olver74}
Frank W.~J. Olver.
\newblock {\em Asymptotics and special functions}.
\newblock Computer Science and Applied Mathematics. Academic Press [Harcourt
  Brace Jovanovich, Publishers], New York-London, 1974.

\bibitem{RS23}
Kui Ren and Nathan Soedjak.
\newblock Recovering coefficients in a system of semilinear helmholtz equations
  from internal datan.
\newblock {\em preprint, arxiv 2307.01385}, 2023.

\bibitem{nonscattering-SS}
Mikko Salo and Henrik Shahgholian.
\newblock Free boundary methods and non-scattering phenomena.
\newblock {\em Res. Math. Sci.}, 8(4):Paper No. 58, 19, 2021.

\bibitem{zeidler_NFA_I}
Eberhard Zeidler.
\newblock {\em Nonlinear functional analysis and its applications. {I}}.
\newblock Springer-Verlag, New York, 1986.
\newblock Fixed-point theorems, Translated from the German by Peter R. Wadsack.

\end{thebibliography}
\end{document}